\theoremstyle{plain}
\newtheorem{thm}{Theorem}[section]
\newtheorem{lem}[thm]{Lemma}
\newtheorem{claim}[thm]{Claim}
\numberwithin{equation}{section}
\theoremstyle{definition}
\newtheorem{defn}[thm]{Definition}
\theoremstyle{remark}
\newtheorem*{remark}{Remark}
\newtheorem*{acknowledgements}{Acknowledgements}
\newcommand{\what}{\widehat}
\newcommand{\wtilde}{\widetilde}
\newcommand{\RR}{\mathbb{R}}
\newcommand{\HH}{\mathbb{H}}
\newcommand{\CC}{\mathbb{C}}
\newcommand{\CHAT}{\what{\mathbb{C}}}
\newcommand{\g}{\mathbf{g}} 
\newcommand{\p}{\mathbf{p}} 
\newcommand{\geod}{\mathcal{G}} 
\newcommand{\eps}{\varepsilon}
\newcommand{\scc}{\mathcal{C}_0} 
\newcommand{\mc}{\mathcal{C}} 
\newcommand{\neigh}{\mathcal{N}} 
\DeclareMathOperator{\re}{Re}
\DeclareMathOperator{\teich}{\mathcal{T}}
\DeclareMathOperator{\el}{EL} 
\DeclareMathOperator{\psl}{PSL}
\DeclareMathOperator{\cat}{CAT}
\DeclareMathOperator{\gl}{GL}
\DeclareMathOperator{\flt}{\mathcal{QD}} 
\DeclareMathOperator{\area}{area}
\DeclareMathOperator{\Dil}{Dil}
\begin{document}

\title{Non-convex balls in the Teichm\"uller metric}

\author{Maxime Fortier Bourque}
\address{Department of Mathematics, University of Toronto, 40 St. George Street, Toronto, ON, Canada M5S 2E4}
\email{mbourque@math.toronto.edu}

\author{Kasra Rafi}
\address{Department of Mathematics, University of Toronto, 40 St. George Street, Toronto, ON, Canada M5S 2E4}
\email{rafi@math.toronto.edu}

\keywords{}

\begin{abstract}
We prove that the Teichm\"uller space of surfaces of genus $\g$ with $\p$ punctures contains balls which are not convex in the Teich\-m\"ul\-ler metric whenever its complex dimension $(3\g-3+\p)$ is greater than $1$.
\end{abstract}

\maketitle

\section{Introduction}

Let $\overline S$ be a closed oriented surface and $P\subset \overline S$ a finite set. The Teich\-m\"uller space of $S=\overline S \setminus P$ is the set of conformal structures on $\overline S$ up to biholomorphisms homotopic to the identity rel $P$. The Teichm\"uller metric on this space $\teich(S)$ measures how much diffeomorphisms of $\overline S$ homotopic to the identity rel $P$ must distort angles with respect to different conformal structures. This metric is complete, uniquely geodesic, Finsler, and agrees with the Kobayashi metric on $\teich(S)$. However, its local geometry is quite subtle. Indeed, we prove that:

\begin{thm} \label{thm:mainthm}
There exist non-convex balls in $\teich(S)$ whenever its complex dimension is greater than $1$. 
\end{thm}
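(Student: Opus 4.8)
\emph{Proof strategy.} Since the Teichm\"uller metric is uniquely geodesic, it suffices to exhibit points $X,Y,Z$ and a point $W$ on the Teichm\"uller geodesic $[Y,Z]$ with
\[
d_\teich(X,W)\;>\;\max\bigl(d_\teich(X,Y),\,d_\teich(X,Z)\bigr),
\]
for then, choosing $r$ strictly between these two quantities, the ball $B(X,r)$ contains $Y$ and $Z$ but not $W$, so it is not convex. Equivalently, we want a Teichm\"uller geodesic segment along which $d_\teich(X,\cdot)$ has an interior maximum exceeding both of its endpoint values. I would first construct such a configuration in a model of complex dimension two and then transplant it into $\teich(S)$ for every surface $S$ with $3\g-3+\p>1$.

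\emph{The model configuration.} Take a surface of complexity two (a five--punctured sphere or a twice--punctured torus), two simple closed curves $\alpha_1,\alpha_2$ on it that intersect essentially, and a conformal automorphism $\sigma$ of a sufficiently symmetric conformal structure $X$ with $\sigma(\alpha_1)=\alpha_2$. Set $Y=T_{\alpha_1}^{N}X$ and $Z=T_{\alpha_2}^{N}X$, where $T_{\alpha_i}$ is the Dehn twist about $\alpha_i$ and $N$ is large. Dehn twists act on $\teich(S)$ by isometries, and a standard extremal--length estimate gives $d_\teich(X,Y)=d_\teich(X,Z)=\log N+O(1)$. Since $\sigma$ is realized by a mapping class and interchanges $Y$ and $Z$, it reverses the geodesic $[Y,Z]$ and fixes its midpoint $W$; hence $W$ lies in the totally geodesic locus $\mathrm{Fix}(\sigma)$, which is the (lower--dimensional) Teichm\"uller space of the quotient orbifold, and along which $W$ can be located. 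To control $[Y,Z]$ itself I would use the description of Teichm\"uller geodesics through their shadows in the curve complex: along $[Y,Z]$ the curve $\alpha_1$ is lengthened while $\alpha_2$ is shortened, the geodesic fellow--travelling a path dictated by a curve--complex geodesic from $\alpha_1$ to $\alpha_2$; choosing $\alpha_1,\alpha_2$ (equivalently the orbifold quotient) so that this forces $W$ into a configuration where some curve, or some subsurface coefficient, is far from its value at $X$, one then extracts a lower bound for $d_\teich(X,W)$ from Kerckhoff's formula $d_\teich(X,Y)=\tfrac12\log\sup_{\mu}\el_Y(\mu)/\el_X(\mu)$ and compares it with the bound $\log N+O(1)$ for the distances to the endpoints.

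\emph{The main obstacle and the propagation.} The crux is quantitative: extremal length is quasi--convex along Teichm\"uller geodesics only up to a multiplicative constant, balls of small enough radius are convex because the metric is Finsler, and Minsky's product--region theorem is accurate only up to a universal additive error; so the interior peak of $d_\teich(X,\cdot)$ must be produced with a definite gap that survives all of these errors, and this is the step I expect to be hard. I would address it by pinning the geodesic as tightly as the symmetries allow — intersecting several fixed loci so that the relevant portion of $[Y,Z]$, and the extremal lengths entering Kerckhoff's formula, reduce to an explicit one-- or two--parameter family of flat surfaces on which the strict inequality $d_\teich(X,W)>\max(d_\teich(X,Y),d_\teich(X,Z))$ can be checked outright, with a gap larger than twice the product--region constant. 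Given such a two--dimensional example, it transplants to every $S$ with $3\g-3+\p>1$: pick a pants decomposition of $S$ in which $\alpha_1\cup\alpha_2$ lies in a complexity--two subsurface, push $X,Y,Z$ (hence $W$) deep into the thin part of the remaining curves while keeping those curves and their twists fixed, and invoke the product--region theorem to conclude that $[Y,Z]$ stays in the product region, shadows the model geodesic, and that the extra coordinates contribute nothing to the distances to $X$. The two--dimensional peak then persists, so the corresponding ball in $\teich(S)$ is not convex.
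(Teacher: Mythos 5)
Your reduction is fine: in a uniquely geodesic space, exhibiting $X$, collinear $Y,W,Z$ with $d(X,W)>\max\bigl(d(X,Y),d(X,Z)\bigr)$ does give a non-convex ball, and this is parallel in spirit to the paper's criterion (which instead works with sublevel sets of extremal length, shown to be nested unions of balls). But the heart of your argument is missing, and the tools you invoke cannot supply it. The symmetric twist configuration $Y=T_{\alpha_1}^N X$, $Z=T_{\alpha_2}^N X$ with midpoint $W$ in the fixed locus of $\sigma$ gives no inequality at all: you never argue that $d(X,W)$ exceeds $d(X,Y)$, only that one might hope to check it. Moreover, by the Lenzhen--Rafi quasi-convexity of balls (cited in the paper), any excess of $d(X,W)$ over the endpoint distances is bounded by a fixed additive constant $c(S)$, while every estimate you propose to use to detect it --- curve-complex shadows, Rafi's coarse description of geodesics, Kerckhoff's formula fed with coarse extremal-length estimates, and Minsky's product-region theorem --- carries additive errors of exactly this order. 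So the strict inequality with ``a gap larger than twice the product-region constant'' is not something these coarse methods can certify, and there is no a priori reason such a large gap exists; the quantitative step you flag as hard is precisely the content of the theorem, and it is left unproven. The same problem infects your transplant step: holding the extra coordinates fixed and pushing into the thin part only gives the model distances up to the product-region additive error, which again may swamp the (at most $O(1)$) peak.

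For contrast, the paper avoids coarse estimates entirely. It first converts the problem into finding a multicurve whose extremal length increases and later decreases along a single Teichm\"uller line (via horoballs, Lemma \ref{lem:criterion}), which requires controlling one explicit function rather than the distance function itself. It then builds explicit half-translation surfaces $\Phi_a$ (doubled staircase polygons, or versions with handles) and proves the needed four inequalities exactly in a degenerate limit: as $a\to 0$ the surfaces converge conformally to a disjoint union, extremal length is additive over components (Lemma \ref{lem:extadditive}) and converges under this degeneration (Theorem \ref{thm:elconv}), so elementary length-area bounds ($2/e^2$ versus $2/3$) on the limit pieces settle the inequalities, and they persist for small $a>0$ with no error constants to beat. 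Higher complexity is handled by gluing extra handles/punctures in a corner far from the relevant subsurfaces, again using the exact limiting argument rather than product regions. Finally, the low-complexity cases ($S_{0,5}$, etc.) --- where your model lives --- resisted even the authors' soft methods and required certified numerical computation of Schwarz--Christoffel integrals; this is strong evidence that the ``check outright on an explicit family'' step in your plan is not a routine verification but the essential difficulty. As written, your proposal is a plausible research strategy, not a proof.
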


Note that for any $X \in \teich(S)$, the balls of sufficiently small radius centered at $X$ are convex. This is true in any Finsler manifold \cite{Whitehead} \cite{Traber}. 


\subsection*{Motivation}

If $S$ is a once-punctured torus or a four-times-punctured sphere, then $\teich(S)$ is isometric to $\HH^2$, the hyperbolic plane with constant curvature $-4$.  This led Kravetz to argue that in general, $\teich(S)$ is non-positively curved in the sense of Busemann \cite{Kravetz}. However, Linch \cite{Linch} found a flaw in Kravetz's reasoning and soon after, Masur \cite{Masur} showed that the result was false: there exist distinct geodesic rays starting from the same point in $\teich(S)$ and staying a bounded distance apart whenever $\dim_\CC \teich(S) > 1$. In particular, Teichm\"uller space is not $\cat(0)$ nor $\delta$--hyperbolic. 

In any proper geodesic metric space $\mathbb{X}$, we have the implications
\begin{align*}
& \mathbb{X} \text{ is non-positively curved in the sense of Busemann} \\
\Rightarrow \quad &  \text{the distance to any point is strictly convex along any} \\
& \text{geodesic not containing that point}\\
\Rightarrow \quad &  \text{closed balls are strictly convex} \\
\Rightarrow  \quad &  \text{the convex hull of any finite set is compact.} 
\end{align*}

The question of whether the third statement held for Teichm\"uller space was originally motivated by the Nielsen realization problem, which Kravetz thought he had solved with his erroneous result. Masur's paper \cite{Masur} rendered the problem open again. If balls had been strictly convex, then a po\-si\-tive solution to the Nielsen realization problem would have followed imme\-dia\-tely. In light of Theorem \ref{thm:mainthm}, this approach was doomed to fail. Thankfully, Kerckhoff solved Nielsen's problem many years ago using the convexity of hyperbolic length along earthquake paths \cite{Kerckhoff2}. See also \cite{Wolpert2} for a solution using Weil--Petersson geometry. Whether the fourth statement holds for Teichm\"uller space is an open question of Masur.

\subsection*{Sketch of proof}

Given $X \in \teich(S)$ and a simple closed curve $\alpha \subset S$, the extremal length $\el(\alpha,X)$ is the smallest $c$ such that a Euclidean cylinder of height $1$ and circumference $c$ embeds conformally in $X$ in the homotopy class of $\alpha$. Similarly, the extremal length of a multicurve is the least possible sum of circumferences of disjoint embedded cylinders of height $1$ (see Section \ref{sec:prelim}). The first step of the proof of Theorem \ref{thm:mainthm} is to reduce it to a statement about extremal length.   

\begin{lem} \label{lem:localmax}
If every ball in $\teich(S)$ is convex, then for every multi\-curve $\gamma \subset S$ and every Teichm\"uller geodesic $t \mapsto Z_t$ in $\teich(S)$ the function $t \mapsto \el(\gamma, Z_t)$ has no local maximum.  
\end{lem}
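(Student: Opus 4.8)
The plan is to argue by contradiction. Assume $\el(\gamma,Z_t)$ has a local maximum at some $t_0$; after reparametrizing, take $t_0=0$ and set $M=\el(\gamma,Z_0)$. I will produce a ball in $\teich(S)$ that is not convex, contradicting the hypothesis. Recall Kerckhoff's formula $d(X,Y)=\tfrac12\log\sup_\beta \el(\beta,Y)/\el(\beta,X)$, the supremum over simple closed curves (equivalently over measured foliations). Since a ball $\overline B(X,r)$ fails to be convex as soon as there is a geodesic segment with endpoints in $\overline B(X,r)$ containing an interior point at distance $>r$ from $X$, it is enough to find a center $X$ and some $\delta>0$ with $d(X,Z_{-\delta})<d(X,Z_0)$ and $d(X,Z_{\delta})<d(X,Z_0)$: then any $r$ strictly between $\max\{d(X,Z_{-\delta}),d(X,Z_\delta)\}$ and $d(X,Z_0)$ produces a non-convex ball, because $Z_0$ lies on the geodesic between $Z_{-\delta}$ and $Z_{\delta}$.

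The center should be taken far out along the Teichm\"uller (Jenkins--Strebel) ray $s\mapsto X_s$ emanating from $Z_0$ in the direction that \emph{shortens} $\gamma$: here $X_0=Z_0$, the underlying quadratic differential is the Jenkins--Strebel differential on $Z_0$ with horizontal foliation $\gamma$ (so $X_s$ is obtained by widening the flat cylinders with cores $\gamma$ so that their heights become $e^{s}$ and their circumferences $e^{-s}$ times those at $Z_0$), and $\el(\gamma,X_s)=e^{-2s}M$, $d(Z_0,X_s)=s$. For $X=X_s$ the curve $\gamma$ realizes the supremum in Kerckhoff's formula for the pair $(X_s,Z_0)$, so $d(X_s,Z_0)=\tfrac12\log\bigl(M/\el(\gamma,X_s)\bigr)=s$ exactly. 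The claim to establish is then
$$d(X_s,Z_{\pm\delta})<s \qquad\text{for all sufficiently large } s \text{ and some fixed } \delta>0 .$$
(One must also rule out that the local maximum is a plateau; this follows from the real-analyticity of $t\mapsto\el(\gamma,Z_t)$ along a Teichm\"uller geodesic, after which $\el(\gamma,Z_{\pm\delta})<M$ may be assumed.)

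To prove the claim, write $d(X_s,Z_\delta)=\tfrac12\log\sup_\beta \el(\beta,Z_\delta)/\el(\beta,X_s)$. A curve $\beta$ that crosses $\gamma$ must traverse the very tall, thin cylinders of $X_s$, which forces $\el(\beta,X_s)\gtrsim e^{2s}\,i(\beta,\gamma)^2$; hence $\el(\beta,Z_\delta)/\el(\beta,X_s)\to 0$ uniformly as $s\to\infty$ and such curves contribute nothing to the supremum for $s$ large. One is thus reduced to curves disjoint from $\gamma$: the components $\gamma_j$ of $\gamma$ (for which $\el(\gamma_j,X_s)=e^{-2s}c_j$, the Jenkins--Strebel cylinder being extremal, so that their contribution is $\tfrac12\log\bigl(e^{2s}\max_j \el(\gamma_j,Z_\delta)/c_j\bigr)$, which equals $s$ at $\delta=0$ and which the local-maximum hypothesis together with $\sum_j\el(\gamma_j,Z_\delta)\le\el(\gamma,Z_\delta)\le M=\sum_j c_j$ should push below $s$ for small $\delta$), and curves carried by the complementary subsurface $S\smallsetminus\gamma$. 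For the latter one uses that $\gamma$ accounts for the \emph{entire} horizontal foliation of the defining differential, so every curve not running along $\gamma$ is sheared and stretched as $s\to\infty$ and its extremal length on $X_s$ grows — after, if necessary, perturbing the basepoint $Z_0$ or the Jenkins--Strebel direction so that the complementary horizontal foliation is minimal and carries no cylinder of its own.

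The main obstacle is precisely this last step: obtaining a genuine \emph{upper} bound for $d(X_s,Z_{\pm\delta})$. Kerckhoff's formula gives lower bounds on Teichm\"uller distance for free but not upper bounds, so one has to control \emph{every} competing curve, and in the multicurve case the interaction among the components of $\gamma$ and the curves supported in $S\smallsetminus\gamma$ is delicate. Minsky's product-region theorem describes the metric near the thin part $\{\ell_\gamma<\varepsilon\}$ in which $X_s$ lives, but only up to an additive constant, which is too crude here since $\delta$ — and hence the amount by which $\el(\gamma,\cdot)$ drops — is forced to be small. What is really needed is the asymptotic geometry of the ray: in effect, that the Jenkins--Strebel rays for $\gamma$ based at $Z_0$ and at $Z_\delta$ become asymptotic, so that the two ``level-$s$'' surfaces are within $o(1)$ of each other as $s\to\infty$, which is where the analysis of the complementary foliation feeds back in. I expect this asymptotic comparison, rather than the reduction or the choice of center, to be the technical heart of the argument.
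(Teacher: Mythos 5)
Your overall strategy---center a ball at a point $X_s$ far out along the Jenkins--Strebel ray that shrinks $\gamma$, note $d(X_s,Z_0)=s$, and try to show $d(X_s,Z_{\pm\delta})<s$---is the same mechanism the paper uses, there packaged as the statement that every open horoball is a nested union of open balls; your appeal to real-analyticity to rule out plateaux also matches the paper. But the step you yourself flag as ``the main obstacle'' is a genuine gap, and your proposed route to it does not work. Kerckhoff's formula cannot produce the upper bound $d(X_s,Z_{\pm\delta})<s$ without uniform control of \emph{every} competing curve, and your sketch already fails for the components of $\gamma$: a local maximum of the weighted sum $\el(\gamma,Z_t)$ does not imply that each $\el(\gamma_j,Z_{\pm\delta})$ is smaller than its value at $Z_0$ (one component may be increasing while another decreases faster), so the claimed inequality $\tfrac12\log\bigl(e^{2s}\el(\gamma_j,Z_\delta)/c_j\bigr)<s$ is unjustified. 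Moreover $\el(\gamma_j,X_s)=e^{-2s}c_j$ is only an upper bound, since the Jenkins--Strebel cylinder for the multicurve need not be extremal for the single component $\gamma_j$.

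The paper avoids both difficulties. First, it does not run the argument for the multicurve at all: since $\el(\cdot,X)$ is continuous in the first variable and weighted simple closed curves are dense among multicurves, the strict four-point inequalities $\el(\gamma,Z_{-\delta})<\el(\gamma,Z_0)$ and $\el(\gamma,Z_0)>\el(\gamma,Z_{\delta})$ persist for a nearby weighted simple closed curve $w\beta$ (this is exactly why the criterion is phrased with four collinear points rather than with a local maximum, which would not survive the approximation). Second, for a single curve $\beta$ the bound $d(X_s,Y)<s$ is obtained not from Kerckhoff's formula but from an explicit quasiconformal map: Masur's Lemma~2 supplies an $L$--quasiconformal homeomorphism between the complements of the $\beta$--cylinders on two parallel Jenkins--Strebel rays, one interpolates across the cylinders by a horizontal stretch of factor strictly less than $e^{2s}$, and for $s$ large the resulting dilatation $\max\{\cdot\,,L\}$ is below $e^{2s}$. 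Without the reduction to a single curve and some substitute for this explicit map, your argument does not close.
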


It was shown in \cite{Lenzhen} that extremal length of a curve is not ne\-ces\-sarily convex along Teichm\"uller geodesics. Indeed, the authors of that paper constructed an example where the function $t \mapsto \el(\alpha, X_t)$ increases by a definite amount at first and then stays nearly constant on a later interval. The idea of our construction is to take such a pair $(\alpha,X_t)$ with the surface having a puncture, then another copy $(\beta,Y_t)$ of the same curve and surface but where the time parameter has been reversed and shifted, and to form a connected sum $Z_t = X_t \# Y_t$ via a small slit at the puncture. This is done in such a way that $t \mapsto Z_t$ is still a Teichm\"uller geodesic.

We then show that $\el(\alpha+\beta,Z_t)$ converges to $\el(\alpha, X_t)+\el(\beta, Y_t)$ as the size of the slit shrinks. If we arrange the time parameter of $Y_t$ so that $\el(\beta, Y_t)$ is nearly constant when $\el(\alpha, X_t)$ increases, and decreases when $\el(\alpha,X_t)$ is nearly constant, then their sum increases on the first interval and decreases on the second interval. By the convergence of $\el(\alpha+\beta,Z_t)$ to the sum, that quantity also increases during the first interval and decreases later, provided that the slit is small enough. This forces a local maximum in between, and thereby proves the existence of a non-convex ball.

This proof requires the surface $S$ to be the connected sum of two surfaces each of which is sufficiently complicated. It does not work when the complex dimension of $\teich(S)$ is less than $4$. For those lower complexity cases, our proof is based on rigorous numerical calculations.

\subsection*{Related results}

In \cite{Lenzhen}, Lenzhen and Rafi proved that balls in $\teich(S)$ are quasi-convex. More precisely, they showed that there exists a constant $c=c(S)$ such that for any ball $B\subset \teich(S)$, every geodesic segment with endpoints in $B$ stays within distance $c$ of $B$. In other words, balls cannot fail to be convex arbitrarily badly.

Theorem \ref{thm:mainthm} indicates that the Teichm\"uller metric is positively curved locally, where balls fail to be convex. There are also large-scale manifestations of positive curvature. Namely, there are unbounded regions in Teichm\"uller space where the Teichm\"uller metric looks like a sup metric on a product \cite{Minsky}. On the other hand, there is a sense in which $\teich(S)$ is hyperbolic relative to its thin parts \cite{MasurMinsky}. We refer the reader to \cite{MasurSurvey} for a survey on curvature aspects of the Teichm\"uller metric and to \cite{Rafi} for a coarse description of the Teichm\"uller metric and its geodesics.

Lastly, Theorem \ref{thm:mainthm} should be put in contrast with previous convexity results:
\begin{itemize}
\item $\teich(S)$ is holomorphically convex \cite{Bers};
\item hyperbolic length of a curve is convex along earthquake paths \cite{Kerckhoff2} and Weil--Petersson geodesics \cite{Wolpert2};
\item hyperbolic length \cite{Wolpert} and extremal length \cite{Miyachi} of a curve are log-plurisubharmonic.
\end{itemize}

\subsection*{Organization}

Section \ref{sec:prelim} starts with some background on Teichm\"uller theory. We then reformulate of the convexity problem in terms of extremal length in Section \ref{sec:horoballs}.  Section \ref{sec:conv} proves the convergence of extremal length under pinching deformations. Examples of local maxima for extremal length are constructed in Section \ref{sec:examples} for surfaces with enough topology. Finally, Section \ref{sec:lshapes} presents the numerical results which settle the lower complexity cases. 

\begin{acknowledgements}
The authors thank Jeremy Kahn for sugges\-ting the proof of Lemma \ref{lem:union}, Curtis McMullen for useful comments, and Vincent Delecroix and David Dumas for advice on computer-assisted proofs. The first author was partially supported by the Fonds de recher\-che du Qu\'ebec -- Nature et technologies. The second author was partially supported by NSERC grant \# 435885.
\end{acknowledgements}

\section{Preliminaries} \label{sec:prelim}

\subsection*{Teichm\"uller space}

A point in Teichm\"uller space $\teich(S)$ is a \emph{marked analytically finite complex structure} on $S$. This means a Riemann surface $X$ together with an orientation-preserving homeomorphism $f: S \to X$ which extends to a homeomorphism $\overline f : \overline S \to \overline X$, where $\overline X$ is a closed Riemann surface containing $X$. Two points $(X,f)$ and $(Y,g)$ are identified if there exists a conformal isomorphism $h:X \to Y$ homotopic to $g\circ f^{-1}$. We will write $X \in \teich(S)$, keeping the marking $f$ implicit. 

\subsection*{Teichm\"uller distance}

A linear map $\RR^2 \to \RR^2$ is \emph{$K$--quasiconformal} if it preserves signed angles up to a factor $K\geq 1$. Equivalently, a linear map is $K$--quasiconformal if it has positive determinant and sends circles to ellipses with major axis to minor axis ratio at most $K$.  

A homeomorphism between Riemann surfaces is \emph{$K$--quasiconformal} if its distributional partial derivatives are locally square-integrable and if its matrix of partial derivatives is $K$--quasiconformal almost everywhere. The \emph{dilatation} $\Dil(h)$ of a quasiconformal homeomorphism $h$ is the smallest $K$ for which it is $K$--quasiconformal. All quasiconformal homeomorphisms considered in this paper will be piecewise smooth.

Given $X$ and $Y$ in $\teich(S)$ with markings $f$ and $g$, the \emph{Teichm\"uller distance} between them is defined as
$$
d(X,Y) = \inf_h \frac12 \log \Dil(h)
$$
where the infimum is taken over all quasiconformal homeomorphisms $h: X \to Y$ homotopic to $g\circ f^{-1}$.

\subsection*{Half-translation structures}

A \emph{half-translation} in $\CC$ is either a translation or a rotation of angle $\pi$ about a point, i.e., a map of the form $z \mapsto \pm z + c$. A \emph{half-translation surface} $\Phi$ is a collection of polygons in $\CC$ with sides identified in pairs via half-translations, with at most finitely many points removed. The Euclidean metric descends to a metric on $\Phi$, which is flat except perhaps at finitely many singularities where the cone angle is a positive integer multiple of $\pi$. We require that there be no $\pi$--angle cone points, i.e., if such singularities arise, they should be removed. This is to make the surface non-positively curved.

A \emph{half-translation structure on $S$} is an orientation-preserving homeo\-morphism $f: S \to \Phi$ where $\Phi$ is a half-translation surface. Two half-translation structures $f:S\to \Phi$ and $g:S \to \Psi$ are equivalent if there is an isometry $h: \Phi \to \Psi$ homotopic to $g \circ  f^{-1}$ which preserves the horizontal direction.

There is a natural projection $\pi$ from the space $\flt(S)$ of half-transla\-tion structures on $S$ to $\teich(S)$ since half-translation structures are in parti\-cu\-lar complex structures. A \emph{half-translation structure on a Riemann surface} $X \in \teich(S)$ is one that projects to $X$ under $\pi$. The set $\pi^{-1}(X)$ of half-translation structures on $X$ is in bijection with the set of non-zero integrable holomorphic quadratic differentials on $X$. Given a quadratic differential $q$ on $X$, one obtains half-translation charts by integrating the $1$--form $\sqrt{q}$. Conversely, given a half-translation structure, the differential $dz^2$ in $\CC$ descends to a holomorphic quadratic differential on the underlying Riemann surface. See \cite{Strebel} for the definition and basic properties of quadratic differentials. We will switch back and forth between the two terminologies as convenient.

\subsection*{Teichm\"uller flow}

The group $\gl^+(2,\RR)$ of orientation-preserving linear automorphisms of $\RR^2$ acts on $\flt(S)$ since it conjugates the group of half-translations to itself. For every $t \in \RR$, the linear map
$$
\geod_t = \begin{pmatrix} e^t & 0 \\ 0 & e^{-t} \end{pmatrix}
$$ is $e^{2t}$--quasiconformal. The action of the diagonal subgroup $\left\{ \geod_t \,\mid\, t \in \RR  \right\}$ on $\flt(S)$ is called the \emph{Teich\-m\"uller flow}. A \emph{Teichm\"uller line} is the projection to $\teich(S)$ of the $\geod_t$--orbit of a half-translation structure $\Phi$, parametrized by $t \mapsto \pi\left( \geod_t  \Phi  \right)$.

Teichm\"uller proved that every Teichm\"uller line is a distance-minimi\-zing geo\-desic for the Teichm\"uller distance. He also proved that through any two distinct points in $\teich(S)$ passes a unique Teichm\"uller line.  

\subsection*{Extremal length}

A \emph{conformal metric} on a Riemann surface $X$ is a Borel mea\-su\-ra\-ble func\-tion $\rho : TX \to \RR_{\geq 0}$ such that $\rho(\lambda v) = |\lambda| \rho(v)$ for every $\lambda \in \CC$ and every tangent vector $v \in TX$. In other words, it is a choice of scale at each point.

Let $\Gamma$ be a set of $1$--manifolds in a Riemann surface $X$. The length of the set $\Gamma$ with respect to a conformal metric $\rho$ is
$$
\ell_\rho(\Gamma) = \ell(\Gamma,\rho) = \inf_{\gamma \in \Gamma} \int_\gamma \rho
$$
and the area of $\rho$ is $\int_X \rho^2$. The \emph{extremal length} of $\Gamma$ in $X$ is defined as
\begin{equation} \label{eq:ELdef}
\el(\Gamma,X) = \sup_\rho \frac{\ell_\rho(\Gamma)^2}{\area(\rho)}
\end{equation}
where the supremum is over all conformal metrics $\rho$ of finite positive area. 

Typically, one takes $\Gamma$ to be the free homotopy class of a simple closed curve $\alpha$ in $X$. We will abuse notation and write length or extremal length of a curve to mean the length or extremal length of its homotopy class. The basic example is when $X$ is an upright Euclidean cylinder of circumference $c$ and height $h$, and $\alpha$ is the curve wrapping once around $X$. In this case, the optimal metric $\rho$ is the Euclidean one and we get that $\el(\alpha,X) = c/h.$ We will write $\el(X)$ instead of $\el(\alpha,X)$ since the core curve $\alpha$ is unique up to homotopy.

Pulling-back metrics shows that extremal length does not increase under conformal embeddings. Thus if $X$ is any Riemann surface and $C \subset X$ is an embedded cylinder, then $\el(C)\geq \el(\alpha,X)$ where $\alpha$ is the core curve in $C$. If $X$ is analytically finite and $\alpha$ is \emph{essential}, meaning that it is not homotopic to a point or a puncture in $X$, then the equality $\el(C)=\el(\alpha,X) $ is achieved for a unique embedded annulus $C \subset X$ homotopic to $\alpha$. Furthermore, there exists a unique half-translation structure $\Phi\in \pi^{-1}(X)$ in which $C$ is an upright Euclidean cylinder of height $1$ and the equa\-lity $\el(\alpha, X)=\ell_\rho(\alpha)^2 / \area(\rho)$ holds if and only if $\rho$ is equal almost everywhere to a positive constant multiple of the Euclidean metric on $\Phi$. These results are due to Jenkins \cite{Jenkins}.

Let $\scc(S)$ be the $0$--skeleton of the curve complex of $S$, i.e., the set of homotopy classes of essential simple closed curves in $S$. Kerckhoff's formula \cite{Kerckhoff} states that for any two points $X, Y \in \teich(S)$ we have
\begin{equation} \label{eq:Kerckhoff}
d(X,Y) = \sup_{\alpha \in \scc(S)} \frac{1}{2} \log \frac{\el(\alpha,Y)}{\el(\alpha,X)}.
\end{equation}

That the Teichm\"uller distance is at least as large as the right-hand side follows from the fact that extremal length does not increase by more than a factor $K$ under $K$--quasiconformal homeomorphisms. This property (for all sets of curves $\Gamma$) is often taken as the definition of quasiconformal maps.

\subsection*{Multicurves}

A \emph{(weighted) multicurve} in $S$ is a formal positive linear combination of essential simple closed curves on $S$ that are pairwise disjoint and pairwise homotopically distinct. The set of homotopy classes of multicurves in $S$ will be denoted $\RR_+\times\mc(S)$; it is the cartesian product of $\RR_+$ with the curve complex.

The length of a multicurve $\alpha = \sum_{j\in J} w_j \alpha_j$ with respect to a conformal metric $\rho$ is the weighted sum of the lengths of its components:
$$
\ell_\rho(\alpha)=\sum_{j\in J} w_j \ell_\rho(\alpha_j).
$$
The definition \ref{eq:ELdef} of extremal length as a supremum of length squared divided by area then extends verbatim to multicurves. 

Given $X \in \teich(S)$ and a multicurve $\alpha = \sum_{j\in J} w_j \alpha_j$, we also have
\begin{equation} \label{eq:ELisinf}
\el(\alpha, X) = \inf \sum_{j\in J} w_j^2 \el(C_j)
\end{equation}
where the infimum is taken over all collections of cylinders $C_j$ embedded conformally and disjointly in $X$, with $C_j$ homotopic to $\alpha_j$ \cite{Dylan}. Again, the infimum is achieved by a unique collection of cylinders $C_j$ and there is a half-translation structure on $X$ in which each $C_j$ is foliated by horizontal trajectories and has height $w_j$ \cite{Renelt}. Such a half-translation structure obtained by gluing cylinders along their boundaries is known as a \emph{Jenkins-Strebel differential}.  

There is a topology on weighted multicurves defined using intersection numbers. With respect to this topology, weighted simple closed curves are dense, and for every $X \in \teich(S)$ the map $\alpha \mapsto \el(\alpha, X)$ is con\-ti\-nuous. In fact, Kerckhoff showed that this map extends continuously to all measured foliations \cite{Kerckhoff}.

\section{Horoballs} \label{sec:horoballs}

The goal of this section is to rephrase the problem of the convexity of balls in terms of extremal length. To this end, we look at sublevel sets of extremal length functions, which we call \emph{horoballs}.

\begin{defn}
Given $\alpha \in \scc(S)$ and $c>0$, we define the associated \emph{open horoball} as
$$
H(\alpha, c) = \{\, X \in \teich(S) : \el(\alpha, X) < c \,\}
$$
and the associated \emph{closed horoball} as
$$
\overline{H}(\alpha, c) = \{\, X \in \teich(S) : \el(\alpha, X) \leq c \,\}.
$$
\end{defn}

\begin{remark}
One can define horoballs for any measured foliation. We emphasize that we only consider horoballs associated with simple closed curves here. 
\end{remark}

Note that the closure of an open horoball is the corresponding closed horoball, and the interior of a closed horoball is the corresponding open horoball. This follows from the fact that the extremal length of a curve $\alpha$ is continuous in the second variable and does not have local minima in $\teich(S)$. Indeed, every point $X \in \teich(S)$ lies on a geodesic along which the extremal length of $\alpha$ increases exponentially, given by the Jenkins-Strebel differential on $X$ with a single cylinder homotopic to $\alpha$. In fact, the boundary of any horoball is a hypersurface in $\teich(S)$ homeomorphic to Euclidean space \cite{GardinerMasur}.

It follows directly from Kerckhoff's formula that closed balls are intersections of closed horoballs.

\begin{lem} \label{lem:intersect}
Every closed ball in $\teich(S)$ is a countable intersection of closed horo\-balls.
\end{lem}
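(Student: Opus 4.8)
The plan is to read the ball off directly from Kerckhoff's formula \eqref{eq:Kerckhoff}. Fix a closed ball $\overline{B} = \{\, Y \in \teich(S) : d(X,Y) \leq r \,\}$ with center $X \in \teich(S)$ and radius $r \geq 0$. For each essential simple closed curve $\alpha$, the extremal length $\el(\alpha, X)$ is a well-defined positive real number, so that $c_\alpha := e^{2r}\, \el(\alpha, X)$ is positive and the closed horoball $\overline{H}(\alpha, c_\alpha)$ is defined. By \eqref{eq:Kerckhoff}, a point $Y \in \teich(S)$ satisfies $d(X,Y) \leq r$ if and only if $\frac12 \log \frac{\el(\alpha, Y)}{\el(\alpha, X)} \leq r$ for every $\alpha \in \scc(S)$, and this inequality rearranges to $\el(\alpha, Y) \leq c_\alpha$. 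Hence
$$
\overline{B} \;=\; \bigcap_{\alpha \in \scc(S)} \overline{H}(\alpha, c_\alpha).
$$

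The passage from the single inequality $\sup_{\alpha} \tfrac12 \log \tfrac{\el(\alpha,Y)}{\el(\alpha,X)} \leq r$ to the family of inequalities $\tfrac12 \log \tfrac{\el(\alpha,Y)}{\el(\alpha,X)} \leq r$, one for each $\alpha$, is nothing more than the definition of the supremum, so no work is needed there; it also handles the degenerate case $r = 0$, where the displayed intersection collapses to the singleton $\{X\}$ because $d$ is a metric. What makes the statement a \emph{countable} intersection is that $\scc(S)$ is countable: since $S$ has finite topological type, $\pi_1(S)$ is finitely generated, hence countable, and free homotopy classes of closed curves inject into the set of conjugacy classes of $\pi_1(S)$; a fortiori there are only countably many homotopy classes of essential simple closed curves. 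Combining the two observations gives the lemma.

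I do not foresee a real obstacle here: the content is entirely carried by Kerckhoff's formula together with the countability of the curve complex. The only points to be careful about are that the set equality above is an honest equality (both inclusions follow at once from the "if and only if"), and that each threshold $c_\alpha$ is strictly positive, so that the sets on the right-hand side are genuinely closed horoballs in the sense of the definition.
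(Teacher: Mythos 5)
Your argument is the same as the paper's: apply Kerckhoff's formula \eqref{eq:Kerckhoff} to identify $\overline{B}(X,r)$ with $\bigcap_{\alpha \in \scc(S)} \overline{H}(\alpha, e^{2r}\el(\alpha,X))$, the countability coming from the countability of $\scc(S)$. The proof is correct, and your remark on countability even makes explicit a point the paper leaves implicit.
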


\begin{proof}

Let $X \in \teich(S)$, let $r\geq 0$ and let $\overline{B}(X,r)$ be the closed ball of radius $r$ centered at $X$. By equation \ref{eq:Kerckhoff},  $d(X,Y) \leq r$ if and only if $$\el(\alpha, Y) \leq e^{2r}\el(\alpha,X)$$ for every $\alpha \in \scc(S)$, which shows that
$$
\overline{B}(X,r)= \bigcap_{\alpha \in \scc(S)} \overline{H}(\alpha,e^{2r} \el(\alpha,X)).
$$
\end{proof}

In hyperbolic space, horoballs are limits of larger and larger balls with centers escaping to infinity. More precisely, any open horoball is the union of all the open balls that share a given normal vector. The same description holds in Teichm\"uller space.     

\begin{lem} \label{lem:union}
Every open horoball in $\teich(S)$ is a nested union of open balls.
\end{lem}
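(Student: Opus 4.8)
The plan is to fix an open horoball $H(\alpha,c)$ and realize it as an increasing union of balls $\overline B(X_n,r_n)$ whose centers $X_n$ run off to infinity along a Teichm\"uller geodesic ray while the radii $r_n$ grow at a matching rate, so that the level sets of $Y \mapsto d(X_n,Y) - r_n$ converge to the level set $\{\el(\alpha,Y)=c\}$. Concretely, pick any $Y_0$ with $\el(\alpha,Y_0)<c$ and let $t \mapsto Y_t$ be the Teichm\"uller geodesic ray starting at $Y_0$ determined by the Jenkins--Strebel differential on $Y_0$ with a single cylinder homotopic to $\alpha$; along this ray $\el(\alpha,Y_t)$ decreases (indeed $\el(\alpha, Y_{-t}) = e^{4t}\el(\alpha,Y_0)$ in the parametrization where $\el$ grows at the maximal rate), so running $t \to -\infty$ in the other direction is where $\el(\alpha,\cdot)$ shrinks. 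I will take the centers to be $X_n = Y_{-s_n}$ for $s_n \to \infty$, and set $r_n = \frac12 \log\big(c/\el(\alpha,X_n)\big) = \frac12\log c - \frac12 \log\el(\alpha, X_n)$, which is positive for $n$ large. The key structural input is that the horoball $H(\alpha, c')$ is, by Kerckhoff's formula, contained in $\overline B\big(X, \tfrac12 \log (c'/\el(\alpha,X))\big)$ for any $X$ — one inclusion is immediate since $d(X,Y) \geq \frac12 \log(\el(\alpha,Y)/\el(\alpha,X))$ — and conversely any ball $\overline B(X_n,r_n)$ with $X_n$ on this particular geodesic is contained in $\overline H(\alpha, c)$ because the geodesic is a \emph{horocyclic} direction for $\alpha$: along it, extremal length of $\alpha$ is asymptotically sharp in Kerckhoff's formula.

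In more detail, the two containments I must establish are: (i) $\overline B(X_n, r_n) \subseteq \overline H(\alpha,c)$ for every $n$; and (ii) $H(\alpha,c) \subseteq \bigcup_n \overline B(X_n,r_n)$. For (i), if $d(X_n, Y) \leq r_n$ then by Kerckhoff's formula $\el(\alpha,Y) \leq e^{2r_n}\el(\alpha,X_n) = c$, so $Y \in \overline H(\alpha,c)$; this direction is purely formal and does not even use the special geodesic. For (ii) — the substantive direction — I take $Y$ with $\el(\alpha,Y) < c$ and must show $d(X_n, Y) \leq r_n$ for all large $n$, i.e.
$$
d(Y_{-s_n}, Y) \leq \tfrac12 \log c - \tfrac12 \log \el(\alpha, Y_{-s_n}).
$$
Since $\el(\alpha, Y_{-s_n}) \to 0$ as $s_n \to \infty$ (extremal length of $\alpha$ decays along this ray toward the puncture $\alpha$), the right-hand side tends to $+\infty$; meanwhile $d(Y_{-s_n},Y) \leq d(Y_{-s_n}, Y_0) + d(Y_0,Y)$, and $d(Y_{-s_n},Y_0) = \frac14 \log(\el(\alpha,Y_{-s_n})^{-1}/\el(\alpha,Y_0)^{-1}) + O(1)$... here I need a sharper statement than the triangle inequality, namely that $d(Y_{-s_n}, Y) - \big(\tfrac12 \log c - \tfrac12 \log \el(\alpha,Y_{-s_n})\big) \to -\infty$, or at least becomes negative. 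The clean way to get this is to use that $Y_t$ is the geodesic along which $\el(\alpha,\cdot)$ degenerates maximally, so by Kerckhoff's formula $d(Y_{-s}, Y) \geq \frac12 \log \frac{\el(\alpha,Y)}{\el(\alpha,Y_{-s})}$, and I want the reverse inequality to hold asymptotically. This should follow from Kerckhoff's formula combined with the fact that for curves $\beta$ other than $\alpha$, the ratio $\el(\beta,Y_{-s})/\el(\beta,Y)$ stays bounded (or grows slower), because along the $\alpha$--degenerating ray only $\alpha$ and curves intersecting it badly are affected, and those are controlled by $Y$'s geometry; more robustly, one invokes that the Gromov product / Busemann function associated to this ray recovers $\log\el(\alpha,\cdot)$ up to bounded error — this is exactly the content of the Gardiner--Masur-type boundary behavior cited after the definition of horoballs.

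I expect the main obstacle to be making rigorous the claim that $d(X_n, Y) - r_n \to -\infty$ (equivalently, that the balls genuinely exhaust the horoball and do not stabilize to a proper subset), since this requires an \emph{upper} bound on the Teichm\"uller distance $d(Y_{-s_n}, Y)$ that is not merely the triangle inequality but captures that $Y$ lies "inside" the horoball relative to the ray's direction. The safe route, which I would take, is to avoid asymptotics altogether: observe that for $Y$ with $\el(\alpha,Y) = c'' < c$, the point $Y$ itself lies on \emph{some} geodesic ray $t \mapsto W_t$ along which $\el(\alpha,W_t) \to 0$, and along that ray $W_{-t}$ with $d(Y, W_{-t}) = \frac14\log(c''/\el(\alpha,W_{-t}))$; then $W_{-t} \in \overline H(\alpha, c)$ once $\el(\alpha, W_{-t})$ is small, and $Y \in \overline B(W_{-t}, \frac14 \log(c''/\el(\alpha,W_{-t})))$. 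To fit everything into one nested family I reconcile these rays using that all such balls lie in $\overline H(\alpha,c)$ by step (i), and that $\overline H(\alpha,c)$ is itself (by the same Kerckhoff argument) contained in a single increasing union along one fixed ray once one checks the Busemann function of that ray equals $\frac12 \log\el(\alpha,\cdot)$ up to an additive constant — this last equality is the one genuinely non-formal ingredient, and I would either cite \cite{GardinerMasur} for it or prove it directly from \eqref{eq:Kerckhoff} by showing the sup is asymptotically attained at $\alpha$ along the ray.
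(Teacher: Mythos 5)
Your setup is right and the easy half is correct: the inclusion $B(X_t,t)\subset H(\alpha,c)$ is immediate from Kerckhoff's formula, and nestedness along a single ray follows from the triangle inequality. You also correctly isolate the substantive point, namely an \emph{upper} bound $d(X_t,Y)<t$ for each fixed $Y$ in the horoball. But neither of your two routes closes this gap. (a) A Busemann-function identity ``up to an additive constant'' is strictly too weak: if you only know $d(X_t,Y)\le t+\tfrac12\log(\el(\alpha,Y)/c)+C$, then $Y$ lies in $B(X_t,t+C')$, and by Kerckhoff that ball is only contained in the larger horoball $H(\alpha,e^{2C'}c)$ --- an additive error in distance is a multiplicative error in extremal length, so you would prove that the horoball is sandwiched between unions of balls, not that it equals one. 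Proving the error tends to zero (equivalently, that the supremum in \eqref{eq:Kerckhoff} is asymptotically attained at $\alpha$ along the ray) \emph{is} the hard content, and \cite{GardinerMasur} does not supply it. (b) Your ``safe route'' --- running a separate $\alpha$-degenerating ray through each $Y$ --- shows only that $H(\alpha,c)$ is a union of balls contained in it, with no nesting; reassembling these into one nested family along a fixed ray is exactly the estimate you were trying to avoid.

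The missing ingredient is an explicit quasiconformal map, and this is how the paper proceeds. Fix $X\in\partial H(\alpha,c)$ and $Y$ with $b=\el(\alpha,Y)<c$, and flow both backward along their $\alpha$--Jenkins--Strebel geodesics to $X_s$ and $Y_s$, choosing $s<0$ with $c/b-e^{2s}>1$. Masur's Lemma 2 from \cite{Masur} gives a marking-preserving quasiconformal map $f:Y_s\to X_s$ of some dilatation $L$ that is an isometry on the core circle of the $\alpha$--cylinder. Since $Y$ (resp.\ $X_t$) is obtained from $Y_s$ (resp.\ $X_s$) by cutting along that circle and inserting a flat cylinder of modulus $\tfrac1b-\tfrac{e^{2s}}{c}$ (resp.\ $\tfrac{e^{2t}}{c}-\tfrac{e^{2s}}{c}$), one builds $g_t:Y\to X_t$ equal to $f$ off the inserted cylinder and to a horizontal stretch by the factor $(e^{2t}-e^{2s})/(\tfrac{c}{b}-e^{2s})<e^{2t}$ on it. Hence $K_t=\max\{L,\text{stretch}\}<e^{2t}$ once $t$ is large, giving $d(X_t,Y)<t$. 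Without this (or an equivalent sharp upper bound on $d(X_t,Y)$), your argument does not establish the lemma.
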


\begin{proof}
Let $\alpha \in \scc(S)$ and let $c > 0$. Pick an arbitrary point $X \in \partial H(\alpha,c)$ and consider the geodesic $X_t$ defined by the half-translation structure $\Phi$ on $X$ in which almost all \emph{vertical} trajectories are homotopic to $\alpha$, so that $\el(\alpha,X_t)=e^{-2t}c$. We will show that
$$
H(\alpha,c) = \bigcup_{t>0} B(X_t,t).
$$

If $d(X_t,Y)< t$ then $$\el(\alpha,Y) < e^{2t} \el(\alpha,X_t) = c$$ by Kerckhoff's formula, which shows that $B(X_t,t) \subset H(\alpha,c)$ for every $t>0$. The triangle inequality implies that $B(X_s,s) \subset B(X_t,t)$ whenever $0<s<t$, i.e., the union is nested.

Let $Y \in H(\alpha,c)$ and let $b=\el(\alpha,Y)$. We need to show that $Y \in B(X_t,t)$ when $t$ is large enough, which amounts to constructing a $K$--quasiconformal homeo\-morphism between $Y$ and $X_t$ with $K < e^{2t}$. The construction is essentially the same as the one in \cite{Masur} showing that certain geodesic rays in Teichm\"uller space stay a bounded distance apart. 

Let $Y_s$ be the geodesic through $Y$ corresponding to the half-translation structure in which almost all vertical trajectories are homotopic to $\alpha$, but with the time parameter shifted so that $\el(\alpha,Y_s)=e^{-2s}c$. Then fix an $s<0$ such that $$\frac{c}{b}-e^{2s} > 1.$$ 

Let $\alpha_{Y}$ (respectively $\alpha_{X}$) be the closed vertical trajectory in the half-translation structure on $Y_s$ (respectively $X_s$) that splits the $\alpha$--cylinder in two equal parts. By \cite[Lemma 2]{Masur}, there exists a  quasiconformal homeomorphism $f: Y_s \to X_s$ that respects the markings and sends $\alpha_{Y}$ isometrically onto $\alpha_{X}$. Let $L$ be the quasiconformal dilatation of $f$.

We rescale the flat metric on $Y$ and $Y_s$ so that the circumference of the $\alpha$--cylinder is $1$. After rescaling, the Teichm\"uller map $Y_s \to Y$ becomes a horizontal stretch by some factor bigger than $1$. Now $Y_s$ is a cylinder with boundary identifications, and stretching a cylinder lengthwise is the same as cutting it in the middle and inserting another piece of cylinder to make it longer. In other words, $Y$ can be obtained by cutting $Y_s$ open along the core curve $\alpha_{Y}$ and gluing back a cylinder of modulus\footnote{The modulus of a Euclidean cylinder is the reciprocal of its extremal length, i.e., the distance between its boundary components once it has been rescaled to have circumference $1$.} $$\frac{1}{\el(\alpha,Y)} - \frac{1}{\el(\alpha,Y_s)}=\frac{1}{b} - \frac{e^{2s}}{c}$$ without twisting. Similarly $X_t$ can be obtained from $X_s$ by inserting a cylinder of modulus $$\frac{1}{\el(\alpha,X_t)} - \frac{1}{\el(\alpha,X_s)}= \frac{e^{2t}}{c}-\frac{e^{2s}}{c}$$ in the middle.

From this cut-and-paste decomposition of $Y$ and $X_t$, we can define a marking-preserving homeomorphism $g_t: Y \to X_t$ by using $f$ on the complement of the middle cylinder and using the horizontal stretch map of magnitude
$$
 \frac{e^{2t}-e^{2s}}{\frac{c}{b} - e^{2s}} < e^{2t}-e^{2s} < e^{2t}
$$ 
on the middle cylinder. Then $g_t$ is $K_t$--quasiconformal where $$K_t= \max\left\{\frac{e^{2t}-e^{2s}}{\frac{c}{b} - e^{2s}}, L \right\}.$$ If $t$ is large enough, then $L < e^{2t}$ and hence $d(X_t, Y)\leq \frac12 \log K_t < t$.

\end{proof}

The previous lemmata imply that the convexity of balls is equivalent to the convexity of horoballs.

\begin{thm} \label{thm:equiv}
The following are equivalent in $\teich(S)$:
\begin{itemize}
\item every closed horoball is convex;
\item every closed ball is convex;
\item every open ball is convex;
\item every open horoball is convex.
\end{itemize}
\end{thm}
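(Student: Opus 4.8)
The plan is to prove the four statements equivalent by establishing a cycle of implications, using Lemma \ref{lem:intersect} and Lemma \ref{lem:union} to pass between balls and horoballs, and using the openness/closedness relationship between the two types of horoballs (noted just before Lemma \ref{lem:intersect}) to pass between the open and closed versions. Two of the implications are essentially immediate from general facts about convexity: an arbitrary intersection of convex sets is convex, and a nested (more generally, directed) union of convex sets is convex. The content is in getting from a ball statement to a horoball statement, and in this metric space convexity refers to geodesic convexity, but since $\teich(S)$ is uniquely geodesic this is unambiguous; I will use it freely.

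First I would show ``every closed horoball is convex'' $\Rightarrow$ ``every closed ball is convex'': this is immediate from Lemma \ref{lem:intersect}, since a closed ball is a (countable) intersection of closed horoballs and an intersection of convex sets is convex. Next, ``every closed ball is convex'' $\Rightarrow$ ``every open ball is convex'': here I would use that an open ball $B(X,r)$ is the nested union $\bigcup_{0<s<r} \overline B(X,s)$ of closed balls, together with the general fact that a nested union of convex sets is convex (given two points $Y,Z$ in the union, both lie in some $\overline B(X,s)$ for $s$ close enough to $r$, hence so does the geodesic between them, since $\teich(S)$ is uniquely geodesic). Then ``every open ball is convex'' $\Rightarrow$ ``every open horoball is convex'': this is Lemma \ref{lem:union}, which writes an open horoball as a nested union of open balls, again invoking the nested-union-of-convex-sets fact. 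Finally, ``every open horoball is convex'' $\Rightarrow$ ``every closed horoball is convex'': since $\overline H(\alpha,c)$ is the closure of $H(\alpha,c)$ (as recalled before Lemma \ref{lem:intersect}), I would use that the closure of a convex set is convex in a space where geodesics between points in the closure are limits of geodesics between nearby interior points; concretely, given $Y,Z \in \overline H(\alpha,c)$, approximate by $Y_n,Z_n$ in the open horoball, note the geodesic $[Y_n,Z_n]$ lies in $H(\alpha,c)$, and pass to the limit using continuity of $X\mapsto\el(\alpha,X)$ and continuous dependence of Teichm\"uller geodesics on their endpoints.

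I expect the main obstacle to be the last implication, closing the cycle from open horoballs back to closed horoballs. The clean statement ``the closure of a geodesically convex set is convex'' is not true in every geodesic metric space, so I would want to justify it in the present setting rather than cite it as folklore. The safest route is to use the explicit structure: $\teich(S)$ is uniquely geodesic and, by Teichm\"uller's theorem, the geodesic segment between two points depends continuously on the endpoints; together with continuity of extremal length in the Teichm\"uller metric, a limiting argument then shows $[Y,Z]\subset \overline H(\alpha,c)$. Alternatively, one could avoid this by sharpening Lemma \ref{lem:union} to also express each \emph{closed} horoball as a nested union of \emph{closed} balls and running a parallel cycle through the closed objects only, but the limiting argument is shorter. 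The remaining steps are routine once one records the two elementary lemmas about convexity (intersections, nested unions) and cites unique geodesicity.
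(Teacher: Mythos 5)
Your cycle of implications is the same one the paper uses, and three of the four steps match the paper's proof essentially verbatim: closed horoballs to closed balls via Lemma \ref{lem:intersect} and intersections of convex sets; closed balls to open balls (the paper phrases this contrapositively -- a non-convex open ball contains a smaller non-convex closed ball concentric with it -- but this is the same observation as your nested union of closed balls); and open balls to open horoballs via Lemma \ref{lem:union} and nested unions. The only genuine divergence is the last implication, from open horoballs back to closed horoballs. You take $Y,Z\in\overline{H}(\alpha,c)$, approximate from inside the open horoball, and pass to the limit, which forces you to invoke continuous dependence of Teichm\"uller geodesic segments on their endpoints. That fact is true (and in fact the general statement ``the closure of a convex set is convex'' does hold in any \emph{proper} uniquely geodesic space, by an Arzel\`a--Ascoli argument, so the obstacle you flag is surmountable), but it is an extra analytic ingredient. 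The paper sidesteps it entirely with a purely pointwise contrapositive: if $\overline{H}(\alpha,c)$ is non-convex, pick $Y,Z$ in it and a point $P$ on $[Y,Z]$ with $\el(\alpha,P)>c$; then for any $c'$ with $c<c'<\el(\alpha,P)$ the open horoball $H(\alpha,c')$ contains $Y$ and $Z$ but not $P$, so it is non-convex as well. That version needs nothing beyond the definitions, so if you want the shortest complete argument you should swap your limiting step for this one; otherwise your proof is correct as proposed, provided you actually record the properness of $\teich(S)$ (or cite continuity of geodesics on endpoints) rather than leaving it as a remark.
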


\begin{proof}
If every closed horoball is convex, then every closed ball is convex by Lemma \ref{lem:intersect}, since an arbitrary intersection of convex sets is convex. 

If every closed ball is convex, then every open ball is convex. Indeed, if an open ball is not convex, then there is a smaller closed ball with the same center which is non-convex.

If every open ball is convex, then every open horoball is convex by Lemma \ref{lem:union}, since nested unions of convex sets are convex. 

Suppose that a closed horoball is non-convex. Then the open horo\-balls of slightly higher level for the same simple closed curve are also non-convex. Thus if every open horoball is convex, then every closed horoball is convex.
\end{proof}

Therefore, to show the existence of a non-convex ball, we need to find a non-convex horoball. More explicitly, we need to find a simple closed curve $\alpha \in \scc(S)$ and three points $X,Y,Z \in \teich(S)$ appearing in that order along a geodesic such that the extremal length of $\alpha$ in $Y$ is strictly larger than in both $X$ and $Z$. We can weaken this criterion by allowing $\alpha$ to be a multicurve and replacing the $3$--point condition by a $4$--point condition. This will be useful later.

\begin{lem} \label{lem:criterion}
Suppose that there exists four points $X,Y,Z,W$ appearing in that order along a geodesic in $\teich(S)$ and a multicurve $\alpha \in \RR_+ \times \mc(S)$ such that $$\el(\alpha, X)<\el(\alpha, Y) \quad \mbox{and} \quad \el(\alpha, Z)>\el(\alpha, W).$$ 
Then there exists a non-convex ball in $\teich(S)$.
\end{lem}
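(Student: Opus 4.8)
The plan is to extract from the hypotheses the three-point condition for a single simple closed curve that was described just before the statement — three points $P,Q,R$ occurring in that order along a geodesic with $\el(\gamma,Q)$ strictly larger than both $\el(\gamma,P)$ and $\el(\gamma,R)$ for some $\gamma \in \scc(S)$ — and then to read off a non-convex horoball and apply Theorem~\ref{thm:equiv}. There are three steps: a soft extreme-value argument producing a multicurve with a ``bump'' strictly between two of the points; an approximation replacing the multicurve by a simple closed curve; and the translation into a non-convex closed horoball.

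For the first step I would parametrize the geodesic so that $X,Y,Z,W$ correspond to parameters $t_1<t_2<t_3<t_4$, write $Z_t$ for the point at parameter $t$, and set $\phi(t)=\el(\alpha,Z_t)$. Since extremal length is continuous in the Teichm\"uller variable (indeed even as a function on measured foliations, by Kerckhoff), $\phi$ is continuous and attains its maximum over the compact interval $[t_1,t_4]$ at some $t^{\ast}$. The hypotheses give $\phi(t_1)<\phi(t_2)\le \phi(t^{\ast})$ and $\phi(t_4)<\phi(t_3)\le \phi(t^{\ast})$, which forces $t^{\ast}\in(t_1,t_4)$. Setting $P=X=Z_{t_1}$, $Q=Z_{t^{\ast}}$, $R=W=Z_{t_4}$, these three points occur in that order along the geodesic and satisfy $\el(\alpha,Q)>\el(\alpha,P)$ and $\el(\alpha,Q)>\el(\alpha,R)$.

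For the second step I would pick $\eps>0$ with $2\eps<\el(\alpha,Q)-\el(\alpha,P)$ and $2\eps<\el(\alpha,Q)-\el(\alpha,R)$. The maps $\beta\mapsto \el(\beta,V)$ for $V\in\{P,Q,R\}$ are continuous on the space of weighted multicurves, and weighted simple closed curves are dense there (both facts recorded at the end of Section~\ref{sec:prelim}), so there is a weighted simple closed curve $w\gamma$, with $\gamma\in\scc(S)$ and $w>0$, such that $|\el(w\gamma,V)-\el(\alpha,V)|<\eps$ for each $V\in\{P,Q,R\}$. Then $\el(w\gamma,Q)>\el(w\gamma,P)$ and $\el(w\gamma,Q)>\el(w\gamma,R)$, and dividing through by $w^2$ using $\el(w\gamma,\cdot)=w^2\,\el(\gamma,\cdot)$ gives the same strict inequalities for $\gamma$ itself.

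Finally I would fix $c$ with $\max\{\el(\gamma,P),\el(\gamma,R)\}<c<\el(\gamma,Q)$. Then $P$ and $R$ lie in $\overline H(\gamma,c)$, while the point $Q$, which lies on the (unique) geodesic segment joining $P$ to $R$, does not; hence $\overline H(\gamma,c)$ is not convex, and Theorem~\ref{thm:equiv} yields a non-convex ball in $\teich(S)$. The only step that is not essentially formal is the approximation, where continuity and density of extremal length on weighted multicurves are used together; since both are already available from Section~\ref{sec:prelim}, I expect this to require only careful bookkeeping rather than new ideas.
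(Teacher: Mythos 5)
Your proof is correct and follows essentially the same route as the paper: approximate the multicurve by a weighted simple closed curve using continuity in the first variable together with density, drop the weight by homogeneity of degree $2$, and exhibit a non-convex closed horoball to invoke Theorem~\ref{thm:equiv}. The only difference is cosmetic: the paper skips your extreme-value reduction to a three-point configuration by taking $c=\max(\el(\beta,X),\el(\beta,W))$ and observing that at least one of $Y$ or $Z$ must then lie outside $\overline{H}(\beta,c)$.
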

\begin{proof}
Since extremal length depends continuously on the first variable and since weighted simple closed curves are dense in $\RR_+ \times \mc(S)$, there exists a weighted simple closed curve $w \beta$ such that $$\el(w\beta, X)<\el(w\beta, Y) \quad \mbox{and} \quad \el(w\beta, Z)>\el(w\beta, W).$$
As extremal length is homogeneous of degree $2$ in the first variable, we also have $$\el(\beta, X)<\el(\beta, Y) \quad \mbox{and} \quad \el(\beta, Z)>\el(\beta, W).$$

Let $c = \max(\el(\beta, X), \el(\beta, W))$. Then the geodesic segment from $X$ to $W$ has its endpoints in $\overline{H}(\beta, c)$ and passes through $Y$ and $Z$. At least one of $Y$ or $Z$ lies outside $\overline{H}(\beta,c)$, so that the closed horoball $\overline{H}(\beta,c)$ is non-convex. By Theorem \ref{thm:equiv}, this implies the existence of a non-convex ball.
\end{proof}

If extremal length of a multicurve has a strict local maximum along a geodesic, then we can clearly find $4$ points satisfying the hypotheses of the above lemma, so that non-convex balls exist. It turns out that every local maximum is a strict local maximum (see below), which implies Lemma \ref{lem:localmax}. In practice, the hypotheses of Lemma \ref{lem:criterion} are easier to check than the existence of a local maximum, but Lemma \ref{lem:localmax} was simpler to state for the introduction.
  
\begin{lem}
Any local maximum of extremal length along a geodesic is a strict local maximum.
\end{lem}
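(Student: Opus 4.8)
The plan is to argue by contradiction. Suppose $t\mapsto \el(\alpha,Z_t)$ has a local maximum at some $t_0$ that is not strict. After reparametrizing the geodesic I may take $t_0=0$; set $c=\el(\alpha,Z_0)$, write $F(t)=\el(\alpha,Z_t)$, and fix $\eps>0$ with $F(t)\le c$ for $|t|\le\eps$. Non-strictness gives a sequence $t_n\to 0$ with $t_n\ne 0$ and $F(t_n)=c$; passing to a subsequence I may assume $0<t_n\le\eps$ and $t_n\downarrow 0$. In the language of Section~\ref{sec:horoballs}, the points $Z_0$ and $Z_{t_n}$ lie on $\partial H(\alpha,c)$ while the whole segment $\{Z_t:0\le t\le t_1\}$ lies in $\overline{H}(\alpha,c)$; equivalently, $F$ attains its maximum value $c$ on a subset of $[0,t_1]$ accumulating at $0$.

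The heart of the argument is the regularity of $F$. I would use that $F$ is real-analytic in $t$. Indeed, the quadratic differential on $Z_t$ realizing $\el(\alpha,Z_t)$ is the Jenkins--Strebel differential whose cylinders are homotopic to the components of $\alpha$ with heights the corresponding weights, and $F(t)$ is its area; away from the discrete set of parameters at which the combinatorial type of this differential changes, its periods solve a real-analytic system (the prescribed heights together with the closing-up conditions), so $F$ is real-analytic there — and one expects it to remain real-analytic across the transitions, the $C^1$ regularity being classical. Granting that $F$ is real-analytic on all of $\RR$, the function $F-c$ is real-analytic, non-positive near $0$, and vanishes along the sequence $t_n\to 0$, hence vanishes identically by the identity theorem. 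Thus $\el(\alpha,Z_t)=c$ for every $t\in\RR$. (If one only has piecewise real-analyticity, the identity theorem applied to the analytic piece of $F$ lying on the side of the $t_n$ still gives $F\equiv c$ on a one-sided interval, and one then propagates constancy across the successive transition times using the smoothness of $F$; this again forces $F$ to be constant on a half-line.)

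This is impossible because $\el(\alpha,\cdot)$ is unbounded along every Teichm\"uller geodesic. If $q$ is the quadratic differential defining $Z_t$, with horizontal and vertical measured foliations $\mathcal F_h$ and $\mathcal F_v$, and $q_t$ is the corresponding differential at $Z_t$, then the inequality $\el(\alpha,Z_t)\,\el(\beta,Z_t)\ge i(\alpha,\beta)^2$ (a standard consequence of the definition of extremal length, valid for any measured foliation $\beta$), applied with $\beta=\mathcal F_v(q_t)$ and using $\el(\mathcal F_v(q_t),Z_t)=\area(q_t)=\area(q)$ together with $i(\alpha,\mathcal F_v(q_t))=e^{t}\,i(\alpha,\mathcal F_v(q))$, gives $\el(\alpha,Z_t)\ge e^{2t}\,i(\alpha,\mathcal F_v(q))^2/\area(q)$; symmetrically $\el(\alpha,Z_t)\ge e^{-2t}\,i(\alpha,\mathcal F_h(q))^2/\area(q)$. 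Since $\mathcal F_h$ and $\mathcal F_v$ fill $S$, at least one of these intersection numbers is positive, so $F(t)\to\infty$ as $t\to+\infty$ or as $t\to-\infty$, contradicting $F\equiv c$.

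The main obstacle is the regularity input above: establishing that $\el(\alpha,\cdot)$ is real-analytic — or at least piecewise real-analytic and smooth enough to propagate constancy — along Teichm\"uller geodesics. This rests on the variational theory of extremal length and on controlling how the extremal Jenkins--Strebel differential varies with the underlying Riemann surface, particularly across the parameters where its critical graph changes combinatorial type.
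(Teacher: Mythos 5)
Your argument follows essentially the same route as the paper's proof: a non-strict local maximum means the maximal value is attained on a set accumulating at the maximum point, real-analyticity plus the identity principle then forces $t\mapsto\el(\alpha,Z_t)$ to be constant, and constancy is contradicted by exponential growth of extremal length in at least one time direction. Your Minsky-type inequality against the vertical and horizontal foliations of the defining differential is just a repackaging of the paper's lower bound $\el(\alpha,Z_t)\ge \ell(\alpha,\rho_t)^2/\area(\rho_t)$, where $\rho_t$ is the flat metric (of constant area) and $\ell(\alpha,\rho_t)$ is bounded below by the intersection numbers with the two foliations, at least one of which is nonzero and grows exponentially. The one input you flag as the main obstacle---real-analyticity of $F$---is exactly what the paper takes off the shelf by citing \cite{Miyachi}; no variational analysis of how the extremal Jenkins--Strebel differential changes combinatorial type is attempted there, and your sketch of such an analysis is not needed once that reference is invoked. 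Do note, however, that your fallback is not sound as stated: piecewise real-analyticity together with $C^1$ regularity does not allow you to propagate constancy across a transition time (the function $t\mapsto\max(t,0)^2$ is $C^1$, piecewise analytic, constant on a half-line, yet not constant), so the argument genuinely requires analyticity of $F$ itself, i.e., the cited result.
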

\begin{proof}
Suppose there is a multicurve $\alpha\in \RR_+ \times \mc(S)$ and a geodesic $t \mapsto X_t$ in $\teich(S)$ such that the function $f(t)=\el(\alpha, X_t)$ has a local maxi\-mum at $T$ and let $M=f(T)$. If the local maximum is not strict, then $f^{-1}(M)$ accumulates at $T$. Since $f$ is real-analytic \cite{Miyachi}, it must be constant by the identity principle. But this is impossible. Indeed, take $\rho_t$ to be the Euclidean metric defining the geodesic $X_t$. Then the area of $\rho_t$ is constant, whereas the length $\ell(\alpha,\rho_t)$ is unbounded in at least one direction. Indeed, $\ell(\alpha,\rho_t)$ is bounded below by the intersection number between $\alpha$ and the vertical foliation on $X_t$ as well as by the intersection number with the horizontal foliation. These intersection numbers depend exponentially on $t$ and at least one of them is non-zero, so that it diverges as $t \to \infty$ or as $t \to -\infty$. Since $f(t) \geq \ell(\alpha,\rho_t)^2 / \area(\rho_t)$, this is a contradiction.
\end{proof} 

Rather than exhibiting $4$ collinear points verifying the inequalities of Lemma \ref{lem:criterion}, we will construct a sequence of collinear points $X_n$, $Y_n$, $Z_n$ and $W_n$ that degenerate in a controlled way as $n \to \infty$ and such that the desired inequalities hold in the limit. We thus need to show that extremal length behaves well under mild degeneration.

\section{Convergence of extremal length under pinching} \label{sec:conv}

Let $R = \sqcup_{j \in J} R_j$ be a subsurface of $S$ obtained by cutting $S$ along a multicurve and possibly forgetting some of the pieces. Each connected component $R_j$ of $R$ is homeomorphic to a punctured surface $R_j'$. The Teichm\"uller space $\teich(R)$ is defined as the Cartesian product $\Pi_{j\in J} \teich(R_j')$. 

\begin{defn} \label{def:conv}
Let $X_n \in \teich(S)$ and $Y \in \teich(R)$. We say that \emph{$X_n$ converges conformally to $Y$ as $n\to \infty$} if there exist nested surfaces $Y_n \subset Y$ exhausting $Y$ and $K_n$--quasiconformal embeddings $f_n:Y_n \to X_n$ homotopic to the inclusion map $R \subset S$ such that $K_n \to 1$ as $n\to \infty$.
\end{defn}

We emphasize that in this definition, the ends of $Y$ are all required to be punctures. Informally speaking, $X_n$ converges conformally to $Y$ if there is a multicurve in $X_n$ that gets pinched and in the process, some pieces survive to form $Y$. We don't care about the other pieces, meaning that they don't need to stabilize as $n \to \infty$. Thus conformal convergence in the above sense is more general than convergence in the augmented Teichm\"uller space, where every piece is required to stabilize \cite{Abikoff}.

There are different equivalent ways to formulate conformal convergence. We can say that $X_n$ converges conformally to $Y$ if 
\begin{itemize}
\item for every simple closed curve $\alpha$ in $R$ (including peripheral ones), the hyperbolic length of $\alpha$ in $X_n$ converges to the hyperbolic length of $\alpha$ in $Y$;
\item for every $j\in J$, the covering space of $X_n$ associated with the subsurface $R_j$, equipped with its hyperbolic metric, converges in the Gromov-Hausdorff topology to the corresponding component $Y_j$ of $Y$ with respect to some choices of basepoints;  
\item  for every $j\in J$, $\rho_n^j$ converges up to conjugacy to $\rho^j$, where $\rho_n:\pi_1(S) \to \psl(2,\RR)$ is the representation defining $X_n$, $\rho_n^j$ is its restriction to $\pi_1(R_j)$ coming from the inclusion $R_j \subset S$ and $\rho_j:\pi_1(R_j) \to \psl(2,\RR)$ is the representation defining $Y_j$.
\end{itemize}

For our purposes, the definition in terms of nearly conformal embeddings is the most convenient. The statement we want to prove is that conformal convergence implies convergence of extremal length for multicurves supported on the limiting surface. Extremal length on a disconnected surface is defined in the usual way, as the supremum of weighted length squared divided by area over all conformal metrics. A standard argument shows this equals the sum of the extremal lengths on connected components (cf. \cite[p. 55]{Ahlfors}).

\begin{lem} \label{lem:extadditive}
Let $Y = \sqcup_{j \in J} Y_j$ be a disjoint union of Riemann surfaces, let $\alpha$ be a multi\-curve on $Y$ and let $\alpha^j=\alpha \cap Y_j$. Then $$\el(\alpha, Y) = \sum_{j \in J} \el(\alpha^j, Y_j).$$
\end{lem}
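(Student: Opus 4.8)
The plan is to prove the two inequalities $\el(\alpha,Y)\ge \sum_j \el(\alpha^j,Y_j)$ and $\el(\alpha,Y)\le \sum_j \el(\alpha^j,Y_j)$ separately, working directly from the definition of extremal length as a sup over conformal metrics. Recall that a multicurve $\alpha$ on $Y=\sqcup_j Y_j$ decomposes as a weighted sum whose components each lie in exactly one $Y_j$, so $\alpha=\sum_j \alpha^j$ with $\alpha^j$ a multicurve (possibly empty) on $Y_j$, and $\ell_\rho(\alpha)=\sum_j \ell_{\rho|_{Y_j}}(\alpha^j)$ and $\area(\rho)=\sum_j \area(\rho|_{Y_j})$ for any conformal metric $\rho$ on $Y$.

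For the lower bound, I would fix for each $j$ a near-optimal conformal metric $\rho_j$ on $Y_j$ of finite positive area and, after scaling each $\rho_j$ by a positive constant $\lambda_j$ to be chosen, glue them to a single conformal metric $\rho$ on $Y$. Then $\ell_\rho(\alpha)=\sum_j \lambda_j\ell_{\rho_j}(\alpha^j)$ and $\area(\rho)=\sum_j \lambda_j^2\area(\rho_j)$, so the ratio $\ell_\rho(\alpha)^2/\area(\rho)$ is a function of the $\lambda_j$ that I would like to maximize; by Cauchy--Schwarz (or Lagrange multipliers) the optimal choice is $\lambda_j$ proportional to $\ell_{\rho_j}(\alpha^j)/\area(\rho_j)$, and this choice yields $\ell_\rho(\alpha)^2/\area(\rho)=\sum_j \ell_{\rho_j}(\alpha^j)^2/\area(\rho_j)$. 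Taking $\rho_j$ to realize $\el(\alpha^j,Y_j)$ up to $\eps/|J|$ (or up to a multiplicative factor, if $J$ is infinite, splitting $\eps$ as a convergent series $\sum_j \eps_j$) gives $\el(\alpha,Y)\ge \sum_j \el(\alpha^j,Y_j)-\eps$; letting $\eps\to 0$ gives the lower bound. One should also handle the degenerate cases where some $\ell_{\rho_j}(\alpha^j)=0$ (then that component contributes $0$ and can be dropped) or where $\alpha^j$ is empty.

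For the upper bound, I would take an arbitrary conformal metric $\rho$ on $Y$ of finite positive area and restrict it to each $Y_j$, writing $a_j=\area(\rho|_{Y_j})$ and $L_j=\ell_{\rho|_{Y_j}}(\alpha^j)$. Then
\begin{equation*}
\frac{\ell_\rho(\alpha)^2}{\area(\rho)}=\frac{\left(\sum_j L_j\right)^2}{\sum_j a_j}.
\end{equation*}
By Cauchy--Schwarz in the form $\left(\sum_j L_j\right)^2=\left(\sum_j \tfrac{L_j}{\sqrt{a_j}}\cdot\sqrt{a_j}\right)^2\le \left(\sum_j \tfrac{L_j^2}{a_j}\right)\left(\sum_j a_j\right)$, this is at most $\sum_j L_j^2/a_j=\sum_j \ell_{\rho|_{Y_j}}(\alpha^j)^2/\area(\rho|_{Y_j})\le \sum_j \el(\alpha^j,Y_j)$, where the last step uses the definition of $\el(\alpha^j,Y_j)$ on each component. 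Taking the supremum over $\rho$ gives $\el(\alpha,Y)\le \sum_j \el(\alpha^j,Y_j)$.

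The main obstacle, and the only place requiring care, is the case of an infinite index set $J$ (which is allowed in principle here), where one must ensure convergence of the relevant series and that the glued metric $\rho$ in the lower bound still has finite positive area; distributing the error as $\sum_j \eps_j<\eps$ and truncating to finitely many components with non-negligible contribution handles this. If $J$ is finite all of this is immediate and the proof is just the two Cauchy--Schwarz computations above, which is why I would present it tersely with a pointer to \cite[p.~55]{Ahlfors}.
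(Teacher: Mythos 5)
Your proposal is correct and follows essentially the same route as the paper: the upper bound via restricting an arbitrary metric and applying Cauchy--Schwarz is identical, and your optimal scaling $\lambda_j\propto \ell_{\rho_j}(\alpha^j)/\area(\rho_j)$ in the lower bound is exactly the paper's normalization $\ell(\alpha^j,\rho_j)=\area(\rho_j)$. The only cosmetic difference is that the paper disposes of the degenerate components by first observing that $\el(\alpha,Y)$ equals the extremal length on the union of the components that $\alpha$ actually meets, whereas you fold this into the error bookkeeping; in the paper's setting $J$ is finite, so your worries about infinite index sets are unnecessary but harmless.
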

\begin{proof}
First observe that the extremal length of $\alpha$ on $Y$ is the same as its extremal length on the union $Z$ of the components which it intersects. Indeed, given a metric $\rho$ on $Y$, the ratio $\ell(\alpha,\rho)^2 / \area(\rho)$ does not decrease if we modify $\rho$ to be zero outside $Z$. This shows that $\el(\alpha,Y) \leq \el(\alpha, Z)$ and the reverse inequality follows by extending any conformal metric on $Z$ to be zero on $Y \setminus Z$. 

If $\alpha^j$ is empty, then clearly $\el(\alpha^j, Y_j)=0$. In proving the above formula, we may therefore assume that $\alpha^j$ is non-empty for each $j \in J$.

For each $j \in J$, let $\rho_j$ be any metric on $Y_j$ such that $\ell(\alpha^j, \rho_j)$ and $\area(\rho_j)$ are finite and positive. By rescaling $\rho_j$, we may assume that $\ell(\alpha^j, \rho_j)=\area(\rho_j)$. Let $\rho$ be the metric on $Y$ which is equal to $\rho_j$ on $Y_j$. Then 
$$\ell(\alpha,\rho)=\sum_{j \in J} \ell(\alpha^j, Y_j) = \sum_{j \in J} \area(\rho_j)=\area(\rho)$$
which implies that
$$
\el(\alpha, Y) \geq \frac{\ell(\alpha,\rho)^2}{\area(\rho)} = \sum_{j \in J} \frac{\ell(\alpha,\rho_j)^2}{\area(\rho_j)}.
$$
We can replace the right-hand side by its supremum over all non-degenerate metrics $\rho_j$ to get 
$$\el(\alpha, Y) \geq \sum_{j \in J} \el(\alpha^j, Y_j).$$

Conversely, let $\sigma$ be any conformal metric on $Y$ and let $\sigma_j$ be its restriction to $Y_j$. Then for each $j\in J$ we have
$$
 \el(\alpha^j, Y_j) \geq \frac{\ell(\alpha^j, \sigma_j)^2}{\area(\sigma_j)}.
$$ 
Summing over all $j$ yields 
$$
\sum_{j\in J}\el(\alpha^j, Y_j) \geq 
\sum_{j \in J} \frac{\ell(\alpha^j, \sigma_j)^2}{\area(\sigma_j)} \geq 
\frac{\left(\sum_{j \in J} \ell(\alpha^j, \sigma_j)\right)^2}{\sum_{j \in J} \area(\sigma_j)}
$$
where the second inequality follows from the Cauchy-Schwarz inequality. Finally, observe that $$\sum_{j \in J} \ell(\alpha^j, \sigma_j) = \ell(\alpha,\sigma) \quad \text{and} \quad \sum_{j \in J} \area(\sigma_j) = \area(\sigma)$$ so that
$$
\sum_{j\in J}\el(\alpha^j, Y_j) \geq \frac{\ell(\alpha,\sigma)^2}{\area(\sigma)}.
$$
Since the inequality holds for any conformal metric $\sigma$, it holds for the supremum as well and we have \begin{align*}
\sum_{j\in J}\el(\alpha^j, Y_j) &\geq \el(\alpha, Y). \qedhere
\end{align*}
\end{proof}

This lemma implies that equation \ref{eq:ELisinf}, which says that extremal length is the infimum of weighted sums of extremal lengths of embedded cylinders, still holds for disconnected surfaces. We use this in the proof of convergence of extremal length under degeneration.

\begin{thm} \label{thm:elconv}
Let $\alpha$ be a multicurve in $R$ and suppose that $X_n$ converges conformally to $Y$ as $n\to \infty$. Then $\el(\alpha, X_n) \to \el(\alpha, Y)$ as $n \to \infty$.
\end{thm}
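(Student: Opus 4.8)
The plan is to prove the two inequalities $\limsup_n \el(\alpha, X_n) \leq \el(\alpha, Y)$ and $\liminf_n \el(\alpha, X_n) \geq \el(\alpha, Y)$ separately, using the characterization of extremal length as an infimum of weighted sums of extremal lengths of disjointly embedded cylinders (equation \ref{eq:ELisinf}, extended to disconnected surfaces via Lemma \ref{lem:extadditive}). Recall that by hypothesis we have $K_n$-quasiconformal embeddings $f_n : Y_n \to X_n$ with $Y_n$ exhausting $Y$ and $K_n \to 1$, and that extremal length changes by at most a factor $K_n$ under a $K_n$-quasiconformal embedding.

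For the upper bound, I would fix $\eps > 0$ and choose, on $Y$, a collection of disjointly embedded cylinders $C_j$ homotopic to the components $\alpha_j$ with $\sum_j w_j^2 \el(C_j) \leq \el(\alpha, Y) + \eps$. Since the $C_j$ are compactly contained in $Y$ and the $Y_n$ exhaust $Y$, for $n$ large all the $C_j$ lie in $Y_n$, so $f_n$ maps them to disjointly embedded cylinders $f_n(C_j) \subset X_n$ in the correct homotopy classes, with $\el(f_n(C_j)) \leq K_n \el(C_j)$. Hence $\el(\alpha, X_n) \leq \sum_j w_j^2 \el(f_n(C_j)) \leq K_n (\el(\alpha,Y) + \eps)$, and letting $n \to \infty$ then $\eps \to 0$ gives the upper bound.

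For the lower bound the roles are reversed, and this is the step I expect to be the main obstacle, because $f_n$ only embeds $Y_n$ into $X_n$, not the other way around, so one cannot simply push optimal cylinders from $X_n$ back to $Y$. The fix is that the optimal cylinders realizing $\el(\alpha, X_n)$ can be taken in the image $f_n(Y_n)$ up to a controlled error: a cylinder homotopic to $\alpha_j$ in $X_n$ that wanders far outside $f_n(Y_n)$ must be long, hence have large modulus, i.e.\ small extremal length contribution — but we want a \emph{lower} bound on $\el$, so instead I would argue that the hyperbolic-geodesic representative of $\alpha_j$ in $X_n$ stays in $f_n(Y_n)$ for $n$ large (conformal convergence controls the thick part), and then the maximal embedded cylinder about it is also essentially contained in $f_n(Y_n)$, so that $f_n^{-1}$ carries a definite sub-cylinder of it back into $Y$. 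Concretely: let $D_j^n \subset X_n$ be the optimal cylinders for $\el(\alpha, X_n)$; show that for any $\eps$, for $n$ large a sub-cylinder $D_j^{n,\eps} \subset D_j^n \cap f_n(Y_n)$ has $\el(D_j^{n,\eps}) \leq (1+\eps)\el(D_j^n)$ (throwing away a collar of definite modulus near the ends changes the modulus, hence extremal length, by a factor close to $1$ since the modulus of $D_j^n$ is bounded below — which holds because $\el(\alpha, X_n)$ is bounded above by the already-proven upper bound). Then $f_n^{-1}(D_j^{n,\eps})$ are disjoint cylinders in $Y_n \subset Y$ homotopic to $\alpha_j$, with $\el(f_n^{-1}(D_j^{n,\eps})) \leq K_n \el(D_j^{n,\eps}) \leq K_n(1+\eps)\el(D_j^n)$, giving $\el(\alpha, Y) \leq K_n(1+\eps)\el(\alpha, X_n)$; let $n \to \infty$ and $\eps \to 0$.

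The one subtlety to handle carefully is uniformity: I need the modulus of each optimal cylinder $D_j^n$ in $X_n$ to be bounded below independently of $n$ (so that trimming a fixed collar costs a factor $\to 1$), which follows since $w_j^2 \el(D_j^n) \leq \el(\alpha, X_n) \leq \el(\alpha,Y) + 1$ for $n$ large; and I need the geodesic representatives to stay in the thick-part image $f_n(Y_n)$, which is where the equivalent formulation of conformal convergence in terms of hyperbolic lengths and Gromov–Hausdorff convergence of covers is convenient — a curve disjoint from $\alpha$ that bounds $D_j^n$ on each side has hyperbolic length converging to that of the corresponding curve in $Y$, so the relevant pair of curves persists and confines the cylinder. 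Once these two uniform facts are in place, the two inequalities close up and the theorem follows.
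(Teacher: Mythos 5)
Your upper bound is essentially the paper's argument, up to one small slip: the optimal cylinders $C_j \subset Y$ need not be compactly contained in $Y$ (their closures can run into the punctures, as happens for Jenkins--Strebel cylinders whose critical graph passes through a simple pole), so they need not lie in any $Y_n$ of the exhaustion. The fix is the one the paper uses: first pass to compactly contained subcylinders $A_j \subset C_j$ with $\el(A_j) \leq K\,\el(C_j)$ and push those forward by $f_n$.

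The lower bound, however, has a genuine gap at its crux. You take the optimal cylinders $D_j^n \subset X_n$ realizing $\el(\alpha,X_n)$ via equation \ref{eq:ELisinf}, claim they are ``essentially contained'' in $f_n(Y_n)$ up to collars of small relative modulus at their ends, and pull trimmed subcylinders back to $Y$. But these optimal cylinders are the cylinders of a Jenkins--Strebel differential on $X_n$, and they are typically enormous: for a single curve $\alpha=\alpha_1$ the extremal cylinder is the complement of the critical graph, a dense, full-measure subset of $X_n$, so it is never contained in $f_n(Y_n)$ and meets the degenerating part of $X_n$ for every $n$. More importantly, the portion of $D_j^n$ lying outside $f_n(Y_n)$ has no reason to be a union of collars near the two ends of the cylinder: segments of trajectories at arbitrary heights, or entire core trajectories, can wander across the pinching thin parts into the pieces of $X_n$ being discarded. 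To extract an essential subannulus of $D_j^n$ inside $f_n(Y_n)$ with modulus close to that of $D_j^n$ you would need the set of heights whose whole closed trajectory stays in $f_n(Y_n)$ to have nearly full measure, and that is essentially the statement being proved; it does not follow from the hyperbolic geodesic of $\alpha_j$ lying in the thick part (that controls the core curve, not the annulus, and the ``boundary'' of $D_j^n$ is the critical graph, not a pair of curves confining it to a subsurface). The paper avoids this direction altogether: for the lower bound it works with the supremum-over-metrics definition on $X_n$, taking the extremal metric $\rho=\sqrt{|q|}$ on $Y$, truncating it to $\rho_\delta$ (identically zero on a $\delta$--neighborhood of the punctures, at a cost in length that tends to $0$ with $\delta$), and pushing it forward by $f_n$ to a test metric $\rho_n$ on $X_n$ satisfying $\ell(\alpha,\rho_n)\geq\ell(\alpha,\rho_\delta)$ and $\area(\rho_n)\leq K\area(\rho)$, which yields $\el(\alpha,X_n)\geq K^{-2}\el(\alpha,Y)$ with no control whatsoever on the optimal cylinders of $X_n$. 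To complete your argument you must either prove the trajectory-containment statement directly or switch, as the paper does, to constructing a competitor metric on $X_n$.
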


\begin{proof}
Let $K>1$. We will show that if $n$ is large enough, then $$\frac{1}{K^2} \el(\alpha, Y) \leq \el(\alpha, X_n) \leq K^2 \el(\alpha, Y),$$
starting with the second inequality.

Write $\alpha$ as a weighted sum of simple closed curves $\sum_{i\in I} w_i \alpha_i$ and let $C = \sqcup_{i \in I} C_i$ be the collection of cylinders in $Y$ such that $\el(\alpha, Y) = \sum_{i\in I} w_i^2 \el(C_i)$. For each $i \in I$, let $A_i \subset C_i$ be a compactly contained essential cylinder (for example a straight subcylinder) such that $\el(A_i) \leq K \el(C_i)$. 

Let $Y_n$ be a nested exhaustion of $Y$ and let $f_n: Y_n \to X_n$ be quasiconformal embeddings as in Definition \ref{def:conv}. If $n$ is large enough, then $Y_n$ contains $\cup_{i \in I} A_i$ and $f_n$ is $K$--quasiconformal. Then by equation \ref{eq:ELisinf} we have
\begin{align*}
\el(\alpha, X_n) & \leq \sum_{i \in I} w_i^2 \el(f_n(A_i)) \\
 &\leq \sum_{i \in I}  w_i^2 K \el(A_i) \\
 & \leq K^2 \sum_{i \in I}  w_i^2  \el(C_i) = K^2 \el(\alpha, Y). 
\end{align*}

For the reverse inequality, let $\rho$ be the conformal metric realizing $\el(\alpha, Y)$. Our goal is to construct a good enough conformal metric on $X_n$ from $\rho$.

We may assume that $\alpha$ intersects every component of $Y$ (otherwise ignore the superfluous components). By the previous lemma and Renelt's theorem \cite{Renelt}, $\rho=\sqrt{|q|}$ for a holomorphic quadratic differential $q$ on $Y$ with at most simple poles at the punctures. Let $\overline Y$ be the completion of $Y$ in the metric $\rho$ and let $Q = \overline Y \setminus Y$ be the set of punctures of $Y$. Since $Q$ is finite, there exists a $\delta_0>0$ such that the $\delta_0$--balls around the points of $Q$ are embedded and pairwise disjoint. Here we are using the fact that $\rho$ has isolated singularities so that the distance it induces on $\overline Y$ via path integrals is compatible with the underlying topology.

Let $\mu = 2 \delta_0$. Then any homotopically non-trivial arc from $Q$ to itself has $\rho$--length at least $\mu$. For $\delta>0$, let $\neigh_\delta(Q)$ be the open $\delta$--neighborhood of $Q$ in the metric $\rho$ and let $\rho_\delta$ be the metric on $Y$ which agrees with $\rho$ outside of $\overline{\neigh_\delta(Q)}$ and is identically zero on $\overline{\neigh_\delta(Q)} \cap Y$. 

\begin{claim}
For every $\beta \in \scc(R)$ the length $\ell( \beta,\rho_\delta)$ converges to  $\ell(\beta, \rho)$ as $\delta \to 0$.
\end{claim}
\begin{proof}[Proof of claim]
It is clear that $\ell( \beta,\rho_\delta)\leq \ell( \beta,\rho)$ since $\rho_\delta \leq \rho$. We will show that 
$$
\ell( \beta,\rho) \leq \frac{\mu+\delta}{\mu-2\delta }\, \ell( \beta,\rho_\delta)
$$
whenever $\delta < \delta_0$.

Let $\gamma$ be a piecewise smooth curve  homotopic to $\beta$ on $Y$. Our goal is to find a curve $\wtilde \gamma$ homotopic to $\gamma$ such that 
$$
|\wtilde \gamma | \leq \frac{\mu+\delta}{\mu-2\delta }\, \left|\gamma \setminus \overline{\neigh_\delta(Q)}\right|
$$
where the length $|\cdot|$ is measured with respect to $\rho$.

We modify $\gamma$ in two steps, each time making it shorter outside $\overline{\neigh_\delta(Q)}$. We estimate its total length at the end. To avoid unnecessary notation, we denote the modified curve by $\gamma$ again instead of $\wtilde \gamma$. 

We may assume that each component of $\gamma\setminus \overline{\neigh_\delta(Q)}$ is homotopically non-trivial rel endpoints after collapsing each component of $\overline{\neigh_\delta(Q)}$. Otherwise, we can homotope those trivial subarcs to the boundary of $\overline{\neigh_\delta(Q)}$, which shortens $\gamma$. We can also assume that $\gamma \setminus \overline{\neigh_\delta(Q)}$ has only finitely many components, because each such component has length at least $\mu - 2 \delta$. If $\gamma \setminus \overline{\neigh_\delta(Q)}$ has infinite length, then there is nothing to show. Thus $\gamma \cap \overline{\neigh_\delta(Q)}$ has finitely many components as well. Each such component can be homotoped, keeping the endpoints fixed, to a path in $\overline{\neigh_\delta(Q)}$ of $\rho$--length at most $3\delta$. This does not change the length of the portion of $\gamma$ lying outside $\overline{\neigh_\delta(Q)}$.

If $\gamma$ is disjoint from $\overline{\neigh_\delta(Q)}$, then $|\gamma| = \left|\gamma \setminus \overline{\neigh_\delta(Q)}\right|$ and we are done.

Otherwise $\gamma$ breaks up into $\gamma\setminus \overline{\neigh_\delta(Q)}$ and $\gamma\cap \overline{\neigh_\delta(Q)}$ and these two sets have the same number of components. Let $\sigma$ be a component of $\gamma\setminus \overline{\neigh_\delta(Q)}$ and $\tau$ a component of $\gamma\cap \overline{\neigh_\delta(Q)}$. Then $|\sigma| \geq \mu - 2\delta$ and $|\tau| \leq 3 \delta$, so that
$$
|\sigma|+|\tau| \leq \frac{\mu + \delta}{ \mu - 2 \delta} \, |\sigma|.
$$
Adding these inequalities over distinct $\sigma$-$\tau$ pairs whose union is $\gamma$ yields 
$$
|\gamma| \leq \frac{\mu + \delta}{ \mu - 2 \delta}\, \left|\gamma \setminus \overline{\neigh_\delta(Q)} \right|,
$$ 
which shows that 
$$
\ell( \beta,\rho) \leq \frac{\mu+\delta}{\mu-2\delta }\,\left|\gamma \setminus \overline{\neigh_\delta(Q)} \right|.
$$
Since $\gamma$ was arbitrary, the right-hand side can be replaced with the infimum $\frac{\mu+\delta}{\mu-2\delta }\, \ell( \beta,\rho_\delta)$.
\end{proof}

The analogous result for multicurves follows by linearity. Thus there exists a $\delta>0$ such that 
$$
\ell(\alpha,\rho_\delta)^2 \geq \frac{1}{K} \ell(\alpha,\rho)^2.
$$
We fix such a $\delta$ for the rest of the proof.

Let $n$ be large enough so that $Y_n$ contains $Y \setminus \neigh_\delta(Q)$ and so that $f_n:Y_n \to X_n$ is $K$--quasiconformal. We may assume that $f_n$ is smooth except at finitely many points. 

We define a conformal metric $\rho_n$ on $X_n$ by $$\rho_n(v)= \max_{\theta \in [0,2\pi]} \rho(\mathrm{d}f_n^{-1}(e^{i\theta} v))$$
if $v$ is a tangent vector based at a point in $f_n(Y \setminus \neigh_\delta(Q))$ and $\rho_n(v)=0$ otherwise.

\begin{claim}
For every $\beta \in \scc(R)$, we have $\ell( \beta,\rho_n)\geq\ell(\beta, \rho_\delta)$.
\end{claim}
\begin{proof}[Proof of claim]
Let $\gamma$ be a curve homotopic to $\beta$ in $X_n$. If $\gamma$ is contained in the image $f_n(Y \setminus \neigh_\delta(Q))$, then

\begin{align*}
\int \rho_n(\gamma'(t)) \,|dt| &\geq \int \rho((\mathrm{d}f_n^{-1} (\gamma'(t))) \,|dt| \\
& =\int \rho((f_n^{-1}\circ \gamma)'(t))) \,|dt| \\
& \geq \ell(\beta, \rho) \\
& \geq \ell(\beta, \rho_\delta).
\end{align*}

Otherwise, we can homotope $\gamma$ to a curve $\wtilde \gamma$ which is not longer, yet is contained in $f_n(Y \setminus \neigh_\delta(Q))$. 
\end{proof}

Once again, the analogous result for multicurves on $R$ follows immediately.

\begin{claim}
The areas satisfy $\area(\rho_n) \leq K \area(\rho)$.
\end{claim}
\begin{proof}[Proof of claim]
Since $f_n$ is $K$--quasiconformal, we have
$$\max_{\theta \in [0,2\pi]} \rho(\mathrm{d}f_n^{-1}(e^{i\theta} v)) \leq K \min_{\theta \in [0,2\pi]} \rho(\mathrm{d}f_n^{-1}(e^{i\theta} v)).$$

This shows that $\rho_n^2 \leq K (f_n)_* \rho^2$ on $f_n(Y \setminus \neigh_\delta(Q))$, which means that
\begin{align*}
\area(\rho_n) &= \int_{X_n} \rho_n^2  \\ &\leq K \int_{f_n(Y \setminus \neigh_\delta(Q))} (f_n)_* \rho^2 \\ &= K \int_{Y \setminus \neigh_\delta(Q)} \rho^2  \\ &\leq  K \area(\rho).
\end{align*}
\end{proof}

Combining the above inequalities yields
$$
\el(\alpha,X_n) \geq \frac{\ell(\alpha,\rho_n)^2}{\area(\rho_n)} \geq \frac{1}{K} \frac{\ell(\alpha,\rho_\delta)^2}{\area(\rho)} \geq \frac{1}{K^2} \frac{\ell(\alpha,\rho)^2}{\area(\rho)} = \frac{1}{K^2} \el(\alpha, Y)
$$
for all large enough $n$, which is what we wanted to show.
\end{proof}

We now possess all the necessary tools to construct Teichm\"uller geode\-sics along which extremal length of a multicurve increases first and then decreases later.

\section{The examples} \label{sec:examples}

\subsection{The sphere with seven punctures} \label{sec:staircase}

Given non-negative lengths $l_1,\ldots,l_n$ and heights $h_1,\ldots,h_n$, consider the staircase-shaped polygon $P(l_1,h_1,\ldots,l_n,h_n) \subset \RR^2$ with $j$--th step of length $l_j$ and height $h_j$. This is illustrated in Figure \ref{fig:staircase} for $n=4$. We allow either $l_1$ or $h_n$ to be infinite, in which case $P$ is a horizintal or vertical half-infinite strip ending in a staircase. If all lengths and heights are finite, then we put the bottom-left corner of $P$ at the origin. 

\begin{figure}[htbp] 
\centering
\includegraphics{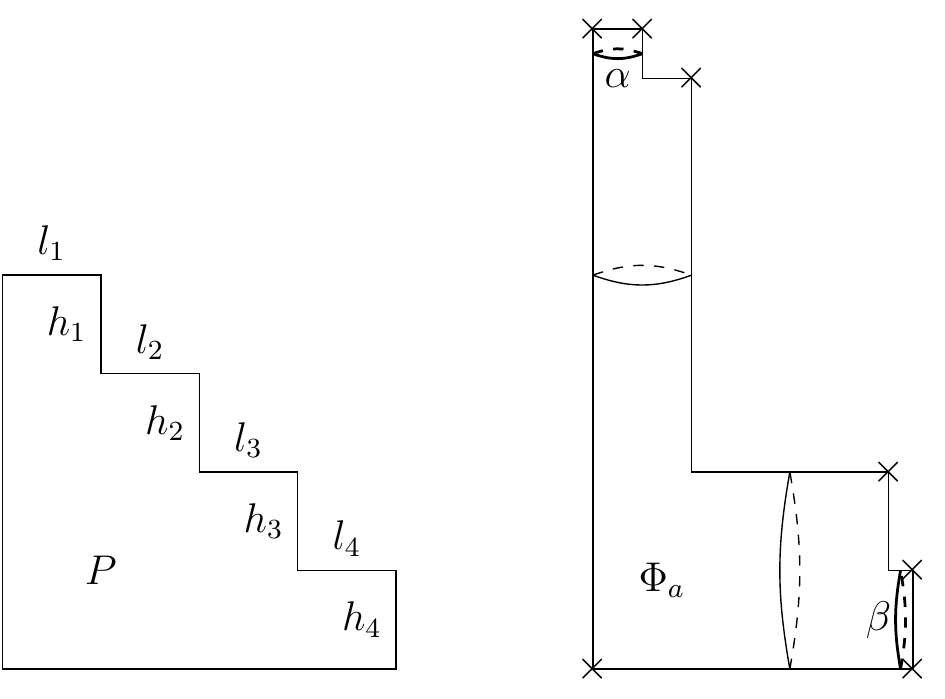}
 
\caption{The staircase-shaped polygon $P$ and the half-trans\-la\-tion sur\-face $\Phi_a$.}\label{fig:staircase}
\end{figure}

For $0<a<1$, let $$P_a=P(1,1,1,1/a^3,1/a^2,1/a,a,1/a)$$ and let $\overline \Phi_a$ be the double of $P_a$ across its boundary. More precisely, take $P_a$ and its image $P_a^*$ under a horizontal reflection $\sigma$ and glue them along their boundary using $\sigma$. It helps to think of $P_a$ as the front of $\overline \Phi_a$ and $P_a^*$ as the back. The sphere $\overline \Phi_a$ has $7$ cone points of angle $\pi$ (coming from the $7$ interior right angles in $P_a$), which we remove in order to get the half-translation surface $\Phi_a$.

Let $\alpha$ be the simple closed curve that separates the two highest punctures on $\Phi_a$ from the rest, i.e., the double of the middle horizontal line in the highest step of the staircase $P_a$. Similarly, let $\beta$ be the double of the middle vertical line in the step furthest to the right in $P_a$ (see Figure \ref{fig:staircase}). 

As we apply Teichm\"uller flow to the half-translation surface $\Phi_a$, the extremal length of $\alpha$ increases rapidly at first and then stays nearly constant for a long time. The extremal length of $\beta$ does the opposite: it remains nearly constant for a long time then decreases rapidly. Also, since $\alpha$ and $\beta$ are separated by cylinders of very large modulus, the extremal length of $\alpha+\beta$ is roughly equal to the sum of the individual extremal lengths. The net effect is that $\el(\alpha+\beta , \geod_t   \Phi_a)$ increases at first and decreases some time later. 

Let us be more precise. Consider the points
\begin{align*}
X_a &= \begin{pmatrix} 1/e & 0 \\ 0 & e \end{pmatrix}\Phi_a &  Y_a &= \Phi_a  \\
 Z_a &= \begin{pmatrix} 1/a & 0 \\ 0 & a \end{pmatrix}\Phi_a &    W_a &= \begin{pmatrix} e/a & 0 \\ 0 & a/e \end{pmatrix} \Phi_a.
\end{align*}
appearing at times $-1$, $0$, $\log(1/a)$ and $\log(1/a)+1$ along the Teichm\"uller line $t \mapsto \geod_t  \Phi_a$.

Observe that $Z_a = \tau  Y_a$ and $W_a = \tau  X_a$, where $\tau$ is the reflection about the line $y=x$ in the plane. Indeed, the definition of $P_a$ was arranged so that
$$\begin{pmatrix} 1/a & 0 \\ 0 & a \end{pmatrix}  P_a =P(1/a,a,1/a,1/a^2,1/a^3,1,1,1) = \tau  P_a.$$

We claim that from the point of view of the multicurve $\alpha + \beta$, the surfaces $X_a$, $Y_a$, $Z_a$ and $W_a$ all have conformal limits as $a\to 0$. Let $\Psi^\alpha$ be the double of $P(1,1,1,\infty)$ minus the $\pi$--angle singularities, let $\Psi^\beta$ be the double of $P(\infty,1,0,1)$ minus the three distinguished vertices, and let $\Psi=  \Psi^\alpha \sqcup \Psi^\beta$ (see Figure \ref{fig:limit}). We consider $\Psi$ as a half-translation surface with infinite area. The conformal limits of $X_a$, $Y_a$, $Z_a$ and $W_a$ as $a\to 0$ are $$X_0 = \geod_{-1} \Psi, \quad Y_0=\Psi, \quad Z_0 = \tau  \Psi \quad \mbox{and} \quad W_0 = \tau  \geod_{-1}  \Psi$$ respectively.

\begin{figure}[htbp] 
\centering
\includegraphics{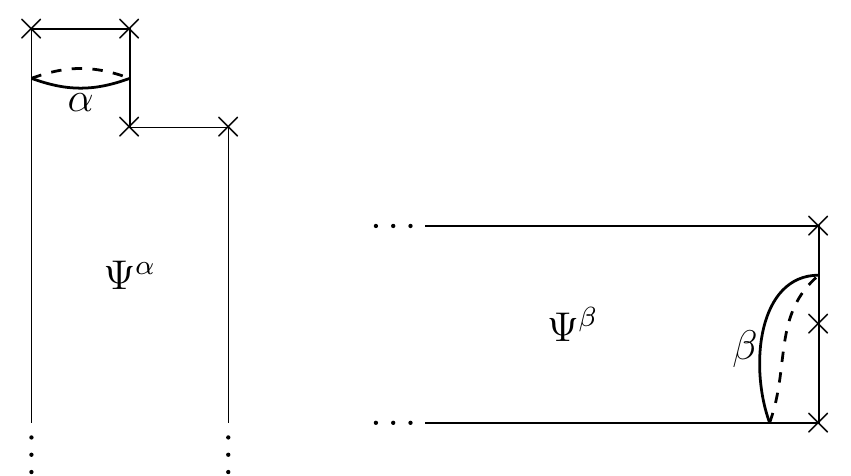}
 
\caption{The conformal limit $\Psi$ at time $t=0$. Observe that $\Psi^\beta$ is conformally invariant under Teichm\"uller flow.}\label{fig:limit}
\end{figure}

\begin{lem} \label{lem:surfconv}
For each $\Lambda\in \{X,Y,Z,W\}$, the surface $\Lambda_a$ converges conformally to $\Lambda_0$ as $a\to 0$.
\end{lem}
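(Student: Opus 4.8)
\textbf{Proof plan for Lemma \ref{lem:surfconv}.}

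The plan is to produce, for each $\Lambda \in \{X,Y,Z,W\}$ and each small $a$, an explicit nested exhaustion of $\Lambda_0$ by subsurfaces $\Lambda_n \subset \Lambda_0$ together with quasiconformal embeddings $f_a : \Lambda_n \to \Lambda_a$ whose dilatations tend to $1$ as $a \to 0$, as demanded by Definition \ref{def:conv}. Since $\flt(S)$ carries the $\gl^+(2,\RR)$--action and the four surfaces differ from one another only by applying the fixed linear maps $\geod_{-1}$ and $\tau$ (which are isometries of $\RR^2$ up to the bounded distortion of $\geod_{-1}$), and since conformal convergence is preserved under post-composition with a fixed quasiconformal map, it suffices to treat a single case, say $\Lambda = Y$, i.e.\ to show $\Phi_a \to \Psi$ conformally; the other three cases follow by applying $\geod_{-1}$ and/or $\tau$ to the embeddings. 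The heart of the matter is therefore to compare the staircase polygon $P_a = P(1,1,1,1/a^3,1/a^2,1/a,a,1/a)$ with the disjoint union of the two limiting strips $P(1,1,1,\infty)$ and $P(\infty,1,0,1)$.

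Here is how I would build the embedding. The polygon $P_a$ has a ``top-left'' region near the highest step (sides of lengths/heights $1,1,1,1/a^3$) which, as $a\to 0$, opens up into the half-infinite vertical strip $P(1,1,1,\infty)$ defining $\Psi^\alpha$; and a ``bottom-right'' region near the rightmost step (sides $1/a,a,1/a$) which opens up into the half-infinite horizontal strip $P(\infty,1,0,1)$ defining $\Psi^\beta$. These two regions of $P_a$ are joined by a long thin ``neck'': a roughly $1/a^2 \times 1/a$ rectangle (the second and third steps) whose modulus blows up like $1/a$ — this is precisely the large-modulus cylinder separating $\alpha$ from $\beta$ mentioned in the text. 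I would cut $P_a$ along two vertical (or horizontal) segments inside this neck, far from both ends, so that $P_a$ decomposes into a piece isometric to a large truncation of $P(1,1,1,\infty)$, a piece isometric to a large truncation of $P(\infty,1,0,1)$, and the middle neck rectangle. Doubling across the boundary, one gets corresponding truncations $\Psi^\alpha_n$ and $\Psi^\beta_n$ of $\Psi^\alpha$ and $\Psi^\beta$ that are literally \emph{isometrically} embedded in $\Phi_a$ (away from the neck), not merely quasiconformally. Choosing the truncation level $n = n(a) \to \infty$ as $a \to 0$ (e.g.\ cutting the neck at distance $\sqrt{1/a}$ from each shoulder) gives a nested exhaustion of $\Psi = \Psi^\alpha \sqcup \Psi^\beta$ together with embeddings $f_a$ that are isometric on this exhaustion; in particular $K_a = 1$, so certainly $K_a \to 1$. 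One must check that $f_a$ is homotopic to the inclusion $R \subset S$: this holds because the cut curves lie in the neck cylinder, which is homotopic to $\alpha$ (equivalently $\beta$), so the complementary pieces are attached to $\Phi_a$ in the expected isotopy class, and the $\pi$--cone points removed from $\overline\Phi_a$ correspond exactly to the interior right angles of $P_a$ that survive in the truncated strips.

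The main technical point — and the step I expect to be the real obstacle — is bookkeeping the geometry of $P_a$ carefully enough to verify two things: (i) that as $a \to 0$ the truncated staircase region near the highest step, \emph{in its intrinsic flat metric with the $\pi$--cone points deleted}, converges to the correct truncation of the doubled strip $\Psi^\alpha$ — one must make sure the finite step $1/a^3$ recedes to infinity and that no other feature of $P_a$ intrudes into the ball of a given radius around the relevant singularities; and similarly (ii) the analogous statement at the rightmost step for $\Psi^\beta$, keeping track of the fact that $\Psi^\beta$ is defined with three marked vertices removed (the ``$0$'' in $P(\infty,1,0,1)$ forces a degenerate step, and one must confirm the limiting surface is exactly the thrice-punctured object claimed and that it is $\geod_t$--invariant as asserted in Figure \ref{fig:limit}). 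Once the combinatorics of which sides of $P_a$ limit to which sides of the two strips is pinned down, the embeddings themselves are the identity on the nose, so there is no estimate to make — the content is entirely in the picture. Finally, for $\Lambda \in \{X,Z,W\}$ I would simply note that $X_a = \geod_{-1}\Phi_a$, $Z_a = \tau\Phi_a$, $W_a = \tau\geod_{-1}\Phi_a$, that $X_0,Z_0,W_0$ are the images of $Y_0 = \Psi$ under the \emph{same} maps, and that precomposing the exhaustion and postcomposing the embeddings $f_a$ by these fixed linear maps changes $K_a = 1$ to at most $\Dil(\geod_{-1})$... \emph{no} — better: since $\geod_{-1}$ and $\tau$ are the \emph{same} fixed linear maps applied to both source and target, they conjugate an isometric embedding $\Psi_n \to \Phi_a$ to an isometric (for $\tau$) or conformally-equivalent embedding $\Lambda_n \to \Lambda_a$, so $K_a$ stays equal to $1$ and conformal convergence is immediate; the homotopy condition is likewise transported by the same maps.
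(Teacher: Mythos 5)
Your plan for the $\Psi^\alpha$ side matches the paper (large truncations of the doubled $P(1,1,1,\infty)$ really do embed isometrically into $\Phi_a$, vertices to vertices), but the claim that the $\Psi^\beta$ side also admits isometric embeddings with $K_a=1$ is false, and this is exactly the point you flagged as ``the real obstacle'' and then dismissed. First, at the level of gross scale, the right-hand portion of $P_a$ has heights $1/a,1/a$, while $P(\infty,1,0,1)$ has heights $1,1$; one must first apply the homothety by $a$ (conformal, so harmless, but it already rules out ``identity on the nose''). More seriously, even after rescaling, the relevant portion of $Q_a=aP_a$ has a step of length $a^2>0$, i.e.\ a $\pi/2$ corner and a $3\pi/2$ corner at distance $a^2$, which after doubling give a puncture of cone angle $\pi$ and a cone point of angle $3\pi$; in the limit surface $\Psi^\beta$ these coalesce into a single \emph{smooth} marked point (the degenerate step doubles to a regular flat point). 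Any exhaustion of $\Psi^\beta$ must eventually contain a definite-size punctured flat disk around that middle puncture, and such a disk admits no locally isometric embedding into $\Phi_a$ in the homotopy class of the inclusion: the holonomy of the flat structure around the middle puncture of $\Psi^\beta$ is trivial, whereas the holonomy around the corresponding puncture of $\Phi_a$ is a rotation by $\pi$ (equivalently, Gauss--Bonnet forces the image disk to swallow cone points it cannot contain). So $K_a=1$ is unattainable; the whole content of the lemma on this side is an honest quasiconformal estimate. The paper produces it by comparing the Riemann maps of $R_a=P(\infty,1,a^2,1)$ and $R_0=P(\infty,1,0,1)$: Rad\'o's theorem on uniform convergence of Riemann mappings under uniform convergence of the boundary Jordan curves gives $f_a\to\mathrm{id}$, a piecewise-linear shear $g_a$ realigns the marked point $i$ with dilatation $\to 1$, and a truncation of $f_a\circ g_a$ is then followed by an isometric inclusion into $Q_a$. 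Some such analytic input is unavoidable in your plan as well.

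A secondary gap is the reduction of $X,Z,W$ to $Y$. The principle you invoke first (``conformal convergence is preserved under post-composition with a fixed quasiconformal map'') is false, since post-composing $K_n$--quasiconformal embeddings with a fixed $K_0$--quasiconformal map only gives $K_nK_0\not\to 1$. Your corrected version, conjugating by the same linear map on source and target, does preserve dilatation when the embeddings are translations/inclusions or when the conjugating map is a Euclidean isometry (so the $\tau$ cases $Z_a=\tau Y_a$, $W_a=\tau X_a$ are fine), but conjugation by $\geod_{-1}$ does \emph{not} preserve the dilatation of a general quasiconformal map (it can multiply it by as much as $e^{4}$, e.g.\ it destroys conformality of a rotation), and once the $\Psi^\beta$-side embeddings are genuinely quasiconformal this matters. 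The paper sidesteps this by simply rerunning the argument at each of the four times, using that $\geod_{-1}$ of the polygon inclusions are again inclusions and that the Rad\'o argument works verbatim for the stretched polygons; your reduction should either do the same or restrict the conjugation trick to maps for which it is actually dilatation-preserving.
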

\begin{proof}
We prove that $Y_a=\Phi_a$ converges conformally to $Y_0=\Psi$ as $a \to 0$. The proof of conformal convergence at other times is similar.

It is clear that the top half of $P_a$ converges to $P(1,1,1,\infty)$ as $a \to 0$. Indeed, for any $L>0$ the top portion of $P(1,1,1,\infty)$ of height $L$ embeds isometrically into $P_a$ provided that $1/a^2 > L$. Moreover, the isometric embedding maps vertices to vertices. By doubling this embedding, we obtain a conformal embedding of a large portion of $\Psi^\alpha$ into $\Phi_a$.

If we apply an homothety of factor $a$ to $P_a$, we get the polygon $$Q_a=P(a,a,a,1/a^2,1/a,1,a^2,1).$$  We will show that the right half of $Q_a$ converges conformally to the unbounded polygon $P(\infty,1,0,1)$. For this, we will use a theorem of Rad\'o which says that if a sequence of parametrized Jordan curves $\gamma_n: S^1 \to \CHAT$ converges uniformly to a Jordan curve $\gamma_\infty: S^1 \to \CHAT$, then the corresponding (appropriately  normalized) Riemann maps converge uniformly on the closed unit disk \cite[p.26]{Pommerenke} \cite[p.59]{Goluzin}.

For $a\geq 0$, let $R_a=P(\infty,1,a^2,1)$. For concreteness, we take the finite vertices of $R_a$ to be located at $0$, $i$, $-a^2+i$ and $-a^2 + 2i$ in $\CC$. Let $\gamma_a: \RR\cup \{\infty\} \to \CHAT$ be the Jordan curve $\partial R_a \cup \{\infty\}$ parametrized counter-clockwise by arclength with $\gamma_a(0)=0$ and $\gamma_a(\infty)=\infty$. Then $\gamma_a$ converges uniformly to $\gamma_0$ as $a \to 0$.

Let $f_a : R_0 \to R_a$ be the unique conformal homeomorphism such that $f_a(0)=0$, $f_a(2i) = -a^2 + 2i$ and $f_a(\infty)=\infty$. Then $f_a$ converges uniformly to the identity by Rad\'o's theorem. In particular, $f_a^{-1}(i)=i v_a$ converges to $i$ as $a \to 0$. Define  $g_a: R_0 \to R_0$ by

$$
g_a(x+iy) = \begin{cases} x+ i v_a y & \mbox{if } y \in [0,1] \\ x +i ((2-v_a)(y-1)+v_a) & \mbox{if } y\in (1,2].  \end{cases}
$$
This map is piecewise linear, fixes $0$, $2i$ and $\infty$ and sends $i$ to $f_a(i)$. The quasiconformal dilatation of $g_a$ is equal to $\max \left\{ v_a, \frac{1}{v_a}, 2-v_a, \frac{1}{2-v_a} \right\}$, which tends to $1$ as $a \to 0$. Thus when $a$ is small, $f_a\circ g_a$ is a nearly conformal homeomorphism from $R_0$ to $R_a$ taking $0$, $i$, $2i$ and $\infty$ to $0$, $i$, $-a^2+2i$ and $\infty$ respectively.

Let $L<0$ and let $C = \{ z \in R_0  \mid \re z > L \}$. The images $f_a \circ g_a(C)$ stay bounded away from $\infty$ since $f_a \circ g_a$ converges uniformly to the identity. Let's say that $\re z \geq u$ for all  $z \in f_a \circ g_a(C)$ and all $a\geq 0$. Then $f_a \circ g_a(C)$ embeds isometrically in $Q_a$ in the obvious way provided that $1/a > |u|$. By doubling all these objects and maps, we obtain a quasiconformal embedding of the double of $C$ into $\Phi_a$ with dilatation arbitrarily close to $1$ when $a$ is small.      

\end{proof}

It only remains to prove estimates for the extremal length on these limiting surfaces.

\begin{lem} \label{lem:bound1}
$\el(\alpha, X_0)$ and $\el(\beta, W_0)$ are bounded above by  $2/e^{2}.$ 
\end{lem}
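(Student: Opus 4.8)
The plan is to bound both extremal lengths from above by the extremal length of an explicitly embedded Euclidean cylinder, using the two facts recalled in Section \ref{sec:prelim}: extremal length does not increase under conformal embeddings, and an upright Euclidean cylinder of circumference $c$ and height $h$ has extremal length $c/h$. I will also use repeatedly that the affine map $\geod_t=\begin{pmatrix} e^t & 0 \\ 0 & e^{-t}\end{pmatrix}$ multiplies horizontal lengths by $e^t$ and vertical lengths by $e^{-t}$, and that by Lemma \ref{lem:extadditive} the extremal length of $\alpha$ (resp. $\beta$) on the disconnected surface $X_0$ (resp. $W_0$) equals its extremal length on the component $\geod_{-1}\Psi^\alpha$ (resp. $\tau\geod_{-1}\Psi^\beta$), since $\alpha$ meets only $\Psi^\alpha$ and $\beta$ meets only $\Psi^\beta$.

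For $\el(\alpha,X_0)$: in $\Psi^\alpha$, the double of $P(1,1,1,\infty)$, the highest step is a half-infinite Euclidean strip of width $1$ whose two vertical sides lie on the boundary of the polygon, so its double is a half-infinite flat cylinder of circumference $2$ whose core curves are horizontal and freely homotopic to $\alpha$. Let $C\subset\Psi^\alpha$ be a straight subcylinder of this one, of circumference $2$ and height $1$, sitting well inside the step. Its image $\geod_{-1}C\subset\geod_{-1}\Psi^\alpha$ is again a flat cylinder homotopic to $\alpha$, now of circumference $2e^{-1}$ (the core curves being horizontal) and height $e$. Therefore
$$
\el(\alpha,X_0)=\el(\alpha,\geod_{-1}\Psi^\alpha)\le\el(\geod_{-1}C)=\frac{2e^{-1}}{e}=\frac{2}{e^2}.
$$

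For $\el(\beta,W_0)$: the argument is the mirror image of the previous one. Since $W_a=\tau X_a$ and $Z_a=\tau Y_a$, and the reflection $\tau$ about the line $y=x$ interchanges the horizontal and vertical directions (and reverses the time parameter), the configuration around $(\beta,W_0)$ is the $\tau$--image of the one around $(\alpha,X_0)$: inside $\tau\geod_{-1}\Psi^\beta$ one finds, exactly as above, a flat cylinder homotopic to $\beta$ of circumference $2e^{-1}$ and height $e$, built from the half-infinite strip contained in $\Psi^\beta$ (the double of $P(\infty,1,0,1)$), scaled by $\geod_{-1}$ and reflected by $\tau$. Hence $\el(\beta,W_0)\le 2/e^2$ as well. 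Alternatively, since $\Psi^\beta$ is conformally invariant under the Teichm\"uller flow (see Figure \ref{fig:limit}), one has $\tau\geod_{-1}\Psi^\beta\cong\tau\Psi^\beta$ and may run the cylinder estimate there.

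The two extremal-length inequalities are immediate once the cylinders are named; the only real work---and the step where one must be careful---is the geometric bookkeeping: verifying that the chosen subcylinders genuinely embed (rather than merely immerse) in the relevant limit surface and lie in the homotopy class of $\alpha$ (resp. $\beta$), which amounts to tracking which edges of the staircase polygons are identified under doubling and how $\geod_{-1}$, and $\tau$ in the second case, act on circumference versus height. I expect this bookkeeping, not any hard estimate, to be the main obstacle.
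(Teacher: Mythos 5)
Your bound on $\el(\alpha,X_0)$ is essentially the paper's argument (embed a flat cylinder of circumference $2$ and height $1$ homotopic to $\alpha$ in $\Psi^\alpha$, apply $\geod_{-1}$ to get circumference $2/e$ and height $e$, and use monotonicity of extremal length under conformal embedding), but your description of the geometry is off: the highest step of $P(1,1,1,\infty)$ is the top $1\times 1$ square, not a half-infinite strip of width $1$; the half-infinite part lies below it and has width $2$. If the top step really were a half-infinite strip you could take cylinders of arbitrary height and conclude $\el(\alpha,\Psi^\alpha)=0$, contradicting Lemma \ref{lem:bound2}. The cylinder you need does exist --- it is the double of the open top square --- so the inequality survives, but only because the correct object happens to have the circumference and height you quoted.

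The $\beta$ half has a genuine gap. The component of $W_0$ containing $\beta$ is $\tau\geod_{-1}\Psi^\alpha$, not $\tau\geod_{-1}\Psi^\beta$: the whole point of the symmetry $\tau P_a=\geod_{\log(1/a)}P_a$ is that the roles of the two limit pieces swap, so at times $\log(1/a)$ and $\log(1/a)+1$ the curve $\beta$ lives on the reflected copy of $\Psi^\alpha$ (this is exactly how Lemmas \ref{lem:bound2} and \ref{lem:bound3} assign components). Your proposed cylinder ``built from the half-infinite strip contained in $\Psi^\beta$'' is doubly wrong: it sits in the wrong component, and the core of the half-infinite cylindrical end of $\Psi^\beta$ is homotopic to the puncture at infinity, not to $\beta$. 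Worse, your fallback via the conformal invariance of $\Psi^\beta$ under the flow would, if it computed $\el(\beta,W_0)$ at all, give $\el(\beta,W_0)=\el(\beta,Z_0)\geq 2/3$ by Lemma \ref{lem:bound2} --- the opposite of what is needed; that invariance is the reason $\el(\beta,X_0)=\el(\beta,Y_0)$ in Lemma \ref{lem:bound3}, not a tool here. You in fact state the correct mechanism (``the configuration around $(\beta,W_0)$ is the $\tau$--image of the one around $(\alpha,X_0)$'') and should simply use it as the paper does: $\tau$ is an anti-conformal homeomorphism from $X_0$ onto $W_0$ carrying $\alpha$ to $\beta$, hence $\el(\beta,W_0)=\el(\alpha,X_0)\leq 2/e^2$ with no second cylinder computation required.
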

\begin{proof}
Recall that the component of $X_0$ containing $\alpha$ is $\geod_{-1} \Psi^\alpha$. Take the top $1\times 1$ square in $P(1,1,1,\infty)$ without its horizontal sides. Its double is an open Euclidean cylinder of circumference $2$ and height $1$ homotopic to $\alpha$ in $\Psi^\alpha$. This cylinder gets stretched to one of circumference $2/e$ and height $e$, hence extremal length $2/e^2$, under the map $\geod_{-1}$. The inequality $\el(\alpha, X_0)\leq 2/e^2$ thus follows from the monotonicity of extremal length under conformal embeddings. 

Now, the reflection $\tau$ maps $X_0$ anti-conformally onto $W_0$ and sends $\alpha$ to $\beta$ so that $\el(\alpha, X_0)=\el(\beta, W_0)$.
\end{proof}

\begin{lem} \label{lem:bound2}
$\el(\alpha,Y_0)$ and $\el(\beta,Z_0)$ are bounded below by $2/3.$ 
\end{lem}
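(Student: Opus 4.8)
The plan is to reduce both assertions to the single inequality $\el(\alpha,\Psi^\alpha)\ge 2/3$ and then to establish that by comparing $\alpha$ with a curve it is forced to cross.

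First I would dispose of the two surfaces simultaneously. Since $\alpha$ is disjoint from $\Psi^\beta$ and is contained in $\Psi^\alpha$, Lemma~\ref{lem:extadditive} gives $\el(\alpha,Y_0)=\el(\alpha,\Psi)=\el(\alpha,\Psi^\alpha)$. The reflection $\tau$ about the line $y=x$ carries $\alpha$ to $\beta$ and $Y_0=\Psi$ anticonformally onto $Z_0=\tau\Psi$, with $\beta=\tau(\alpha)$ contained in $\tau(\Psi^\alpha)$; as extremal length is invariant under anticonformal homeomorphisms, $\el(\beta,Z_0)=\el(\alpha,Y_0)$. So it is enough to prove $\el(\alpha,\Psi^\alpha)\ge 2/3$.

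For this I would invoke the length--area estimate $\el(\gamma_1,X)\,\el(\gamma_2,X)\ge i(\gamma_1,\gamma_2)^2$, valid for any simple closed curves $\gamma_1,\gamma_2$ on any Riemann surface $X$: evaluating $\el(\gamma_2,X)$ against the flat metric of the single-cylinder Jenkins--Strebel differential for $\gamma_1$ (whose cylinder has height $1$ and total area $\el(\gamma_1,X)$), one uses that every representative of $\gamma_2$ must cross that cylinder at least $i(\gamma_1,\gamma_2)$ times. The surface $\Psi^\alpha$ is a four-times punctured sphere: its punctures are the three $\pi$--cone points of $P(1,1,1,\infty)$ — the two corners $p,q$ of the unit square that are farthest from the open end, together with the remaining $\pi$--corner $r$ — and the puncture $s$ at the infinite end, and $\alpha$ separates $\{p,q\}$ from $\{r,s\}$. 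Let $\delta$ be the simple closed curve in $\Psi^\alpha$ separating $\{p,s\}$ from $\{q,r\}$; then $i(\alpha,\delta)=2$, so the estimate yields $\el(\alpha,\Psi^\alpha)\ge 4/\el(\delta,\Psi^\alpha)$, and it remains to exhibit a conformally embedded cylinder homotopic to $\delta$ of modulus at least $1/6$, which forces $\el(\delta,\Psi^\alpha)\le 6$.

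Such a cylinder is the double of a tube in $P(1,1,1,\infty)$ that runs from the top edge of the unit square, straight down through the square and around the reflex corner, and then horizontally out along the half-strip to its right edge; doubling the two ends of the tube (which lie on $\partial P(1,1,1,\infty)$) produces a Euclidean cylinder, with at worst a cone point of angle $3\pi$ on it coming from the reflex corner, whose core is homotopic to $\delta$. Choosing the tube as wide as the width of the square permits, its circumference is roughly twice the arc-length from $p$'s side of the top edge to $r$'s side of the half-strip and its height is the width of the tube, and I would check that the ratio exceeds $1/6$ with room to spare (the $3\pi$ cone point only helps). Carrying out this last modulus estimate — keeping the tube wide enough while staying inside $P(1,1,1,\infty)$ and clear of the punctures — is the delicate point and where I expect the real work to lie; it can be done by hand with an explicit (possibly bent) rectangular tube, or, if the margins turn out to be tight, with a Schwarz--Christoffel computation. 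An alternative that avoids intersection numbers is to bound $\el(\alpha,\Psi^\alpha)$ from below directly, using the flat metric on the double of $P(1,1,1,\infty)\cap\{\im z\ge -1\}$, of total area $2(1+2)=6$, extended by zero over the remaining half-infinite cylinder of circumference $4$ at $s$: the flat geodesic representing $\alpha$ is the doubled horizontal segment through the middle of the square, of length $2$, so that $\el(\alpha,\Psi^\alpha)\ge 2^2/6=2/3$, the crux now being to show that no representative of $\alpha$ can do better by detouring through the zero-metric end.
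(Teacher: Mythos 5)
Your reduction to the single inequality $\el(\alpha,\Psi^\alpha)\ge 2/3$ (via Lemma \ref{lem:extadditive} and the anti-conformal symmetry $\tau$) is correct and matches the paper. But both of your routes stop exactly at the step that constitutes the proof. In the primary route, the needed input is $\el(\delta,\Psi^\alpha)\le 6$, i.e.\ an embedded annulus homotopic to $\delta$ of modulus at least $1/6$, and you offer no construction that achieves this. The margin here is not comfortable: the same inequality $\el(\alpha)\el(\delta)\ge i(\alpha,\delta)^2=4$ you invoke forces $\el(\delta)\ge 4/\el(\alpha)$, so your method can only work if that inequality is close to sharp for this pair, which there is no reason to expect. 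Concretely, any arc from the top edge of the unit square to the right edge of the half-strip must pass around the reflex corner and has length at least $2$, so a thickened-arc tube of width $w$ doubles to a cylinder of modulus at most $w/4$; you would need $w\ge 2/3$, and it is far from clear such a tube embeds (or that $\el(\delta)\le 6$ is even true --- a length--area estimate with the Euclidean metric on the doubled region of height $2$ already gives $\el(\delta)\ge 8/3$, and pushing such estimates could well exceed $6$). As it stands this route is not a proof and may not be repairable.

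Your alternative route is the paper's argument, and the point you defer --- ``no representative of $\alpha$ can do better by detouring through the zero-metric end'' --- is the entire content of the lemma. The paper closes it as follows. Normalize so the top edge of the square is at height $0$; the test metric is supported on the region $T$ of heights $[-2,0]$ (area $6$). Any curve $\gamma$ homotopic to $\alpha$ must cross the seam of the double joining the two top punctures, which lies at height at least $-1$; call that crossing point $q$. If $\gamma$ leaves $T$, it contains a point $p$ at height below $-2$, hence it contains two essentially disjoint subarcs inside $T$ (from $q$ down to the bottom of $T$ and back up to $q$), each of vertical extent at least $1$, so $\ell_\rho(\gamma)\ge 2$; if $\gamma$ stays in $T$ it must cross the left seam as well and the same bound holds. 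This yields $\el(\alpha,\Psi^\alpha)\ge 2^2/6=2/3$. You should supply this seam-crossing argument (or an equivalent one) to complete the proof.
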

\begin{proof}
Take $\rho$ to be the Euclidean metric on the top part $T$ of height $2$ in the component of $Y_0$ containing $\alpha$ (this is a union of $6$ unit squares, $3$ in the front, $3$ in the back) extended to be identically zero elsewhere. Then $\rho$ has area $6$. Moreover, any curve $\gamma$ homotopic to $\alpha$ on $Y_0$ has length at least $2$ in the metric $\rho$. If $\gamma$ is not contained in $T$, then some point $p$ on $\gamma$ is at height less than $-2$. But some point $q$ on $\gamma$ has to be at height at least $-1$ since it is homotopic to $\alpha$ (it has to cross the seam between the front and back of $Y_0$ joining the two punctures on the top right). In this case, the length of $\gamma$ is at least twice the height difference between $q$ and the bottom of $T$ (because there is a subarc from $p$ to $q$ then from $q$ to $p$), i.e., at least $2$. A similar argument also applies if $\gamma$ is contained in $T$ (it then has to cross the left seam in addition to the other one). We conclude that the extremal length of $\alpha$ on $Y_0$ is at least $2^2 / 6 = 2/3$.

The extremal length of $\beta$ on $Z_0$ is the same as the extremal length of $\alpha$ on $Y_0$ by symmetry.
\end{proof}

\begin{lem} \label{lem:bound3}
We have $\el(\beta, X_0) = \el(\beta, Y_0)$ and $\el(\alpha,Z_0)=\el(\alpha,W_0)$. 
\end{lem}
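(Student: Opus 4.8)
The plan is to observe that each of the two identities records the same fact: the Teichm\"uller flow moves $\Psi^\beta$ (respectively $\tau\Psi^\beta$) by a Euclidean homothety, hence conformally trivially, so it leaves the extremal length of $\beta$ (respectively $\alpha$) unchanged. This is the conformal invariance already advertised in the caption of Figure \ref{fig:limit}.

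First I would reduce the second equality to the first using the reflection $\tau$ about the line $y=x$, exactly as in the proof of Lemma \ref{lem:bound1}. The map $\tau$ furnishes an anti-conformal homeomorphism carrying $Y_0$ onto $Z_0$ and $X_0$ onto $W_0$ while interchanging the homotopy classes $\alpha$ and $\beta$, which rests on the built-in symmetry $\tau P_a=\geod_{\log(1/a)}P_a$ from Section \ref{sec:staircase}. Since extremal length is preserved by (anti-)conformal homeomorphisms, $\el(\alpha,Z_0)=\el(\beta,Y_0)$ and $\el(\alpha,W_0)=\el(\beta,X_0)$, so the identity $\el(\alpha,Z_0)=\el(\alpha,W_0)$ is equivalent to $\el(\beta,X_0)=\el(\beta,Y_0)$.

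To prove the latter, I would first discard the irrelevant component. The curve $\beta$ is supported on $\Psi^\beta\subset Y_0$ and on $\geod_{-1}\Psi^\beta\subset X_0$, so Lemma \ref{lem:extadditive} gives $\el(\beta,Y_0)=\el(\beta,\Psi^\beta)$ and $\el(\beta,X_0)=\el(\beta,\geod_{-1}\Psi^\beta)$, and it remains to see that these two are equal. Now $\geod_{-1}$ maps the polygon $P(\infty,1,0,1)$ to $P(\infty,e,0,e)=e\cdot P(\infty,1,0,1)$, because the infinite side length and the zero side length are fixed by the diagonal scaling while the two unit heights are multiplied by $e$. Doubling, the homothety $z\mapsto z/e$ descends to a conformal homeomorphism $\geod_{-1}\Psi^\beta\to\Psi^\beta$; it takes vertical lines to vertical lines and therefore preserves the homotopy class $\beta$. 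As extremal length is a conformal invariant, $\el(\beta,\geod_{-1}\Psi^\beta)=\el(\beta,\Psi^\beta)$, which gives the first identity and, by the reduction above, the second as well.

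The one place that calls for care is the bookkeeping in the first step: one must check that $\tau$ genuinely swaps the classes $\alpha$ and $\beta$ and matches $X_0,Y_0,Z_0,W_0$ with $W_0,Z_0,Y_0,X_0$, and that the homothety in the last step is compatible with the markings. Both are immediate from the explicit model of $\Psi^\beta$ as a half-infinite strip with a slit and from the symmetry of the staircase $P_a$; there is no analytic content.
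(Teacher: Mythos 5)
Your proof is correct and takes essentially the same approach as the paper: the heart of both arguments is that $\geod_{-1}$ carries $P(\infty,1,0,1)$ to $P(\infty,e,0,e)=e\cdot P(\infty,1,0,1)$, so after doubling the component carrying the curve is conformally invariant under the Teichm\"uller flow. The only cosmetic difference is that you deduce the second identity from the first via the reflection $\tau$ (a symmetry the paper itself uses in Lemmas \ref{lem:bound1} and \ref{lem:bound2}), whereas the paper applies the same homothety observation directly to the component of $Z_0$ containing $\alpha$.
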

\begin{proof}
The equality $\el(\beta, X_0) = \el(\beta, Y_0)$ is due to the fact that the component $\geod_{-1}\psi^\beta $ of $X_0$ containing $\beta$ is conformally equivalent to the corresponding component $\Psi^\beta$ of $Y_0$. Indeed, recall that $\Psi^\beta$ is the double of $P(\infty,1,0,1)$. The image of the latter by $\geod_{-1}$ is $P(\infty,e,0,e)$ which is homothetic to the first polygon by a factor $e$. This homothety doubles to a conformal isomorphism between $\Psi^\beta$ and $\geod_{-1}\Psi^\beta$ preserving the marked points and the curve $\beta$.

Similarly, $\el(\alpha,Z_0)=\el(\alpha,W_0)$ since the connected component of $Z_0$ containing $\alpha$ is conformally invariant under Teichm\"uller flow. 
\end{proof}

These are all the ingredients we need to prove the desired behavior for the extremal length of $\alpha + \beta$ along the geodesic $\geod_t  \Phi_a$.

\begin{thm}
If $a$ is small enough, then $$\el(\alpha+\beta, X_a)<\el(\alpha+\beta, Y_a) \quad \mbox{and} \quad \el(\alpha+\beta, Z_a)>\el(\alpha+\beta, W_a).$$
\end{thm}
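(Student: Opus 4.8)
The plan is to combine the convergence theorem (Theorem~\ref{thm:elconv}) with the explicit bounds of Lemmas~\ref{lem:bound1}, \ref{lem:bound2} and \ref{lem:bound3}. First I would use Lemma~\ref{lem:surfconv}, which says that each of $X_a, Y_a, Z_a, W_a$ converges conformally to the corresponding limit surface $X_0, Y_0, Z_0, W_0$ as $a \to 0$; applying Theorem~\ref{thm:elconv} to the multicurve $\alpha+\beta$ gives
$$
\el(\alpha+\beta, \Lambda_a) \longrightarrow \el(\alpha+\beta, \Lambda_0) \quad \text{for } \Lambda \in \{X,Y,Z,W\}
$$
as $a \to 0$. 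By Lemma~\ref{lem:extadditive}, the limiting extremal length splits as a sum over the two components $\Psi^\alpha$ and $\Psi^\beta$ of the limit surface; since $\alpha$ lives on the first component and $\beta$ on the second, we get $\el(\alpha+\beta, \Lambda_0) = \el(\alpha, \Lambda_0) + \el(\beta, \Lambda_0)$ in each case.

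So it suffices to check that the two strict inequalities hold for the limiting values, i.e.
$$
\el(\alpha, X_0) + \el(\beta, X_0) < \el(\alpha, Y_0) + \el(\beta, Y_0)
$$
and
$$
\el(\alpha, Z_0) + \el(\beta, Z_0) > \el(\alpha, W_0) + \el(\beta, W_0).
$$
By Lemma~\ref{lem:bound3} we have $\el(\beta, X_0) = \el(\beta, Y_0)$, so the first inequality reduces to $\el(\alpha, X_0) < \el(\alpha, Y_0)$, which follows from $\el(\alpha, X_0) \leq 2/e^2 < 2/3 \leq \el(\alpha, Y_0)$ by Lemmas~\ref{lem:bound1} and \ref{lem:bound2} (and the numerical fact $2/e^2 < 2/3$). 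Symmetrically, Lemma~\ref{lem:bound3} gives $\el(\alpha, Z_0) = \el(\alpha, W_0)$, so the second inequality reduces to $\el(\beta, Z_0) > \el(\beta, W_0)$, which again follows from $\el(\beta, W_0) \leq 2/e^2 < 2/3 \leq \el(\beta, Z_0)$. Thus both limiting inequalities hold with a definite gap.

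Finally, since strict inequalities between finitely many continuous quantities that converge are preserved for all sufficiently small parameter values, there exists $a_0 > 0$ such that for all $0 < a < a_0$ both desired inequalities $\el(\alpha+\beta, X_a) < \el(\alpha+\beta, Y_a)$ and $\el(\alpha+\beta, Z_a) > \el(\alpha+\beta, W_a)$ hold. I do not expect a genuine obstacle here: the only subtlety worth spelling out is that Theorem~\ref{thm:elconv} as stated applies to multicurves supported on the limiting (disconnected, infinite-area) surface, which is exactly the situation here since $\alpha+\beta$ is supported on $\Psi = \Psi^\alpha \sqcup \Psi^\beta$; one should also note explicitly that the convergence is used four times, once for each of the four points, and that the gap $2/3 - 2/e^2 > 0$ provides the uniform room needed to pass from the limit back to small $a$.
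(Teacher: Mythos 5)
Your proposal is correct and takes essentially the same route as the paper: conformal convergence (Lemma~\ref{lem:surfconv}) combined with Theorem~\ref{thm:elconv} gives $\el(\alpha+\beta,\Lambda_a)\to\el(\alpha+\beta,\Lambda_0)$, Lemma~\ref{lem:extadditive} splits each limit as $\el(\alpha,\Lambda_0)+\el(\beta,\Lambda_0)$, and the strict limiting inequalities follow from Lemmas~\ref{lem:bound1}, \ref{lem:bound2} and \ref{lem:bound3} together with $2/e^2<2/3$, with the gap carrying over to all sufficiently small $a$. Your reduction of each inequality to a single-curve comparison (cancelling $\el(\beta,\cdot)$ via $\el(\beta,X_0)=\el(\beta,Y_0)$, and $\el(\alpha,\cdot)$ via $\el(\alpha,Z_0)=\el(\alpha,W_0)$) is exactly the content of the paper's two displayed chains of inequalities, so nothing further is needed.
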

\begin{proof}
For each $\Lambda\in \{X,Y,Z,W\}$ we have that $$\el(\alpha + \beta , \Lambda_a) \to \el(\alpha + \beta , \Lambda_0)$$ as $a \to 0$ by Lemma \ref{lem:surfconv} and Theorem \ref{thm:elconv}. Since each $\Lambda_0$ is disconnected, we also have
$$
\el(\alpha + \beta , \Lambda_0) = \el(\alpha, \Lambda_0) + \el(\beta, \Lambda_0).
$$
by Lemma \ref{lem:extadditive}.

By the previous three lemmata we have 
\begin{align*}
\el(\alpha+\beta, X_0) &=\el(\alpha, X_0)+\el(\beta,X_0) \\
&\leq \frac{2}{e^2}+\el(\beta,X_0) \\
&< \frac{2}{3} + \el(\beta,Y_0) \\
&\leq \el(\alpha, Y_0)+\el(\beta,Y_0)\\
&=\el(\alpha+\beta, Y_0)
\end{align*}
and
\begin{align*}
\el(\alpha+\beta, Z_0) &=\el(\alpha, Z_0)+\el(\beta,Z_0) \\
&\geq \frac{2}{3}+\el(\beta,Z_0) \\
&> \frac{2}{e^2} + \el(\beta,W_0) \\
&\geq \el(\alpha, W_0)+\el(\beta,W_0)\\
&=\el(\alpha+\beta, W_0).
\end{align*}

The analogous inequalities must hold for small enough $a>0$ by convergence.

\end{proof}

By Lemma \ref{lem:criterion}, this implies the existence of non-convex balls in $\teich(S_{0,7})$, where $S_{\g,\p}$ is the closed surface of genus $\g$ with $\p$ points removed.

\subsection{Increasing the genus}

We modify the above construction to get a surface of genus $1$ with $4$ punctures. As before, we start with the polygon $P_a=P(1,1,1,1/a^3,1/a^2,1/a,a,1/a)$ for $0<a<1$ and take a copy $P_a^*$ of $P_a$ with reverse orientation. We think of $P_a$ as the front of the surface to be constructed and $P_a^*$ as the back. We glue each the side of $P_a$ to the corresponding side of $P_a^*$ except for the highest two horizontal sides. Call these sides $A$ and $B$ and let $A^*$ and $B^*$ be the corresponding sides of $P_a^*$. Then we glue $A$ to $B^*$ and $B$ to $A^*$ to obtain $\overline{\Phi}_a$. In other words, we glue the circle $A\cup A^*$ to $B\cup B^*$ in an orientation-reversing manner but with a half-twist. This creates a handle and a singularity of angle $4\pi$. Then we remove the $4$ singularities of angle $\pi$ from $\overline{\Phi}_a$ to obtain the half-translation surface $\Phi_a$. The curves $\alpha$ and $\beta$ are as before. We can also cut the top squares in $P_a$ and $P_a^*$ along their diagonal, rotate and glue $A\cup A^*$ to $B\cup B^*$ to obtain another useful representation of $\Phi_a$. See Figure \ref{fig:gluing}.

\begin{figure}[htbp] 
\centering
\includegraphics{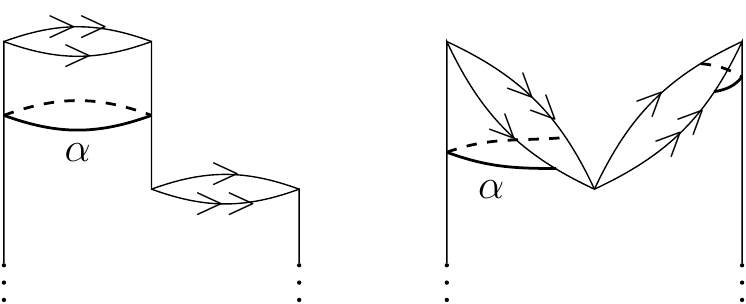}
\caption{Left: adding a handle to $\Phi_a$. Right: another representation of $\Phi_a$ obtained by cut-and-paste.}\label{fig:gluing}
\end{figure}

As in the previous subsection, we let $X_a = \geod_{-1} \Phi_a$, $Y_a = \Phi_a$, $Z_a = \geod_{\log(1/a)} \Phi_a$ and $W_a = \geod_{\log(1/a)+1} \Phi_a$. The claim is that all of these have conformal limits as $a \to 0$. 

Let $\Upsilon$ be two copies of the polygon $P(1,1,1,\infty)$ glued in the same pattern as described above, i.e.,  as in Figure \ref{fig:gluing}. Also let two copies of the polygon $P(1,0,1,\infty)$ with corresponding vertical sides glued together, the segment $[0,1]$ on the front glued to $[1,2]$ on the back, and vice versa. Denote the resulting surface $\Omega$. 

Let $Y_0 = \Upsilon \sqcup \Psi^\beta$, $X_0 = \geod_{-1} Y_0$, $Z_0= \Omega \sqcup \tau \Psi^\alpha $ and $W_0 = \geod_1 Z_0$, where $\Psi^\alpha$, $\Psi^\beta$ and $\tau$ are as in the previous subsection. 

\begin{lem}
For each $\Lambda\in \{X,Y,Z,W\}$, the surface $\Lambda_a$ converges conformally to $\Lambda_0$ as $a\to 0$.
\end{lem}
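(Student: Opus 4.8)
The plan is to follow the template of Lemma \ref{lem:surfconv} from the seven-punctured sphere case, treating the $\alpha$-piece and the $\beta$-piece (and their analogues at each of the four times) separately, since conformal convergence is checked component by component on the limiting surface. For the $\beta$-piece, nothing changes: the gluing modification at the top of $P_a$ is supported near the curve $\alpha$, so the part of $\Phi_a$ far out the right-hand horizontal strip is unaffected by the handle, and the argument via Rad\'o's theorem producing nearly conformal embeddings of large subsurfaces of $\Psi^\beta$ (at times $0$ and $-1$) and of $\tau\Psi^\alpha$ (for the $Z_0$, $W_0$ components, which only involve the right strip) carries over verbatim. Similarly the conformal invariance of $\Psi^\beta$ under Teichm\"uller flow, used in the previous subsection, still applies here.

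First I would handle $Y_a = \Phi_a$ converging to $Y_0 = \Upsilon \sqcup \Psi^\beta$. The $\Psi^\beta$ factor is as above. For the $\Upsilon$ factor, the key observation is that the gluing pattern defining $\Upsilon$ (two copies of $P(1,1,1,\infty)$ glued with the half-twist at the top) is built from exactly the same local pieces as the corresponding region of $\Phi_a$: for any height $L>0$, once $1/a^2 > L$ the top portion of $P_a$ of height $L$ together with its mirror $P_a^*$, glued by the prescribed identifications (ordinary identifications on the vertical and intermediate horizontal sides, and the twisted identification $A \leftrightarrow B^*$, $B \leftrightarrow A^*$ on the top), embeds isometrically into $\Phi_a$, and this embedding sends the corresponding truncation of $\Upsilon$ isometrically onto its image, mapping singularities to singularities and respecting the marked points. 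Isometric embeddings are in particular conformal with dilatation $1$, so letting $L \to \infty$ exhausts $\Upsilon$ and gives the required $K_n \to 1$ embeddings. The homotopy class of the embedding is the inclusion by construction. The cut-and-paste representation on the right of Figure \ref{fig:gluing} can be used interchangeably and makes the exhaustion transparent.

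Next, $X_a = \geod_{-1}\Phi_a$ converging to $X_0 = \geod_{-1} Y_0$: apply $\geod_{-1}$ to the embeddings just constructed; since $\geod_{-1}$ is a fixed linear (hence smooth, fixed-dilatation) map, composing a $K_n$-quasiconformal embedding with it and pre-composing on the source with $\geod_{-1}$ changes the dilatation by a bounded factor, but in fact $\geod_{-1}$ conjugates the isometric embeddings of $\Upsilon$-truncations into $\Phi_a$ to isometric embeddings of $\geod_{-1}\Upsilon$-truncations into $\geod_{-1}\Phi_a$, so $K_n \to 1$ still holds; and similarly for the $\Psi^\beta$ factor. For $Z_a = \geod_{\log(1/a)}\Phi_a$, apply the homothety of factor $a$ to $P_a$ as in the previous subsection to get $Q_a = P(a,a,a,1/a^2,1/a,1,a^2,1)$; the right half converges conformally to $P(\infty,1,0,1)$ exactly as before (this gives the $\tau\Psi^\alpha$ factor after noting the reflection), while the left (staircase) half, now rescaled, together with the twisted top gluing, manifestly exhausts $\Omega$ by the same isometric-truncation argument used for $\Upsilon$ — indeed $\Omega$ is defined precisely as the glued-up limit of that region. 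Then $W_a = \geod_1 Z_a \to \geod_1 Z_0 = W_0$ by composing with the fixed map $\geod_1$ as in the $X_a$ case.

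The only genuinely new point compared to Lemma \ref{lem:surfconv} is verifying that the twisted gluing at the top is compatible with truncation — i.e. that a bounded-height neighborhood of the handle in $\Upsilon$ (or $\Omega$) really does embed isometrically, respecting singularities and marked points, into $\Phi_a$ — and I expect this to be the main (though still routine) obstacle: it is essentially a bookkeeping check that the side identifications $A \leftrightarrow B^*$, $B \leftrightarrow A^*$ used to build $\overline\Phi_a$ are the same, locally near the top, as those used to build $\Upsilon$ and $\Omega$, which holds by construction. Everything else is either identical to the seven-punctured sphere argument or follows from it by applying a fixed element of $\gl^+(2,\RR)$.
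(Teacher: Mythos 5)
There is a genuine gap, and it sits exactly where the paper's proof has to do real work: the convergence of $Z_a$ (and $W_a$) to $\Omega$ (and $\geod_1\Omega$) on the handle side. You assert that, after rescaling by $a$, the staircase half of $Z_a$ together with the twisted top gluing ``manifestly exhausts $\Omega$ by the same isometric-truncation argument used for $\Upsilon$.'' That is false: no neighborhood of the handle curve in $\Omega$ embeds isometrically into the rescaled $Z_a$. In $\Omega$ the two sheets are glued along horizontal slits (the top square of $P(1,0,1,\infty)$ has collapsed to height $0$), whereas in the rescaled $Z_a$ the gluing seam runs along the tops of squares of height $a^2>0$, so the identified boundary is the graph-like curve of the doubled $M$--shape $(x,a^2|x|,\pm)$, not a straight segment; an isometric (indeed even conformal) embedding of a handle neighborhood of $\Omega$ would force $a=0$. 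This is precisely why the paper introduces the truncated tori $T_a^L$ and the piecewise-linear vertical shear $f_a(x,y,\eps)=(x,a^2|x|+y,\eps)$, whose dilatation tends to $1$, and composes it with the isometric inclusion of $T_a^L$ into the rescaled $Z_a$. Your proposal omits this construction and in fact claims the opposite of what it is needed for, so the key step of the lemma is missing. The isometric-truncation argument you give is correct only at times $0$ and $-1$ (for $\Upsilon$ and $\geod_{-1}\Upsilon$), where the top squares are untouched by the flow.

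A smaller point: your reduction of $X_a$ and $W_a$ to the time-$0$ and time-$\log(1/a)$ cases ``by composing with the fixed map $\geod_{\pm 1}$'' only works as stated for the half-translation isometric embeddings, which conjugate to isometric embeddings; for the Rad\'o-based embeddings on the $\beta$ side, conjugation by a fixed affine map inflates the dilatation by a fixed factor (up to $e^4$), which does not give $K_n\to 1$. The correct route, as in the paper, is to rerun the Rad\'o argument with the stretched polygons (limit $P(\infty,e,0,e)$, etc.); since your first paragraph already invokes ``carries over verbatim'' at all four times, this is easily repaired, but the phrase ``changes the dilatation by a bounded factor'' should not be leaned on.
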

\begin{proof}
For each $\Lambda$, the convergence from the point of view of the bottom right subsurface containing $\beta$ holds for the same reasons as before.

From the point of view of $\alpha$, it is clear that $Y_a$ converges conformally to $\Upsilon$ as $a \to 0$ since any compact subset of $\Upsilon$ eventually embeds isometrically into $Y_a$. Similarly, the top left portion of $X_a$ converges conformally to $ \geod_{-1} \Upsilon$ as $a \to 0$.

The only part left to prove is that $Z_a$ and $W_a$ converge to $\Omega$ from the point of view of $\alpha$. We prove this for $Z_a$, the other case being similar. 

For $a\geq 0$ and $L>0$, let
$$
T_a^L=\{\, (x,y, \eps) \in \RR^2 \times \{+,-\}  : |x|\leq 1, -L < y\leq a^2|x| \,\} / \sim
$$

where $(x,a^2|x|,+)\sim(-x,a^2|x|,-)$ for every $x \in [-1,1]$. This is a torus with one hole obtained by gluing two $M$--shapes together. Note that $T_0^\infty = \Omega$. 

If we rescale $Z_a$ by a factor $a$ so that its left vertical chimney has circumference $4$, we see that $T_a^L$ embeds conformally into $Z_a$ provided that $1/a \geq L$. This uses the alternative gluing pattern for $\Phi_a$ with diagonal lines.

Consider the piecewise linear homeomorphism $f_a : T_0^\infty \to T_a^\infty$ defined by $$f_a(x,y,\eps) = (x,a^2|x|+y,\eps).$$ On each piece of $T_0^\infty$ where $x$ and $\eps$ have constant sign, the map $f_a$ is a vertical shear. Its dilatation tends to $1$ as $a \to 0$. 

Let $L\in(0,\infty)$. If $1/a \geq L$, then the restriction of $f_a$ to $T_0^L$ followed by the conformal embedding of $T_a^L$ into $Z_a$ provides a quasiconformal embedding with dilatation arbitrarily close to $1$. Since the subsurfaces $T_0^L$ exhaust $\Omega$, we are done.  

\end{proof}

We leave it to the reader to check that the extremal length estimates of Lemma \ref{lem:bound1}, Lemma \ref{lem:bound2} and Lemma \ref{lem:bound3} hold for this example as well. In the same way as before, we deduce that 
$$
\el(\alpha+\beta, X_a)<\el(\alpha+\beta, Y_a) \quad \mbox{and} \quad \el(\alpha+\beta, Z_a)>\el(\alpha+\beta, W_a)
$$
provided that $a$ is small enough. Hence there exist non-convex balls in $\teich(S_{1,4})$.

In the same fashion, we can further replace the $3$ punctures on the bottom right of $\Phi_a$ by a handle,  which shows that $\teich(S_{2,1})$ contains non-convex balls.

We can also produce examples in any higher topological complexity as follows. Suppose that $3\g-3+\p > 4$ and let $\mathbf{h}=\min(2,\g)$ and $\mathbf{q} = 7 - 3\mathbf{h}$. Let $\Phi_a$ be the half-translation surface constructed above of genus $\mathbf{h}$ with $\mathbf{q}$ punctures. In the bottom left corner of $\Phi_a$, we may remove $\p - \mathbf{q}$ points, cut $\g-\mathbf{h}$ horizontal slits, and glue each one back to itself in an $ABA^{-1}B^{-1}$ pattern to form a handle. The resulting half-translation surface $\wtilde\Phi_a$ has genus $\g$ and $\p$ punctures. Moreover, the conformal limits of $\geod_{-1} \wtilde\Phi_a$, $\wtilde\Phi_a$, $\geod_{\log(1/a)} \wtilde\Phi_a$ and $\geod_{\log(1/a)+1} \wtilde\Phi_a$ for the top left and bottom right subsurfaces are all unchanged. Indeed, the images of the nearly conformal embeddings used to prove conformal convergence were all disjoint from the bottom left corner. The same proof carries over and we obtain:

\begin{thm}
There exist non-convex balls in $\teich(S_{\g,\p})$ whenever $3\g - 3 + \p \geq  4$.
\end{thm}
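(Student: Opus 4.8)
The plan is to leverage the two base cases already established—namely that $\teich(S_{0,7})$, $\teich(S_{1,4})$, and $\teich(S_{2,1})$ contain non-convex balls—and to propagate them to all $S_{\g,\p}$ with $3\g-3+\p\geq 4$ by attaching extra topology in a region of the surface that is irrelevant to the multicurve $\alpha+\beta$. First I would fix the integers: set $\mathbf{h}=\min(2,\g)$ and $\mathbf{q}=7-3\mathbf{h}$, so that $(\mathbf{h},\mathbf{q})\in\{(0,7),(1,4),(2,1)\}$ according to whether $\g=0$, $\g=1$, or $\g\geq 2$, and in each case $3\mathbf{h}-3+\mathbf{q}=4$. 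The point is that $\p-\mathbf{q}\geq 0$ and $\g-\mathbf{h}\geq 0$ under the hypothesis, and moreover $(\p-\mathbf{q})+(\g-\mathbf{h})=(3\g-3+\p)-4\geq 0$ counts exactly the amount of extra complexity we must insert.

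Next I would carry out the surgery. Starting from the half-translation surface $\Phi_a$ of genus $\mathbf{h}$ with $\mathbf{q}$ punctures built in the previous subsections, work entirely in the bottom-left corner of the staircase $P_a$ (the long step of length $1/a^3$ near height $0$, doubled), which is a flat region that stays large as $a\to 0$. There I would (i) remove $\p-\mathbf{q}$ additional marked points, and (ii) cut $\g-\mathbf{h}$ short horizontal slits and glue each slit to itself in the standard $ABA^{-1}B^{-1}$ handle pattern. Call the result $\wtilde\Phi_a$; by construction it has genus $\g$ and $\p$ punctures, and the curves $\alpha$ and $\beta$ (living in the top step and the rightmost step respectively) are untouched. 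I would then define $X_a=\geod_{-1}\wtilde\Phi_a$, $Y_a=\wtilde\Phi_a$, $Z_a=\geod_{\log(1/a)}\wtilde\Phi_a$, $W_a=\geod_{\log(1/a)+1}\wtilde\Phi_a$, exactly paralleling the earlier setup.

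The key step is to observe that conformal convergence, as used before, is entirely local: the nearly conformal embeddings $f_n$ exhibiting that $\geod_{-1}\Phi_a$, $\Phi_a$, $\geod_{\log(1/a)}\Phi_a$, $\geod_{\log(1/a)+1}\Phi_a$ converge conformally to $X_0,Y_0,Z_0,W_0$ (from the point of view of the subsurfaces containing $\alpha$ and containing $\beta$) all had images disjoint from the bottom-left corner. Hence the very same embeddings, regarded now as maps into $\wtilde\Phi_a$, still witness conformal convergence of $\geod_t\wtilde\Phi_a$ to the same limits $X_0,Y_0,Z_0,W_0$ — the surgery happens outside their support and changes nothing about the relevant hyperbolic/conformal data near $\alpha$ and $\beta$. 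Consequently Theorem~\ref{thm:elconv} gives $\el(\alpha+\beta,\Lambda_a)\to\el(\alpha+\beta,\Lambda_0)$ for $\Lambda\in\{X,Y,Z,W\}$, with exactly the same limits as in the $S_{0,7}$ case, and the chain of inequalities from Lemma~\ref{lem:bound1}, Lemma~\ref{lem:bound2}, and Lemma~\ref{lem:bound3} (which only concern $X_0,Y_0,Z_0,W_0$) yields $\el(\alpha+\beta,X_0)<\el(\alpha+\beta,Y_0)$ and $\el(\alpha+\beta,Z_0)>\el(\alpha+\beta,W_0)$. Passing to small $a$ and invoking Lemma~\ref{lem:criterion} produces a non-convex ball in $\teich(S_{\g,\p})$, since $X_a,Y_a,Z_a,W_a$ lie in that order on the Teichm\"uller line $t\mapsto\geod_t\wtilde\Phi_a$.

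The main obstacle I anticipate is purely bookkeeping rather than analytic: one must verify carefully that the horizontal slits and removed points can be accommodated inside the bottom-left step without colliding with each other or with the seam structure, and that after the $ABA^{-1}B^{-1}$ gluings no spurious $\pi$-angle cone point is created (or, if one is, that it is removed so $\wtilde\Phi_a$ remains a genuine half-translation surface with the correct genus and puncture count). One also needs to confirm the arithmetic that $\g-\mathbf{h}$ handles plus $\mathbf{h}$ equals $\g$ and $(\p-\mathbf{q})$ extra punctures plus $\mathbf{q}$ equals $\p$, and that the flat region of width comparable to $1/a^3$ indeed contains room for a fixed finite amount of surgery uniformly as $a\to 0$. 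None of this is deep, but it is where the proof could go wrong if stated sloppily; everything else is a verbatim repetition of the arguments already given for $S_{0,7}$, $S_{1,4}$, and $S_{2,1}$.
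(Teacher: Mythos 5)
Your proposal is correct and follows essentially the same route as the paper: the same choice of $\mathbf{h}=\min(2,\g)$ and $\mathbf{q}=7-3\mathbf{h}$, the same surgery (removing $\p-\mathbf{q}$ points and gluing $\g-\mathbf{h}$ slits in an $ABA^{-1}B^{-1}$ pattern) in the bottom-left corner, and the same observation that the nearly conformal embeddings witnessing convergence have images disjoint from that corner, so the limits and the extremal length inequalities are unchanged. The bookkeeping concerns you raise are exactly the ones the paper implicitly relies on, and they do check out.
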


This leaves out $5$ cases with $\dim_\CC \teich(S_{\g,\p}) = 3 \g - 3 + \p > 1$: 
$S_{0,5}$, $S_{0,6}$, $S_{1,2}$, $S_{1,3}$ and $S_{2,0}$. Note that the above strategy of proof cannot be applied to $S_{0,5}$. Indeed, we would need to split $S_{0,5}$ into two subsurfaces each containing an essential simple closed curve. But no matter how we cut $S_{0,5}$, one component is a sphere with at most $3$ holes, hence has no essential simple closed curve. Although the limiting argument does not carry over, the idea of playing a horizontal curve against a vertical curve is still fruitful.

\section{Lower complexity cases} \label{sec:lshapes}

\subsection{The Schwarz-Christoffel formula}

Consider the polygon $L_a=P(1,a,a,1)$ where $a>0$ and $P$ is the staircase-shaped polygon from section \ref{sec:staircase}. This is an $L$-shape obtained by removing the top right $a$ by $a$ square from a $(1+a)$ by $(1+a)$ square. We mark each of the $5$ internal right angles in $L_a$. Let $\alpha$ be the arc crossing the vertical leg in $L_a$, let $\beta$ be the arc crossing the horizontal leg, and let $\gamma=\alpha+\beta$. 

\begin{figure}[htbp] 
\centering
\includegraphics{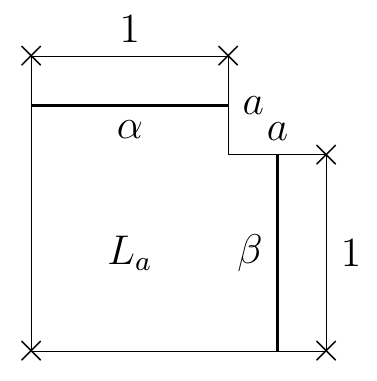}
\caption{The polygon $L_a$.}\label{fig:lshape}
\end{figure}

Since all the sides in $L_a$ are either horizontal or vertical, its double $\Phi_a$---topo\-lo\-gi\-cally a sphere with $5$ marked points---is a half-translation surface. We want to study the behavior of the extremal length of the double $\what \gamma$ of $\gamma$ in $\Phi_a$ under the Teichm\"uller flow.

\begin{lem} \label{lem:confmap}
There exists a conformal homeomorphism $h: \geod_t L_a \to M$ where $M=R_\alpha \cup R_\beta$ is a stack of two rectangles of height $1$ that line up on their right side such that 
\begin{enumerate}
\item the inverse images of corners of $M$ with interior angle $\pi/2$ are marked points in $\geod_t L_a$; 
\item $h(\alpha)$ joins the left side of $R_\alpha$ to the right side of $M$;
\item $h(\beta)$ joins the left side of $R_\beta$ to the right side of $M$.
\end{enumerate}
Moreover, we have $$\el(\what \gamma, \geod_t \Phi_a) = 2 \el(\gamma, \geod_t L_a) = 2 \area(M).$$
\end{lem}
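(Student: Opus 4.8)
The strategy is to produce the conformal map $h$ by composing two natural maps: first the Teichm\"uller flow map $\geod_t$ applied to the $L$-shape, then a Schwarz-Christoffel-type uniformization of the image $L$-shape onto the target $M$. Since $L_a = P(1,a,a,1)$ is a rectilinear polygon, the image $\geod_t L_a$ is again a rectilinear polygon (the flow only scales the horizontal and vertical directions, so horizontal sides stay horizontal and vertical sides stay vertical). It is thus again an $L$-shape, say with legs of some new width and the outer square of some new size. The key observation is that an $L$-shaped hexagon has exactly six vertices, five of which are convex (interior angle $\pi/2$) and one of which is reflex (interior angle $3\pi/2$), and these are precisely the five marked points of $L_a$ together with the reentrant corner.

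First I would set up the target region $M$ carefully: $M$ is the union of two axis-parallel rectangles $R_\alpha$ (on top) and $R_\beta$ (on bottom), each of height $1$, stacked so their right edges coincide and their left edges may differ; thus $M$ is also an $L$-shaped hexagon (or a genuine rectangle in the degenerate case where the two widths agree). The widths of $R_\alpha$ and $R_\beta$ are the free parameters, and they are exactly the extremal lengths (moduli) we want. I would then invoke the Riemann mapping theorem / Schwarz-Christoffel theory: there is a conformal map from the $L$-shape $\geod_t L_a$ to $M$ sending the six vertices to the six vertices in the cyclic order matching angles, i.e., the reflex vertex goes to the reflex vertex and convex vertices to convex vertices. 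One has three real degrees of freedom in the Riemann map (fixing three boundary points) but the target $M$ has two free shape parameters (the two rectangle widths) plus a conformal normalization, so a counting/continuity argument — or simply the uniqueness of the rectangle/generalized-quadrilateral uniformization — pins down $M$ uniquely and produces $h$. Conditions (1)–(3) then follow by tracking where $\alpha$ and $\beta$ go: $\alpha$ crosses the vertical leg of the $L$-shape, which is the piece corresponding to $R_\alpha$, so $h(\alpha)$ is an arc of $R_\alpha$ joining its left side to the shared right side; similarly for $\beta$.

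For the final displayed equalities I would argue as follows. Since doubling across the boundary multiplies extremal length of a doubled curve by $2$ for curves that meet the boundary in the ``right'' way (more precisely, $\what\gamma$ decomposes into two copies of the curve system $\gamma$ in the front and back, disjoint except along the boundary, and by a symmetry/reflection argument the extremal metric on $\Phi_a$ is the symmetric double of the extremal metric on $L_a$), we get $\el(\what\gamma, \geod_t\Phi_a) = 2\,\el(\gamma, \geod_t L_a)$; this is the standard ``doubling halves the modulus / doubles extremal length'' principle and I would cite the behavior of extremal length under the conformal involution. Finally, $\el(\gamma, \geod_t L_a) = \el(\gamma, M)$ by conformal invariance (via $h$), and on $M$ the computation is explicit: $\gamma = \alpha + \beta$, where in $M$ the curve $\alpha$ sweeps across $R_\alpha$ and $\beta$ across $R_\beta$, each rectangle has height $1$, so writing $c_\alpha, c_\beta$ for the two widths, the optimal (Euclidean) metric gives $\el(\alpha,M) + \el(\beta,M)$... but actually one must be slightly careful: $\alpha+\beta$ is a multicurve, and by equation \eqref{eq:ELisinf} its extremal length on $M$ equals $c_\alpha + c_\beta$ (realized by the two rectangles themselves with height $1$), while $\area(M) = c_\alpha + c_\beta$ as well since each rectangle has height $1$. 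Hence $\el(\gamma, M) = \area(M)$, giving the claimed chain of equalities.

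\textbf{Main obstacle.} The delicate point is the existence and uniqueness of the map $h$ onto a region of the prescribed shape $M$ (two stacked height-$1$ rectangles aligned on the right), i.e., showing that the two shape parameters of $M$ can be matched to the conformal type of $\geod_t L_a$ while respecting the marked points and the incidence conditions (1)–(3). This is essentially a statement about uniformizing a ``pentagon'' or generalized quadrilateral and is where a careful continuity/degree argument (or an appeal to the classical theory of conformal maps of rectilinear polygons, perhaps via the reduced Schwarz-Christoffel parameter problem) is needed; everything else — that the flow preserves rectilinearity, that doubling doubles extremal length, and the explicit computation on $M$ — is routine once $h$ is in hand.
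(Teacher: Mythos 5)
Your overall route---uniformize $\geod_t L_a$ directly onto a target of the prescribed shape and then compute---is legitimate, and in fact the paper sketches exactly this alternative in the remark following the lemma; the paper's own proof is different (it takes the Jenkins--Strebel differential $\Psi$ realizing $\el(\what\gamma,\geod_t\Phi_a)$ on the doubled surface, uses uniqueness of the extremal cylinders to get invariance under the anti-conformal involution, and then Gauss--Bonnet plus the count of marked points to conclude that the quotient is a stack of two height-$1$ rectangles lining up on one side). However, your proposal has a genuine gap precisely at what you yourself call the main obstacle: the existence of $h$. Worse, the counting you offer for it is incorrect. You ask for a conformal map sending the six vertices of the $L$-shape to the six vertices of $M$ ``matching angles, i.e., the reflex vertex goes to the reflex vertex.'' That is six boundary-point conditions against only $3+2=5$ real parameters (the M\"obius freedom of the Riemann map plus the two widths of $M$), so it is over-determined, and it is false in general: the reflex corner of $\geod_t L_a$ does \emph{not} map to the reflex corner of $M$. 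This is visible in the paper's Schwarz--Christoffel discussion, where the zero $b$ of the differential uniformizing $\geod_t L_a$ and the zero $p$ for the map onto $M$ are distinct prevertices (and in the numerical section the reflex corner of $Y$ sits at $\phi(\infty)$ while that of $Y'$ sits at $\psi(p)$ with $p$ finite). The lemma only requires the preimages of the $\pi/2$ corners of $M$ to be marked points; the reflex corner of $M$ is free, and its preimage is an unmarked boundary point.

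The correct version of your continuity argument is the one in the paper's remark: the five prevertices $z_0,\dots,z_4$ of the marked points are fixed by the conformal structure of the marked $L$-shape, and only the reflex prevertex $p$ of the target varies; one real parameter against the one equation that the two rectangles have equal height, solved by the intermediate value theorem, after which rescaling gives $M$ and $h$, with condition (1) automatic from the Schwarz--Christoffel form and (2)--(3) from the cyclic order of prevertices. Without this step (or a substitute such as the paper's Jenkins--Strebel/symmetry argument) nothing else in your sketch gets off the ground. Your treatment of the displayed equalities is essentially sound and close to the paper's: the Euclidean metric on $M$ gives $\el(\gamma,\geod_t L_a)=\el(h(\gamma),M)\ge\area(M)$, the doubling comparison gives $\el(\what\gamma,\geod_t\Phi_a)\ge 2\,\el(\gamma,\geod_t L_a)$, and the reverse bound $\el(\what\gamma,\geod_t\Phi_a)\le 2\area(M)$ can be obtained either from $\area(\Psi)$ as in the paper or, staying within your framework, by applying equation \ref{eq:ELisinf} to the doubled rectangles, which are honest cylinders in $\geod_t\Phi_a$. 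Just note that equation \ref{eq:ELisinf} is stated for multicurves on punctured surfaces, so for the arc system on the bordered region $M$ you should argue via the double or run the length--area argument directly, as the paper indicates.
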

\begin{proof}
As mentioned in Section \ref{sec:prelim}, the extremal length $\el(\what\gamma,\geod_t \Phi_a)$ is realized by a unique Jenkins-Strebel half-translation structure $\geod_t \Phi_a \to \Psi$ partitioned into two horizontal cylinders $C_\alpha$ and $C_\beta$ of height $1$ each, homotopic to the doubles $\what \alpha$ and $\what \beta$ of the arcs $\alpha$ and $\beta$. Then
$$\el(\what \gamma, \geod_t \Phi_a) = \el(C_\alpha)+\el(C_\beta) = \area(C_\alpha)+\area(C_\beta) = \area(\Psi).$$

Let $J : \geod_t \Phi_a \to \geod_t \Phi_a$ be the anti-conformal involution exchanging $\geod_t L_a$ with its mirror image. Then $J(C_\alpha)$ and $J(C_\beta)$ are disjoint cylinders homotopic to $\what \alpha$ and $\what \beta$ respectively having the same extremal length as $C_\alpha$ and $C_\beta$. By uniqueness of the extremal cylinders, the latter are invariant under $J$. It follows that $\Psi$ is also symmetric with respect to $J$. Indeed, $\overline{J^* \Psi}$ is a half-translation structure on $\geod_t \Phi_a$ partitioned into two horizontal cylinders of height $1$ homotopic to $\what \alpha$ and $\what \beta$, and is thus equal to $\Psi$ by uniqueness.

Any anti-conformal involution of a Euclidean cylinder $S^1 \times I$ which reverses the orientation of its core curve comes from a reflection of $S^1$ about a diameter. Thus $R_\alpha=C_\alpha \cap \geod_t L_a$ and $R_\beta=C_\beta \cap \geod_t L_a$ are Euclidean rectangles of height $1$ in the half-translation structure $\Psi$. Let $h: \geod_t L_a \to M$ be the restriction of the conformal isomorphism $\geod_t \Phi_a  \to \Psi$. Then $M=R_\alpha \cup R_\beta$ with $R_\alpha$ and $R_\beta$ glued isometrically along some part of their horizontal boundary.

The Gauss-Bonnet theorem tells us that $\Psi$ an angle defect of $4\pi$. Since $\Psi$ has at most $5$ cone points of angle $\pi$, it has at most one cone point of angle $3\pi$. Such a cone point has to lie on the circle of symmetry of $\Psi$, otherwise there would be two. Thus $M$ has no singularities in its interior, which means that it is really a polygon. The preimages of the right angles in $M$ by $h$ have to be marked points in $\geod_t L_a$, for after doubling $M$ the right angles give rise to $\pi$-angle singularities of $\Psi$. Since there are only $5$ marked points in $\geod_t L_a$, the rectangles have to line up on one side. If we rotate $M$ so that $R_\alpha$ is on top, then they line up on the right side, where there is no marked point separating $\alpha$ from $\beta$.

Let $\rho$ be the Euclidean metric on $M$. Then
\begin{align*}
\el(\gamma, \geod_t L_a) &= \el(h(\gamma), M) \\ & \geq \frac{\ell(h(\gamma),\rho)^2}{\area(\rho)} = \frac{(\ell(R_\alpha)+\ell(R_\beta))^2}{\area(M)} = \area(M).
\end{align*}
On the other hand, if the ratio $\ell(\gamma, \sigma)^2 / \area(\sigma)$ was strictly bigger than $\area(M)$ for some conformal metric $\sigma$ on $\geod_t L_a$, then by doubling we would get
$$
\el(\what \gamma, \geod_t \Phi_a) \geq \frac{\ell(\what \gamma, \what \sigma)^2}{\area(\what \sigma)} =  \frac{(2\ell(\gamma, \sigma))^2}{2\area(\sigma)}> 2 \area(M) = \area(\Psi),
$$
a contradiction. Alternatively, one can prove that $\rho$ is extremal using the standard length-area argument \cite{Dylan}.
\end{proof}

\begin{figure}[htbp] 
\centering
\includegraphics{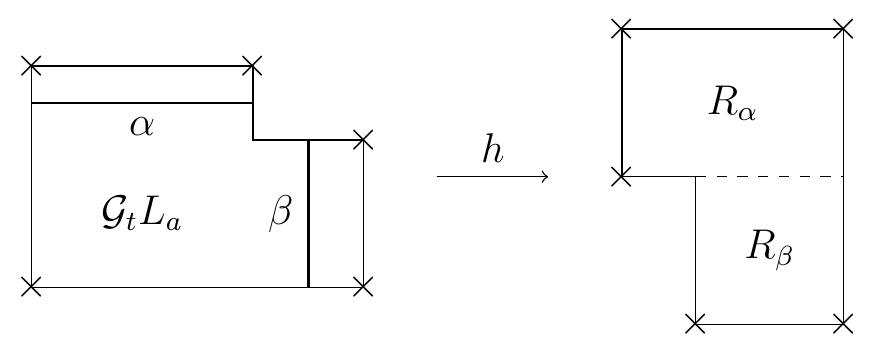}
\caption{The conformal homeomorphism in Lemma \ref{lem:confmap}}\label{fig:confmap}
\end{figure}

If we show that for some $a>0$ the function $t \mapsto \el(\gamma,\geod_t L_a)$ increases and later decreases, then the same holds for the function $t \mapsto \el(\what\gamma,\geod_t \Phi_a)$ and this implies the existence of non-convex balls in $\teich(S_{0,5})$.

Another relevant observation is that the reflection $\tau$ in the diagonal line $y=x$ maps $\geod_t L_a$ to $\geod_{-t} L_a$ anti-conformally and sends the homotopy class of $\gamma$ to itself so that the function $t \mapsto \el(\gamma,\geod_t L_a)$ is even. Therefore, all we have to show is that there exists positive $a$ and $t$ such that $\el(\gamma,\geod_0 L_a)>\el(\gamma,\geod_t L_a)$.

Let $f: \overline{\HH^2} \to \geod_t L_a$ be a conformal homeomorphism. Then $f$ extends by Schwarz reflection to a conformal homeomorphism $ f : \CHAT \to \geod_t \overline{\Phi_a}$. The pull-back $q={f}^* dz^2$ is a meromorhic quadratic differential on $\CHAT$ with a simple pole at the preimage of each marked point and a simple zero at the preimage of the inward corner in $\geod_t L_a$. Moreover, $q$ is symmetric with respect to complex conjugation. We thus have
$$
q= \frac{A(z-b)}{\Pi_{j=0}^4(z-z_j)} dz^2 = (f'(z))^2 dz^2
$$
for some $A$, $b$ and $z_j$ in $\RR$. It follows that
$$
f(z) = \sqrt{A} \int_0^z \sqrt{\frac{(\zeta-b)}{\Pi_{j=0}^4(\zeta-z_j)}} d\zeta + f(0).
$$
This is a special case of the Schwarz-Christoffel formula for conformal maps onto polygons \cite[p.10]{SCMapping}. For the formula to make sense, one has to pick a consistent choice of square root, which we can do on $\overline{\HH^2}$. 

Let $g=h \circ f: \overline{\HH^2} \to M$ where $h: \geod_t L_a \to M$ is as in Lemma \ref{lem:confmap}. By the same reasoning as above, $g$ has the form
$$
g(z) = C \int_0^z \sqrt{\frac{(\zeta-p)}{\Pi_{j=0}^4(\zeta-z_j)}} d\zeta + D
$$
for some constants $p$, $C$ and $D$. The area of $M$ can then be recovered from its side lengths, which are integrals of the above form.

\begin{remark}
One can use the Schwarz-Christoffel formula to prove the first part of Lemma \ref{lem:confmap}. Indeed, for any choice of $p \in \RR $, the map
$$
G_p(z) = \int_0^z \sqrt{\frac{(\zeta-p)}{\Pi_{j=0}^4(\zeta-z_j)}} d\zeta
$$
is a conformal homeomorphism from $\overline{\HH^2}$ to a polygon with angle $\pi/2$ at each vertex $G_p(z_j)$ and angle $3\pi/2$ at $G_p(p)$. Suppose that 
\begin{equation} \label{eq:ineq}
|G_{z_1}(z_0)-G_{z_1}(z_1)|\leq |G_{z_1}(z_1)-G_{z_1}(z_2)|.
\end{equation}
Then by the intermediate value theorem, there exists a point $p$ between $z_1$ and $z_2$ such that
$$
|G_{p}(z_0)-G_{p}(z_1)|=|G_{p}(p)-G_{p}(z_2)|.
$$ 
Indeed, $|G_{p}(z_0)-G_{p}(z_1)|$ is bounded away from zero for $p \in [z_1,z_2]$ whereas $|G_{p}(p)-G_{p}(z_2)|$ tends to zero as $p \to z_2$. If the reverse of inequality (\ref{eq:ineq}) holds, then there is a $p$ between $z_0$ and $z_1$ such that
$$
|G_{p}(z_0)-G_{p}(p)|=|G_{p}(z_1)-G_{p}(z_2)|.
$$
In either case, after rescaling we get that $G_p(\overline{\HH^2})$ is a stack of two rectangles of height $1$.
\end{remark} 

The problem of calculating $\el(\gamma, \geod_t L_a)$ has now been reduced to finding the correct parameters $z_0,...,z_4$, $b$ and $p$ (all of which depend on $a$ and $t$). The Schwarz-Christoffel Toolbox \cite{Driscoll} for MATLAB is designed to solve this parameter problem numerically. We used this to compute $\el(\gamma, \geod_t L_a)$ for $a=1/4$ at $10^3+1$ equally spaced values of $t$ in the interval $[-0.275,0.275]$ and obtained Figure \ref{fig:plot}.

\begin{figure}[htp]
\includegraphics[scale=.7]{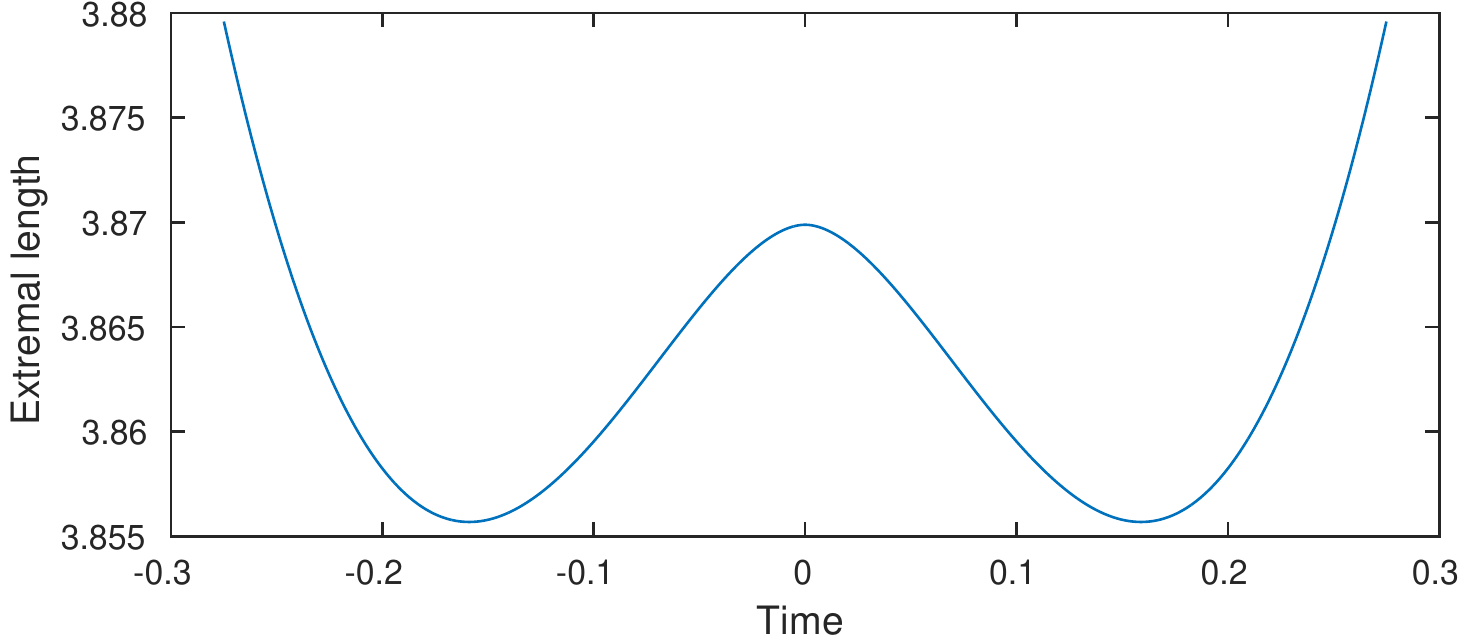}
\caption{Graph of $t \mapsto \el(\gamma, \geod_t L_a)$ for $a=1/4$.}
\label{fig:plot}
\end{figure}

The figure clearly shows a decrease from time $t=0$ to $t\approx 0.159$. However, the Schwarz-Christoffel Toolbox does not come with any certified error estimates. Moreover, the apparent decrease of extremal length is rather small: it drops from about $3.87$ to $3.856$, which represents less than $1\%$ decrease. 

In order to turn this into a rigorous proof, we do the following. We take the approximate parameters provided by the SC Toolbox, then compute the corresponding Schwarz-Christoffel integrals numerically but with certified precision. Since the initial parameters are inexact, the images of the Schwarz-Christoffel maps are not the polygons we expect, but we can estimate how far away they are from the correct polygons and deduce bounds for extremal length.

One way to get rigorous bounds on a numerical result is to use interval arithmetic. Roughly speaking, interval arithmetic means that instead of rounding to the nearest representable number, the computer keeps track of correct lower and upper bounds for every operation, yielding a true interval in which the result of a calculation lies.

There exist packages that do numerical integration using interval arithmetic. However, we did not find any that can handle improper integrals. We thus wrote a program in Sage \cite{sage} to compute lower and upper bounds on the integrals needed using interval arithmetic. The Sage worksheet and its output are available at \url{http://github.com/maxforbou/non-convex-balls}.

\subsection{Rigorous bounds}
  
Let $k=5.27110734472$, let 
$$
f(z)=\int_0^z \frac{d\zeta}{\sqrt{\zeta(\zeta^2-1)(\zeta^2-k^2)}}
$$
and let $X=f(\overline{\HH^2})$ with marked points at $0=f(0)$, $f(\pm 1)$ and $f(\pm k)$. The polygon $X$ is an $L$-shape with angle $\pi/2$ at the marked points and angle $3\pi/2$ at $f(\infty)$. Furthermore, $X$ is symmetric about the diagonal line $y=x$ since the function under the square root is odd. Thus, $X$ is a rescaled copy of $L_a$, where
$$
a=\frac{|f(0)-f(1)|}{|f(1)-f(k)|}-1.
$$ 

We want to get rigorous bounds on both the shape of $X$ and the extremal length of $\gamma$ in $X$. The first thing we need to compute is the integral
$$
|f(0)-f(1)|=\left|\int_0^1 \frac{dx}{\sqrt{x(x^2-1)(x^2-k^2)}}\right|=\int_0^1 \frac{dx}{\sqrt{|x(x^2-1)(x^2-k^2)|}}.
$$
The main observation is that the integrand $$F(x)=\frac{1}{\sqrt{|x(x^2-1)(x^2-k^2)|}}$$ is logarithmically convex (hence convex) on $(0,1)$.

\begin{lem}
Suppose that $z_0 < z_1 < z_2 < z_3 < z_4$. Then the function
$$
F(x)= \prod_{j=0}^4|x-z_j|^{-1/2}
$$
is log-convex between any two consecutive $z_j$'s.
\end{lem}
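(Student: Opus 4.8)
The plan is to prove log-convexity directly by computing the second derivative of $\log F$ and showing it is positive on each open interval between consecutive $z_j$'s. Write $\phi(x)=\log F(x) = -\tfrac12 \sum_{j=0}^4 \log|x-z_j|$. On any interval $(z_j, z_{j+1})$ (or the two unbounded pieces, which are not relevant here but the argument is identical), each term $\log|x-z_j|$ is smooth, so we may differentiate term by term. We get $\phi'(x) = -\tfrac12 \sum_{j=0}^4 \frac{1}{x-z_j}$ and hence $\phi''(x) = \tfrac12 \sum_{j=0}^4 \frac{1}{(x-z_j)^2}$.

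The key observation is that this last expression is a sum of squares of real numbers, hence strictly positive (the $z_j$ are distinct, so no term blows up or vanishes on the open interval, and at least one term — in fact all five — is strictly positive). Therefore $\phi'' > 0$ on each interval between consecutive $z_j$'s, which is exactly the statement that $\log F$, and hence $F$ itself (since $\exp$ is increasing and convex), is convex there. I would phrase the conclusion as: $\phi$ is strictly convex on each such interval, so $F = e^\phi$ is strictly convex as a composition of the convex increasing function $\exp$ with the strictly convex function $\phi$.

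The only point requiring a word of care is that the differentiation is valid: $F$ is positive and real-analytic on each open interval $(z_j, z_{j+1})$ because the polynomial $\prod_j (x - z_j)$ has constant sign there and is bounded away from $0$ on compact subintervals, so $|x - z_j|^{-1/2}$ is smooth and the logarithm is well-defined and smooth. Once that is noted, the computation of $\phi''$ is a one-line exercise and the positivity is immediate. I do not anticipate a genuine obstacle here — the statement is essentially the elementary fact that $x \mapsto -\log|x - c|$ is convex away from $c$, summed over the five poles/zeros, combined with convexity being preserved under finite sums and under post-composition with $\exp$.

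One alternative framing, which I would mention only if it shortens the exposition: each factor $|x - z_j|^{-1/2}$ is individually log-convex on any interval not containing $z_j$ (its log has second derivative $\tfrac12 (x-z_j)^{-2} > 0$), and a product of log-convex functions is log-convex; applying this on an interval $(z_j, z_{j+1})$ that avoids every $z_k$ gives the claim. This is the same computation repackaged, so I would likely just present the direct second-derivative argument as above.
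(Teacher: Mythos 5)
Your proposal is correct and uses essentially the same argument as the paper: compute $(\log F)''(x) = \tfrac12 \sum_{j=0}^4 (x-z_j)^{-2} > 0$ on each interval between consecutive $z_j$'s. The extra remarks about smoothness and about $F=e^{\log F}$ also being convex are fine but not needed beyond this one-line computation.
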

\begin{proof}
We compute
$$
(\log F)'(x) = -\frac12 \left(\sum_{j=0}^4 \frac{1}{x-z_j} \right)
$$
and 
$$
(\log F)''(x) = \frac12 \left(\sum_{j=0}^4 \frac{1}{(x-z_j)^2} \right)>0.
$$
\end{proof}

Therefore, for any compact subtinterval $I \subset (0,1)$ and any partition $\{x_{-n},\ldots,x_n\}$ of $I$ we have
\begin{align*}
\sum_{j=-n}^{n-1} (x_{j+1} - x_j)F\left(\frac{x_j+x_{j+1}}{2}\right) &\leq \int_I F(x)\,dx \\
 \leq &\sum_{j=-n}^{n-1} (x_{j+1} - x_j)\left(\frac{F(x_j)+F(x_{j+1})}{2}\right)
\end{align*}
by the trapezoid rule. We choose the partition $\{x_{-n},\ldots,x_n\}$ using the tanh-sinh quadrature \cite{tanhsinh} which is well-adapted for this type of singular integral. On a bounded interval $(a,b)$ the quadrature points are defined as
$$
x_j = \frac{(a+b)}{2} + \frac{(b-a)}{2}\tanh\left(\frac{\pi}{2}\sinh(j\Delta)\right)
$$
where $\Delta>0$ is a step size to be determined together with $n$. In this case we took $\Delta=2^{-13}$ and $n=2^{15}$. 

Let $\delta=x_{-n}=1-x_n$ where the $x_j$'s are sample points for the interval $(0,1)$. An elementary calculation shows that
$$
0 \leq \int_0^\delta \frac{dx}{\sqrt{x(1-x^2)(k^2-x^2)}} \leq \frac{2 \sqrt{\delta}}{\sqrt{(1-\delta^2)(k^2-\delta^2)}}.
$$
and 
$$
0 \leq \int_{1-\delta}^1 \frac{dx}{\sqrt{x(1-x^2)(k^2-x^2)}} \leq \frac{2\sqrt{\delta}}{\sqrt{(1-\delta)(2-\delta)(k^2-1)}}.
$$ 

Adding the lower bounds for each of the three subintervals $[0,\delta]$, $[\delta, 1-\delta]$ and $[1-\delta , 1]$ yields a certified lower bound on $|f(0)-f(1)|$, and similarly for upper bounds. We use the same method to estimate $|f(1)-f(k)|$.

In order to compute the extremal length $\el(\gamma,X)$, we consider the conformal homeomorphism
$$
g(z)=-i \int_0^z \frac{d\zeta}{\sqrt{(\zeta^2-1)(\zeta^2-k^2)}}
$$
between $\overline{\HH^2}$ and a rectangle $X'$ with marked points at $0=g(0)$, $g(\pm 1)$ and $g(\pm k)$. Then $g\circ f^{-1} : X \to X'$ is a conformal homeomorphism preserving the marked points so that $\el(\gamma,X)=\el(\gamma,X')$. Since the above integrand is even, $g(0)$ subdivides $X'$ into two congruent rectangles. After rescaling $X'$ to have height $2$, the extremal length is given by area. This means that 
$$
\el(\gamma,X)=\el(\gamma,X')=2\frac{|g(1)-g(k)|}{|g(0)-g(1)|}.
$$ We get rigorous bounds on $|g(0)-g(1)|$ and $|g(1)-g(k)|$ with the same method as for $f$. The results are compiled in Table \ref{table1}.

\begin{table}[htp] 
\centering
\begin{tabular}{|c|c|c|c|}
\hline
 & lower bound & upper bound & approximation \\ 
\hline 
$|f(0)-f(1)|$ &  0.500482492919 & 0.500482504323 &  0.500482496721 \\
\hline
$|f(1)-f(k)|$ &  0.400385993317 & 0.400386005494 &  0.400385997377   \\
\hline
$|g(0)-g(1)|$ & 0.300738235179 & 0.300738239980 &  0.30073823678   \\
\hline
$|g(1)-g(k)|$ &  0.581911579444 & 0.581911593793 & 0.581911584228  \\
\hline
$\el(\gamma,X)$ & 3.86988751070 & 3.86988766789 & 3.86988758368  \\
\hline
\end{tabular}
\caption{Certified bounds on the side lengths of $X$ and $X'$. The last column shows the corresponding value calculated with Sage's \texttt{nintegral} routine.} \label{table1}
\end{table}

We now estimate extremal length after stretching $X$. Let the parameters $z_0$, $z_1$, $z_2$, $p$, $z_3$ and $z_4$ be equal to $-3.33297982345$, $-0.26873921366$, $0$, $0.17317940636$,
$1$ and $2.94288195633$ respectively. Then let
$$
\phi(z)=\int_{0}^z \prod_{j=0}^4 (\zeta-z_j)^{-1/2}\, d\zeta,
$$
$$
\psi(z)=-i \int_{0}^z (\zeta-p)^{1/2}\prod_{j=0}^4 (\zeta-z_j)^{-1/2}\, d\zeta,
$$
$Y=\phi(\overline{\HH^2})$ and $Y'=\psi(\overline{\HH^2})$. The polygon $Y$ is meant to be close to a rescaled version of $\geod_t X$ for $t\approx 0.159$ whereas $Y'$ is a stack of two rectangles of nearly the same height, which we use to estimate $\el(\gamma,Y)$.

Since the integrand $\prod_{j=0}^4 |x-z_j|^{-1/2}$ is convex, we may use the trapezoid rule to compute the side lengths of $Y$. There are also elementary estimates near the poles like before. 

For $Y'$ the integrand is convex on each interval of continuity not adjacent to $p$. Indeed,
if $$G(x)= |x-p|^{1/2}\prod_{j=0}^4 |x-z_j|^{-1/2}$$ then 
$$
2(\log G)''(x)=  \sum_{j=0}^4 \frac{1}{(x-z_j)^2} - \frac{1}{(x-p)^2}
$$ 
which is positive when $x<z_2$ and when $x>z_3$. We can thus apply the trapezoid rule with tanh-sinh quadrature to bound the side lengths of $Y'$ not adjacent to $\psi(p)$. The length 
$$
|\psi(z_4)-\psi(z_0)| = |\psi(z_4)-\psi(\infty)| + |\psi(\infty)-\psi(z_0)|
$$
is a little bit different since we need to compute integrals over two half-infinite intervals. We use another doubly exponential quadrature on these intervals given by
$$
x_j = \exp\left(\frac{\pi}{2}\sinh(j \Delta)\right)
$$
for the interval $(0,\infty)$. To estimate the area lost by truncating away from infinity, note that for $x> 2z_4-p$ we have $|x-p|< 2|x-z_4|$ as well as $|x-z_j|\geq |x-z_4|$ for each $j$. It follows that  $G(x) \leq \sqrt{2} |x-z_4|^{-2}$ and hence
$$
\int_a^\infty G(x)\,dx \leq \sqrt{2} |a-z_4|^{-1} 
$$
provided that $a \geq 6$. Similarly, we have
$$
\int_{-\infty}^b G(x)\,dx \leq \sqrt{2} |b-z_0|^{-1} 
$$
provided that $b \leq -7$. 

The polygon $Y'$ is not exactly a stack of two rectangles of the same height, but we can still use it to estimate  $\el(\gamma,Y)= \el(\gamma,Y')$. Using the Euclidean metric on $Y'$ yields the lower bound
$$
\el(\gamma,Y') \geq \frac{\ell(\gamma)^2}{\area(Y')} = \frac{(|\psi(z_0)-\psi(z_1)|+|\psi(z_3)-\psi(z_4)|)^2}{\area(Y')}.
$$ 
Moreover, the sum of the extremal lengths of the horizontal rectangles $R_\alpha$ and $R_\beta$ in $Y'$ is an upper bound for the extremal length:
$$
\el(\gamma,Y') \leq \frac{|\psi(z_0)-\psi(z_1)|}{|\psi(z_1)-\psi(z_2)|}+\frac{|\psi(z_3)-\psi(z_4)|}{|\psi(p)-\psi(z_3)|}.
$$

The last caveat is that $Y$ does not lie exactly along the Teichm\"uller geodesic through $X$. Let $$a=\frac{|f(0)-f(1)|}{|f(1)-f(k)|}-1$$ and $$K=\frac{|\phi(z_0)-\phi(z_1)|}{|\phi(z_3)-\phi(z_4)|}$$ and consider the polygon $Z=P(K,a,Ka,1)$. Then up to rescaling $Z = \geod_t X$ for $t=\frac{1}{2}\log K$. Divide each of $Y$ and $Z$ into three rectangles with sides parallel to the coordinate axes and let $h:Y \to Z$ be the homeomorphism which is affine on each subrectangle. Then $h$ preserves the marked points and
$$
\frac{1}{C}\el(\gamma,Y) \leq \el(\gamma,Z) \leq C \el(\gamma,Y)
$$
where $C\geq \exp(2 d(Y,Z))$ is the dilatation of $h$. Note that $C$ can be expressed in terms of the aspect ratios of the three subrectangles in $Y$ and $Z$. The resulting bounds are shown in Table \ref{table2}.

\begin{table}[htp] 
\centering
\begin{tabular}{|c|c|c|c|}
\hline
 & lower bound & upper bound & approximation \\ 
\hline 
$|\phi(z_0)-\phi(z_1)|$ & 1.036823405576 & 1.036823443983 &  1.03682341838     \\
\hline
$|\phi(z_1)-\phi(z_2)|$ & 0.943128409696 &  0.943128430640 &  0.943128416679      \\
\hline
$|\phi(z_2)-\phi(z_3)|$ & 1.296029251902 & 1.296029284584 &  1.2960292628      \\
\hline
$|\phi(z_3)-\phi(z_4)|$ & 0.754502722746 & 0.754502742641 &  0.754502729379    \\
\hline 
$|\psi(z_0)-\psi(z_1)|$ & 1.068955145751 & 1.068955175385 &  1.06895515563    \\ 
\hline
$|\psi(z_1)-\psi(z_2)|$ & 0.512964353079 & 0.512964364188 &    0.512964356783    \\
\hline
$|\psi(z_3)-\psi(z_4)|$ & 0.908877581965 & 0.908877603159  &  0.908877589032      \\
\hline
$|\psi(z_4)-\psi(z_0)|$ &  1.025928700631 & 1.025928738891 &   1.02592871356 \\
\hline
$\el(\gamma,Y)$ &  3.855692084405 & 3.855692498209 & 3.85569234685  \\
\hline
$\exp(2 d(Y,Z))$ & -- & 1.000000357759 & -- \\ 
\hline
$\el(\gamma,Z)$ &  3.855690704998 & 3.855693877617 &  3.85569234685  \\
\hline
\end{tabular}
\caption{Certified integrals after stretching} \label{table2}
\end{table}

We thus have
$$
\el(\gamma, \geod_t X) = \el(\gamma, Z) < 3.8557 < 3.8698 < \el(\gamma, X),
$$
from which we conclude that $\teich(S_{0,5})$ contains non-convex balls.

\subsection{Remaining cases}

Adding an artificial marked point on the boun\-dary of $X$ between $f(1)$ and $f(k)$ (the right-most side of $X$) does not change the extremal length of $\gamma$ at any time. After doubling, this shows the existence of a non-convex ball in $\teich(S_{0,6})$. 

Recall that there are isometries $\teich(S_{0,5}) \cong \teich(S_{1,2})$ and $\teich(S_{0,6}) \cong \teich(S_{2,0})$ arising from the hyperelliptic involutions on $S_{1,2}$ and $S_{2,0}$. This shows that there exist non-convex balls in those two cases as well.

To treat the torus with $3$ punctures, we can cut horizontal slits of length $s>0$ at two punctures in the double of $X$ then glue the two slits together to form a handle. As $s \to 0$, the extremal length of the double $\what \gamma$ of $\gamma$ on the 3 times punctured torus converges to its value on the double of $X$. The same is true after applying the Teichm\"uller flow $\geod_t$ for any $t$. It follows that if $s$ is small enough, then the resulting geodesic in $\teich(S_{1,3})$ exhibits an increase of extremal length followed by a decrease. This completes the proof of Theorem \ref{thm:mainthm}.

\bibliographystyle{amsalpha}
\providecommand{\bysame}{\leavevmode\hbox to3em{\hrulefill}\thinspace}
\providecommand{\href}[2]{#2}

\end{document}